\newtheorem{theorem}{Theorem}[section]
\newtheorem{lemma}[theorem]{Lemma}
\newtheorem{proposition}[theorem]{Proposition}
\newtheorem{corollary}[theorem]{Corollary}
\theoremstyle{definition}
\newtheorem{definition}[theorem]{Definition}
\newtheorem{example}[theorem]{Example}
\newtheorem{construction}[theorem]{Construction}
\newtheorem{remark}[theorem]{Remark}
\theoremstyle{remark}
\newcommand\CC{{\mathbb C}}
\newcommand\TT{{\mathbb T}}
\newcommand\ZZ{{\mathbb Z}}
\newcommand\QQ{{\mathbb Q}}
\newcommand\PP{{\mathbb P}}
\newcommand\KKK{{\mathcal K}}
\newcommand\RRR{{\mathcal R}}
\newcommand\OOO{{\mathcal O}}
\newcommand\AAA{{\mathcal{A}}}
\newcommand\WDiv{\operatorname{WDiv}}
\newcommand\Eff{{\rm Eff}}
\newcommand\Ample{{\rm Ample}}
\newcommand\SAmple{{\rm SAmple}}
\newcommand\trop{{\rm trop}}
\renewcommand\div{{\rm div}}
\newcommand\Cl{{\rm Cl}}
\newcommand\Pic{{\rm Pic}}
\newcommand\conv{{\rm conv}}
\newcommand\cone{{\rm cone}}
\newcommand\Spec{{\rm Spec}}
\newcommand\id{{\rm id}}
\newcommand\Aut{{\rm Aut }}
\newcommand\codim{{\rm codim}}
\newcommand\lin{{\rm lin}}
\newcommand\im{{\rm im}}
\newcommand\V{{\rm V}}
\newcommand\Chi{{\mathbb X}}
\newcommand\bangle[1]{\langle #1 \rangle}
\begin{document}
\title[On the anticanonical complex]{On the anticanonical complex}

%Subject-Classification
\subjclass[2010]{14L30, 14M25, 14B05, 14J45}

\author[C.~Hische]{Christoff Hische} 
\address{Mathematisches Institut, Universit\"at T\"ubingen,
Auf der Morgenstelle 10, 72076 T\"ubingen, Germany}
\email{hische@math.uni-tuebingen.de}

\author[M.~Wrobel]{Milena Wrobel} 
\address{Institut f\"ur Mathematik,
Universit\"at Oldenburg,
Carl von Ossietzky Stra\ss e 9-11,
26111 Oldenburg,
Germany}
\email{milena.wrobel@uni-oldenburg.de}

\begin{abstract}
The anticanonical complex has been introduced as a natural generalization 
of the toric Fano polytope and so far has been succesfully used 
for the study of varieties with a torus action of complexity one.
In the present article we enlarge the area of application of the
anticanonical complex to varieties with a torus action of higher
complexity, for example, general arrangement varieties.
As an application of our techniques we classify the
three-dimensional canonical Fano intrinsic quadrics with
automorphism group having a maximal torus of dimension one.
\end{abstract}
\maketitle

\section{Introduction}
The main objective of this article is to
provide a combinatorial tool for the study
of singularities of certain subvarieties
of toric varieties.
Our model case are toric Fano varieties.
These are in one-to-one correspondence with 
the so called \emph{Fano polytopes}~$A_X$.
The Fano polytope $A_X$ is determined
by the property that its boundary 
$\partial A_X$ encodes the 
discrepancies of any toric resolution
of singularities:
the discrepancy of an exceptional 
divisor corresponding to a ray $\varrho$ 
equals minus one plus the ratio of the length of the 
shortest nonzero integer vector of $\varrho$
by the length of the unique 
vector in $\varrho \cap \partial A_X$.
This allows to formulate the various 
singularity conditions of the minimal 
model program for $X$ in terms of 
lattice points inside $A_X$ and 
turns the Fano polytope into the central
combinatorial tool for the classification 
of toric Fano varieties~\cite{Ba1981,KrNi2009,Ob2007, Ka2006, Ka2010}.

A natural step beyond the toric case 
is to investigate Fano varieties 
$X$ endowed with a torus action 
$\TT \times X \to X$ of complexity one,
that means that the general $\TT$-orbit 
is of codimension one in $X$.
If $X$ is in addition a Mori dream space it
allows a natural
$\TT$-equivariant embedding $X \subseteq Z_X$ 
into a toric variety $Z_X$, constructed 
via the Cox ring of $X$; see~\cite{HaSu2010}.
In particular, one can associate a
tropical variety $\trop(X)$ with $X$
and a \emph{weakly tropical resolution}
$X' \to X$, where $X'$
is the proper transform of $X \subseteq Z_X$ 
with respect to the toric morphism $Z_X'\rightarrow Z_X$ 
given by the common refinement of the 
fan of $Z_X$ and $\trop(X)$.
The toric Fano polytope is replaced 
with the \emph{anticanonical complex} 
$\mathcal{A}$ of $X$, a polyhedral complex 
supported inside $\trop(X)$; 
see~\cite{BeHaHuNi2016}.
The variety $X$ is log terminal if
and only if $\mathcal{A}$ is bounded. 
Moreover, in the latter case, $\mathcal{A}$
is determined by its property of encoding 
the discrepancies arising from toric ambient resolutions $Z_X''\rightarrow Z_X$ of $X$ 
factoring through $Z_X' \to Z_X$ in 
full analogy to the toric case.
The anticanonical complex has been used
in~\cite{BeHaHuNi2016} to classify $\QQ$-factorial 
terminal Fano threefolds with a $2$-torus 
action and Picard number~1.

We extend this picture to Mori dream spaces $X$ with torus
action $\TT\times X\rightarrow X$ of higher complexity.
Similar as in the complexity one case, the variety $X$
comes equivariantly embedded $X \subseteq Z_X$
into a toric variety and the idea is that an
anticanonical complex for $X$ should be a
polyhedral complex $\mathcal{A} \subseteq \trop(X)$
and encoding discrepancies of $X$ via its boundary
$\partial \mathcal{A}$ as indicated above. 
Here, the central question is whether the ambient
toric resolution provides enough discrepancies.
\goodbreak

Our main result reduces this problem to an
\emph{explicit maximal orbit quotient}, that means
a certain rational morphism of the
$\TT$-action, constructed via a commutative diagram
$$ 
\xymatrix{
X 
\ar[r]
\ar@{-->}[d]_{/\TT}
& 
Z_X 
\ar@{-->}[d]^{/\TT}
\\
Y
\ar[r]
&
Z_Y,
}
$$
where $Z_X \dasharrow Z_Y$ is defined on the union
over all toric orbits of codimension at most
one and there yields a categorical
quotient for the $\TT$-action;
see Construction~\ref{constr:MOQ} for the details.
The necessary property of the quotient space
$Y \subseteq Z_Y$ is
{\em semi-locally toric weakly tropical resolvability},
meaning that the weakly tropical resolution
$Y' \subseteq Z_{Y'}$ is locally toric in
a strong sense, reflecting properties of its
ambient toric variety;
see Definition~\ref{def:locallyToric}.
Our main result is the following.

\begin{theorem}\label{introthm1}
Let $X$ be a $\QQ$-Gorenstein Mori dream space with torus action
having an explicit maximal orbit quotient $X\dashrightarrow Y$,
where $Y$ is complete and admits a semi-locally toric weakly
tropical resolution.
Then $X$ admits an anticanonical complex~$\mathcal{A}$
and the following statements hold:
\begin{enumerate}
\item 
$X$ has at most log terminal singularities if and only if the
 anticanonical complex $\mathcal{A}$  is bounded.
\item
$X$ has at most canonical singularities
if and only if $0$ is the only 
lattice point in the relative interior
of~$\mathcal{A}$.
\item
$X$ has at most terminal singularities
if and only if $0$ and the primitive generators
of the rays of the defining fan of $Z_X$
are the only lattice {points~of~$\mathcal{A}$.}
\end{enumerate}
\end{theorem}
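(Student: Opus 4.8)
The plan is to reduce all three statements to the toric discrepancy dictionary applied to toric ambient modifications of $X\subseteq Z_X$, the role of the hypotheses being to guarantee that such modifications suffice. Note that $\QQ$-Gorensteinness of $X$ makes all the piecewise linear data below well defined over $\QQ$.

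\emph{A good ambient modification.} First I would use the explicit maximal orbit quotient $X\dashrightarrow Y$ of Construction~\ref{constr:MOQ} together with the semi-locally toric weakly tropical resolution $Y'\subseteq Z_{Y'}$ of $Y$ provided by Definition~\ref{def:locallyToric}. The rational map $Z_X\dashrightarrow Z_Y$ is toric on the union of orbits of codimension at most one, and its ``kernel directions'' span $\trop(X)$; pulling the fan of $Z_{Y'}$ back along it and refining it by $\trop(X)$ produces a toric morphism $Z_X''\to Z_X$ that factors through the weakly tropical resolution $Z_X'\to Z_X$. The semi-local toricity of $Y'$ then passes to the proper transform $X''\subseteq Z_X''$: locally over $Y'$, the variety $X''$ is a product of a $Y'$-chart with the toric pieces coming from the torus directions, so $X''$ is locally isomorphic to an open subset of an affine toric variety along the exceptional locus, and $X''$ meets the orbits of $Z_X''$ in the expected codimension.

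\emph{Adjunction and sufficiency.} The expected intersection behaviour gives the adjunction identity $K_{X''}=(K_{Z_X''}+X'')|_{X''}$, whence the discrepancy over $X$ of the divisor cut on $X''$ by a toric divisor $D_\varrho$ with $\varrho\subseteq\trop(X)$ equals $\psi(v_\varrho)-1$. Here $\psi$ is the piecewise linear function on $\trop(X)$, with respect to the fan structure induced by $Z_X'$, that encodes the anticanonical class of $X$ in the ambient toric coordinates; in particular $\psi$ has value $1$ on the primitive generators of the rays of $Z_X$ contained in $\trop(X)$. Since $X''$ is toroidal along the exceptional locus, every resolution of $X''$ is realized by a further toric subdivision of the same type, so that \emph{every} divisorial discrepancy over $X$ is already seen on an admissible toric ambient modification. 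I expect this sufficiency statement to be the main obstacle: it is precisely the answer to ``does the ambient toric resolution provide enough discrepancies'', and the place where the semi-locally toric hypothesis is genuinely used.

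\emph{The anticanonical complex and the equivalences.} Set $\mathcal A:=\{v\in\trop(X):\psi(v)\le 1\}$; cell-wise, the part of $\mathcal A$ in a cone $\delta$ of the induced fan is the sublevel polytope $\{v\in\delta:\langle u_\delta,v\rangle\le1\}$ for the linear form $u_\delta$ representing the anticanonical class on the $\delta$-chart of $Z_X'$. Since the $u_\delta$ glue to the single function $\psi$, refining the ambient modification only subdivides the cells of $\mathcal A$ and leaves $\mathcal A$ itself unchanged, so $\mathcal A$ is well defined; that $0\in\mathcal A$ and that $\mathcal A$ is a polyhedral complex are immediate. Now each equivalence follows from the discrepancy formula $a(D_\varrho)=\psi(v_\varrho)-1$ combined with the sufficiency from the previous step. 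For (i), $\mathcal A=\{\psi\le1\}$ is bounded iff $\psi>0$ on $\trop(X)\setminus\{0\}$, i.e. all exceptional discrepancies exceed $-1$, which is log terminality; an unbounded $\mathcal A$ directly exhibits an exceptional divisor of discrepancy $\le-1$. For (ii), a nonzero lattice point $v\in\trop(X)$ lies in the relative interior of $\mathcal A$ iff $\psi(v)<1$ iff blowing up the ray through $v$ gives an exceptional divisor of negative discrepancy over $X$ (the rays of $Z_X$ satisfy $\psi(v_\varrho)=1$ and hence lie on $\partial\mathcal A$, which is why only exceptional divisors are tested), so ``$0$ is the only lattice point in the relative interior of $\mathcal A$'' is canonicity. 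For (iii), $v\in\mathcal A$ with $v\notin\{0\}\cup\{v_\varrho\ :\ \varrho \text{ a ray of } Z_X\}$ likewise corresponds to an exceptional divisor of discrepancy $\le0$, giving terminality. Throughout one must carefully separate the rays of $Z_X$, which yield honest divisors on $X$, from the new rays inside $\trop(X)$, which yield the exceptional divisors controlled by $\partial\mathcal A$.
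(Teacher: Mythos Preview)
Your proposal follows essentially the same route as the paper: lift the semi-locally toric weakly tropical resolution from $Y$ to $X$ along the explicit maximal orbit quotient, conclude that toric ambient modifications compute all relevant discrepancies, package these into a polyhedral complex, and then read off the three singularity types from lattice points. The paper organises this as Corollary~\ref{cor:introCor5.7} (which reduces to Theorem~\ref{thm:quotKrit}), Corollary~\ref{cor:423}/Proposition~\ref{prop:AKKForLocallyToric}, and finally Remark~\ref{rem:charAKK}; your three paragraphs correspond to exactly these steps.

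There is one point where your argument, as written, does not go through. You invoke the adjunction identity $K_{X''}=(K_{Z_{X}''}+X'')|_{X''}$, but this is only valid when $X''$ is a (Cartier) hypersurface in $Z_{X}''$. For a general Mori dream space the Cox-ring embedding has higher codimension, and even in the complete-intersection case the correction term is $\sum_i[g_i]$, not $[X'']$. The paper avoids this altogether: rather than adjunction, it uses the neatness of $X\subseteq Z$ (so that $\Cl(Z)\cong\Cl(X)$, see Remark~\ref{rem:descripZ}) together with $\QQ$-Gorensteinness to produce a $T$-invariant $\QQ$-Cartier divisor $D$ on $Z$ with $D|_X$ canonical, and then formalises the local behaviour via \emph{toric canonical $\varphi$-families} (Definition~\ref{def:toricCanonPhiFamily}, Lemma~\ref{lem:LocallytToricCanPsiFamily}). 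This is what actually yields the discrepancy formula $\mathrm{discr}_X(D^{\varrho}_{X'})=-1-\langle u_{\sigma'},v_\varrho\rangle$ of Remark~\ref{rem:Aindepoffam}, which is your $\psi(v_\varrho)-1$. So your target formula is correct, but the justification you give for it is not; replacing the hypersurface adjunction by the $\varphi$-family argument (or, equivalently, by the observation that some $T$-invariant representative of $K_X$ locally agrees with $k_{Z'}$) closes the gap.
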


Observe that in this theorem we do not require $X$
to be a Fano variety. In fact, the Fano property
reflects in certain convexity properties
of the anticanonical complex, see Corollary~\ref{cor:fanoConvex}
and Example~\ref{ex:RAP5}.
The main step in the proof of this theorem is to
show that if $Y$ admits a semi-locally toric
weakly tropical resolution, then also $X$ does so.
Note that our resolution of singularities via
the tropical variety is a special case of
Tevelev's strategy~\cite{Te2007}, which is
non-constructive but works in full generality.

As a sample class, we consider in
Sections~\ref{section:generalArrangementVarieties} 
and~\ref{section:structuralResultsForGeneralArrangementVarieties} the \emph{general arrangement varieties},
introduced in \cite{HaHiWr2019}.
These varieties come with a torus action of
arbitrary complexity $c$
having an explicit maximal orbit quotient $X\dashrightarrow \PP_c$ and
the critical values of the quotient map
form a general hyperplane arrangement.
Examples are the Mori dream spaces with torus 
action of complexity one. There, the maximal 
orbit quotient is a projective line and the 
set of critical values 
is a point configuration on this line.

\begin{corollary}\label{cor:genArrA}
Every $\QQ$-Gorenstein general arrangement variety 
admits an anticanonical complex.
\end{corollary}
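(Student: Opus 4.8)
The plan is to obtain the statement as a direct application of Theorem~\ref{introthm1}: given a $\QQ$-Gorenstein general arrangement variety $X$, I would simply verify that its defining data meet the hypotheses of that theorem. First I would recall from~\cite{HaHiWr2019} the structural facts about general arrangement varieties that do the work here: such an $X$ is a Mori dream space, it carries a torus action $\TT \times X \to X$ of some complexity $c$, it comes with a $\TT$-equivariant closed embedding $X \subseteq Z_X$ into a toric variety, and — this is the feature that distinguishes general arrangement varieties — it admits an explicit maximal orbit quotient $X \dashrightarrow Y$ in the precise sense of Construction~\ref{constr:MOQ} with quotient space $Y = \PP_c$, the critical values of the quotient map forming a general hyperplane arrangement on $\PP_c$. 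Since $\PP_c$ is complete, only one hypothesis of Theorem~\ref{introthm1} remains to be checked.

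That remaining point is that $Y = \PP_c$ admits a semi-locally toric weakly tropical resolution in the sense of Definition~\ref{def:locallyToric}, and here I would exploit that $\PP_c$ is itself a smooth complete toric variety. Then the natural toric ambient variety of $Y$ is $Z_Y = \PP_c$ with $Y \subseteq Z_Y$ the identity, the tropical variety $\trop(Y)$ has support all of $\QQ^c$, and therefore the common refinement of the defining fan of $Z_Y$ with $\trop(Y)$ is again the fan of $Z_Y$. Hence the weakly tropical resolution $Y' \to Y$ is the identity, its ambient toric variety $Z_{Y'}$ equals $\PP_c = Y'$, and so $Y' \subseteq Z_{Y'}$ is trivially locally toric in the strong sense demanded by Definition~\ref{def:locallyToric}. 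With all hypotheses in place, Theorem~\ref{introthm1} applied to $X \dashrightarrow \PP_c$ yields an anticanonical complex $\AAA$ for $X$.

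I do not expect a genuine obstacle: the mathematical content is entirely contained in Theorem~\ref{introthm1} and in the structure theory of~\cite{HaHiWr2019}, so the proof is a matter of bookkeeping. The only points that require care are conventional ones — confirming that the quotient map $X \dashrightarrow \PP_c$ of a general arrangement variety is \emph{explicit} in exactly the sense of Construction~\ref{constr:MOQ} (in particular that $Z_X \dasharrow Z_Y$ is a categorical quotient over all toric orbits of codimension at most one), and verifying that the trivial resolution of the smooth toric variety $\PP_c$ satisfies every clause of Definition~\ref{def:locallyToric}. Both are routine, and the degenerate case $c = 0$, where $X$ is toric and $\PP_0$ is a point, is covered without change.
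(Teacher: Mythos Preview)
There is a genuine gap in your argument, located in your treatment of the quotient space $Y$. You write that ``the natural toric ambient variety of $Y$ is $Z_Y = \PP_c$ with $Y \subseteq Z_Y$ the identity,'' and then deduce that the weakly tropical resolution is trivial. But the notion of \emph{explicit} maximal orbit quotient in Construction~\ref{constr:MOQ} does not allow you to choose the ambient toric variety of $Y$ freely: it is determined by projecting the rays of $\Sigma$ along $P_1 \colon \ZZ^{r+s} \to \ZZ^r$. For a general arrangement variety this projection lands in $\ZZ^r$, and the resulting fan $\Delta_0$ consists of rays of the fan of $\PP_r$, not of $\PP_c$. Completing to $\Delta = \Sigma_{\PP_r}$ one obtains $Y = \PP_c$ sitting as a \emph{linear subspace} of $Z_\Delta = \PP_r$; see Construction~\ref{constr:genArrY} and the paragraph following it. This embedding is the one whose weakly tropical resolution must be shown to be semi-locally toric, and here $\trop(Y)$ is the $c$-skeleton $\Sigma_{\PP_r}^{\le c}$ inside $\QQ^r$, not all of $\QQ^c$.

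The paper handles exactly this point in Lemma~\ref{lem:WeakToricGenArr}: one checks by hand, using the linear equations $h_t$ defining $\PP_c \cap \TT^r$, that for each maximal cone $\delta$ of $\Sigma_{\PP_r}^{\le c}$ the projection $\pi_\delta$ maps $Y_\delta$ isomorphically onto an open subset of $U(\delta)$. This is an honest (if short) computation, not a triviality. Once it is done, the remainder of your plan is correct and matches the paper's proof of Theorem~\ref{thm:weakTropicalLocallyToric}: one verifies the hypotheses of Theorem~\ref{thm:quotKrit} (the technical backbone of Theorem~\ref{introthm1}) and concludes. So your overall strategy is the right one; what is missing is the recognition that the semi-locally toric condition concerns the specific embedding $\PP_c \subseteq \PP_r$ forced by the construction, not the abstract smooth toric variety $\PP_c$.
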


In Section \ref{section:structuralResultsForGeneralArrangementVarieties} we explicitly describe the structure of the 
anticanonical complex for an arbitrary general arrangement variety $X$. 
As our first application we use this description to obtain bounds 
on the defining data of $X$ due to its singularity type.
Here is how this runs:
The Cox ring of $X$ is a complete intersection ring of the form
$$
\RRR(X) = \CC[T_{ij}, S_k]/ \bangle{g_1, \ldots, g_{r-c}}
$$ 
with a polynomial ring in the variables $T_{ij}$,
where $0 \le i \le r$, $1 \le j \le n_i$, and
$S_k$, where $1 \le k \le m$.
Each relation $g_t$ has $c+2$ terms, where $c$ 
is the complexity of the torus action and 
is of the form
$$
g_t 
\ = \
a_{t,0}T_0^{l_0} + 
\ldots +
a_{t,c}T_{c}^{l_{c}}
+
a_{t,c+t}T_{c+t}^{l_{c+t}},
\qquad
T_i^{l_i} 
\ = \ 
T_{i1}^{l_{i1}}\cdots T_{in_i}^{l_{in_i}},
$$ 
with $a_{t,j} \in \CC^*$. 
The variables $T_{ij}$ and $S_k$ are in correspondence 
with the rays $\varrho_{ij}$ and $\varrho_k$
of the fan $\Sigma$ of the ambient toric variety $Z_X$ 
of $X$.
For a ray $\varrho \in \Sigma$ we denote by $l_\varrho$ 
the exponent of the corresponding variable $T_\varrho$
in the above relations. Now the singularity type of $X$ provides bounds 
on the exponents $l_\varrho$ as follows.

\goodbreak

\begin{theorem}
\label{thm:3}
Let $X\subseteq Z_X$ be a $\QQ$-Gorenstein general arrangement variety of complexity $c$ 
and consider a cone $\sigma =\varrho_{0j_0} + \ldots + \varrho_{rj_r}\in\Sigma$. 
If the singularity defined by $\sigma$ is 
\begin{enumerate}
\item 
log terminal, then 
$\sum_{\varrho \in \sigma^{(1)}}{l_{\varrho}^{-1}} > r-c$ holds,
\vspace{2pt}
\item 
canonical, then 
$\sum_{\varrho \in \sigma^{(1)}}{l_{\varrho}^{-1}} \geq 
r-c + c_{\sigma}\prod_{\varrho \in \sigma^{(1)}}{l_{\varrho}^{-1}}$ holds,
\vspace{2pt}
\item 
terminal, then
$\sum_{\varrho \in \sigma^{(1)}}{l_{\varrho}^{-1}} > 
 r-c + c_{\sigma}\prod_{\varrho \in \sigma^{(1)}}{l_{\varrho}^{-1}}$ holds,
\end{enumerate}
where $c_\sigma$ is the greatest common divisor of the entries of the vector $v_\sigma$ built up from
the primitive generators $v_{ij_i}\in\varrho_{ij_i}$ as follows:
$$ 
v_\sigma
\ := \ 
\ell_{\sigma,0} v_{0j_0} + \ldots +  \ell_{\sigma,r} v_{rj_r}
\ \in \ 
\ZZ^{r+s},\qquad
\ell_{\sigma,i} 
\ := \ 
\frac{l_{0j_0} \cdots l_{rj_r}}{l_{ij_i}}\ \in\ \ZZ.
$$
\end{theorem}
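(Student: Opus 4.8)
The plan is to handle each of the three implications by contraposition and, for the given cone $\sigma=\varrho_{0j_0}+\dots+\varrho_{rj_r}$, to control a single distinguished exceptional divisor over $X$ using the explicit description of the anticanonical complex $\mathcal{A}$ from Section~\ref{section:structuralResultsForGeneralArrangementVarieties}. That divisor sits over the ray $\varrho_\sigma:=\QQ_{\ge0}v_\sigma$. First I would record, from the structural description of $\trop(X)$ and of the weakly tropical resolution $Z_X'\to Z_X$ (existence via Corollary~\ref{cor:genArrA}), that $\varrho_\sigma$ lies in the relative interior of $\sigma$, is a ray of $\trop(X)$, and is subdivided by $Z_X'\to Z_X$; write $E_\sigma$ for the associated exceptional prime divisor on the proper transform $X'\subseteq Z_X'$, and $p_\sigma:=v_\sigma/c_\sigma$ for the primitive generator of $\varrho_\sigma$. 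Since the center of $E_\sigma$ lies in the chart $X\cap Z_\sigma$, the singularity of $X$ along $\sigma$ governs the discrepancy $a(E_\sigma;X)$. Setting $m_\sigma:=1+a(E_\sigma;X)$, the shape of $\mathcal{A}$ recorded in Section~\ref{section:structuralResultsForGeneralArrangementVarieties} gives that $\partial\mathcal{A}$ meets $\varrho_\sigma$ in the point $p_\sigma/m_\sigma$ when $m_\sigma>0$, and that $\mathcal{A}\cap\varrho_\sigma=\varrho_\sigma$ when $m_\sigma\le0$.

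The arithmetic core is the identity
\[
m_\sigma\ =\ \frac{\left(\sum_{i=0}^{r}\ell_{\sigma,i}\right)-(r-c)L}{c_\sigma},\qquad L:=l_{0j_0}\cdots l_{rj_r}.
\]
Here the exponents $\ell_{\sigma,i}=L/l_{ij_i}$ are tailored so that every monomial $T_i^{l_i}$ occurring in the relations has the same $v_\sigma$-weight $l_{ij_i}\cdot\ell_{\sigma,i}=L$, hence the same $p_\sigma$-weight $L/c_\sigma$; thus each $g_t$ has $p_\sigma$-weight $L/c_\sigma$, and $E_\sigma$ is the divisor of the weighted blow-up with weights $p_\sigma$ of the complete intersection $X\cap Z_\sigma=V(g_1,\dots,g_{r-c})$ in the chart $Z_\sigma$, whose $r+1$ toric boundary divisors correspond to the variables $T_{ij_i}$. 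The discrepancy formula for weighted blow-ups together with adjunction for complete intersections gives $a(E_\sigma;X)=\sum_i(\ell_{\sigma,i}/c_\sigma)-1-(r-c)(L/c_\sigma)$, which is the displayed value of $m_\sigma$; for a singular cone $\sigma$ one obtains the same number from a toric ambient resolution, or simply reads it off $\mathcal{A}$.

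Now I would conclude via Theorem~\ref{introthm1}. If the singularity along $\sigma$ is log terminal, then $\mathcal{A}$ is bounded, so $\mathcal{A}\cap\varrho_\sigma\ne\varrho_\sigma$ and hence $m_\sigma>0$; dividing $\sum_i\ell_{\sigma,i}>(r-c)L$ by $L$ and using $\ell_{\sigma,i}/L=l_{ij_i}^{-1}$ yields $\sum_{\varrho\in\sigma^{(1)}}l_\varrho^{-1}>r-c$. If it is canonical, then $0$ is the only lattice point in $\relint\,\mathcal{A}$; the nonzero lattice point $p_\sigma$ lies on $\varrho_\sigma\subseteq\relint\,\sigma$, and it lies in $\relint\,\mathcal{A}$ precisely when $0<m_\sigma<1$, so canonicity forces $m_\sigma\le0$ or $m_\sigma\ge1$; the former is ruled out, being incompatible with log terminality, so $m_\sigma\ge1$, and dividing $\sum_i\ell_{\sigma,i}\ge(r-c)L+c_\sigma$ by $L$, with $c_\sigma/L=c_\sigma\prod_{\varrho\in\sigma^{(1)}}l_\varrho^{-1}$, gives the stated inequality. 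If it is terminal, then the only lattice points of $\mathcal{A}$ are $0$ and the primitive generators of the rays of $\Sigma$; since $\varrho_\sigma$ is not a ray of $\Sigma$, the lattice point $p_\sigma$ is not among them, so $p_\sigma\notin\mathcal{A}$ and therefore $m_\sigma>1$ (again ruling out $m_\sigma\le0$), i.e.\ the strict inequality.

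The main obstacle is the first paragraph: verifying from the structure theory of Section~\ref{section:structuralResultsForGeneralArrangementVarieties} that $\varrho_\sigma=\QQ_{\ge0}v_\sigma$ is indeed a ray of $\trop(X)$ lying in the relative interior of $\sigma$ which the weakly tropical resolution refines, so that $E_\sigma$ is genuinely available, and that $\partial\mathcal{A}$ meets $\varrho_\sigma$ exactly in $p_\sigma/m_\sigma$ with $m_\sigma=1+a(E_\sigma;X)$ --- in particular that a point of $\varrho_\sigma$ strictly between $0$ and $p_\sigma/m_\sigma$ belongs to $\relint\,\mathcal{A}$, which uses that $\varrho_\sigma$ is a common face of several maximal cells of $\mathcal{A}$. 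Granting the local shape of $\mathcal{A}$ over $\sigma$, the weight-balancing identity for $v_\sigma$, the discrepancy computation, and the elementary rewriting of the three inequalities are routine, and only require the one-directional implications of Theorem~\ref{introthm1}.
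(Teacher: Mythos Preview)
Your approach is essentially the paper's own: recognize that $\sigma$ is a $P$-elementary big cone, compute the discrepancy of the exceptional divisor $E_\sigma$ over the ray $\varrho_\sigma$ as $c_\sigma^{-1}\ell_\sigma-1$ (your $m_\sigma-1$), and read off the three inequalities. The paper packages steps one and two into Proposition~\ref{prop:disc}, whose proof produces the discrepancy directly via a toric canonical $\varphi$-family rather than via a weighted blow-up plus adjunction; your structural ``main obstacle'' (that $\varrho_\sigma$ really is a new ray of $\Sigma'$ in the lineality space, lying in $\sigma^\circ$) is exactly what Proposition~\ref{prop:raysWeaklyTrop} and Lemma~\ref{lem:cite} establish. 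One minor simplification: for the canonical and terminal cases you route through the lattice-point characterization of $\mathcal{A}$ from Theorem~\ref{introthm1} and then have to argue about the relative interior, whereas it is shorter to use the discrepancy value directly (canonical forces $a(E_\sigma;X)\ge0$, terminal forces $a(E_\sigma;X)>0$), which is how the paper's one-line proof proceeds.
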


As a direct consequence we obtain
the following characterization of
log terminality 
in the complexity two case, purely relying on 
the occurring exponents $l_{ij_i}$.

\begin{corollary}\label{introcor2}
Let $X \subseteq Z_X$ be a $\QQ$-Gorenstein general arrangement variety 
of complexity two.
Then $X$ is log terminal if and only if for any cone 
${\sigma = \varrho_{0 j_0} + \ldots + \varrho_{r j_r} \in \Sigma}$
we achieve by  suitably renumbering 
the involved rays that
${l_{4j_4} = \ldots =l_{rj_r}= 1}$ 
holds and the tuple
$(l_{0j_0},l_{1j_1},l_{2j_2},l_{3j_3})$
is one of the following:
\begin{enumerate}
\item
$(1,x,y,z)$,
\item
$(2,2,x,y)$,
\item
$(2,3,\leq 5,x)$, $(2,3,7,\leq 41)$, $(2,3,8, \leq 23)$, 
$(2,3,9,\leq 17)$,  $(2,3,10, \leq 14)$, 
$(2,3,11 \leq 13)$, 
\item
$(2,4,4,x)$, 
$(2,4,5,\leq 19)$, 
$(2,4,6,\leq 11)$, 
$(2,4,7,\leq 8)$,
\item
$(2,5,5,\leq 9)$, 
$(2,5,6,\leq 6)$,
\item
$(3,3,3,x)$, 
$(3,3,4,\leq 11)$, 
$(3,3,5,\leq 6)$, 
\item
$(3,4,4,5)$, 
$(3,4,4,4)$.
\end{enumerate}
\end{corollary}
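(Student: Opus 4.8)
The plan is to reduce the statement, via Theorem~\ref{introthm1} and the explicit description of the anticanonical complex obtained in Section~\ref{section:structuralResultsForGeneralArrangementVarieties}, to an explicit Diophantine condition on the exponents $l_{ij}$, and then to run the resulting finite classification.

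\textbf{Step 1: from geometry to a quadruple condition.} By Theorem~\ref{introthm1}, $X$ is log terminal if and only if the anticanonical complex $\mathcal{A}$ is bounded, and $\mathcal{A}$ is assembled from cells supported on the cones of $\Sigma$; so it suffices to control the cells over the cones $\sigma = \varrho_{0j_0} + \ldots + \varrho_{rj_r}$. For $c = 2$ the structural description shows that boundedness of the cell over $\sigma$ depends only on the rays $\varrho \in \sigma^{(1)}$ with $l_\varrho > 1$, and that at most $c + 2 = 4$ of the exponents $l_{0j_0}, \ldots, l_{rj_r}$ can be $> 1$ without forcing $\mathcal{A}$ to be unbounded; renumbering so that $l_{4j_4} = \ldots = l_{rj_r} = 1$, log terminality at $\sigma$ becomes a condition on the quadruple $(l_{0j_0}, l_{1j_1}, l_{2j_2}, l_{3j_3})$ alone. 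By Theorem~\ref{thm:3}(i) this condition implies $\sum_{i=0}^{3} l_{ij_i}^{-1} > 1$, and the description of $\mathcal{A}$ pins it down exactly -- sharpening the range of the largest exponent and, in the borderline cases, excluding the parabolic quadruple $(2,3,6,\ast)$.

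\textbf{Step 2: the finite classification.} Granted the criterion of Step~1, sort the quadruple so that $l_0 \le l_1 \le l_2 \le l_3$. If $l_0 \ge 4$ then $\sum l_i^{-1} \le 1$, which is impossible, so $l_0 \in \{1,2,3\}$. The case $l_0 = 1$ is unconditional and yields family~(i). For $l_0 = 2$ one has $l_1 \le 6$ with $l_1 = 6$ impossible; $l_1 = 2$ is unconditional (family~(ii)); for $l_1 \in \{3,4,5\}$ the inequality $\sum l_i^{-1} > 1$ together with the sharpened bound of Step~1 confines $l_2$ to a short finite list and then bounds $l_3$, giving families~(iii)--(v). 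For $l_0 = 3$ one has $l_1 \in \{3,4\}$, and a short case split on $l_2$ (and then $l_3$) yields families~(vi)--(vii). Conversely, each listed quadruple is verified against the criterion directly. Running this over all cones $\sigma$ of $\Sigma$ gives the asserted equivalence.

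\textbf{The main obstacle} is Step~1: extracting from the structure theorem of Section~\ref{section:structuralResultsForGeneralArrangementVarieties} the \emph{exact} log terminality criterion for the quadruple -- it is strictly stronger than the coarse inequality $\sum_{\varrho \in \sigma^{(1)}} l_\varrho^{-1} > r - c$ of Theorem~\ref{thm:3}(i), and getting the borderline cases right (e.g.\ why $(2,3,6,\ast)$ is excluded while $(2,4,4,\ast)$ and $(3,3,3,\ast)$ survive) is the delicate point. Everything after Step~1 is bookkeeping.
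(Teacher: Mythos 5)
Your overall architecture (reduce to a numerical condition on the exponents via the anticanonical complex, then enumerate) is the same as the paper's, but your Step~1 contains the decisive gap: you assert that the exact log terminality criterion for the quadruple is \emph{strictly stronger} than $\sum_{i=0}^{3} l_{ij_i}^{-1}>1$ and that this sharpened criterion ``excludes the parabolic quadruple $(2,3,6,\ast)$'' --- but you never state what this criterion is, let alone derive it, and you yourself flag it as the unresolved ``main obstacle.'' A proof cannot defer its central step. Worse, the sharper criterion you posit does not exist within the paper's framework: by Proposition~\ref{prop:disc} the discrepancy along $\varrho_\sigma$ equals $c_\sigma^{-1}\ell_\sigma-1$, so the ray $\varrho_\sigma$ lies outside $\mathcal{A}$ (equivalently, the cell over $\sigma$ is bounded) \emph{exactly} when $\ell_\sigma>0$, i.e.\ exactly when $\sum_{\varrho\in\sigma^{(1)}}l_\varrho^{-1}>r-c$; together with Corollary~\ref{cor:AOfGArr} and Remark~\ref{rem:charAKK}~(i') this is an equivalence, not merely the necessary condition of Theorem~\ref{thm:3}~(i). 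Under that criterion $(2,3,6,x)$ gives $\sum l_i^{-1}=1+\tfrac1x>1$ and is therefore log terminal, so the exclusion you are trying to justify cannot be obtained from the structure of $\mathcal{A}$; the same applies to your implicit sharpening of the bounds in the entries $(2,4,7,\cdot)$, $(2,5,6,\cdot)$ and $(3,3,5,\cdot)$, where the inequality $\sum l_i^{-1}>1$ alone yields $\le 9$, $\le 7$ and $\le 7$ respectively.

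The paper's own proof is the short route you were trying to avoid: it combines Theorem~\ref{thm:3} with Remark~\ref{rem:expLogTerm} (sort the exponents decreasingly; then the condition $\sum l_\varrho^{-1}>r-c$ is claimed to force $l_{\varrho_4}=\cdots=l_{\varrho_r}=1$ and $\sum_{i=0}^{3}l_{\varrho_i}^{-1}>1$) and finishes by ``direct calculation,'' i.e.\ by enumerating the quadruples with reciprocal sum exceeding $1$ --- no auxiliary sharpening is invoked anywhere. Your Step~2 bookkeeping is essentially this enumeration and is fine as far as it goes, but note also that your claim ``at most $c+2=4$ of the exponents can be $>1$'' is taken on faith from Remark~\ref{rem:expLogTerm} rather than proved: the elementary estimate only gives that at most $2c+1=5$ exponents can exceed $1$ (e.g.\ five exponents equal to $2$ satisfy $\sum l_i^{-1}=\tfrac52>2=r-c$ when $r=4$), so this reduction too requires an argument you do not supply. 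In short, the proposal is not a proof: its pivotal criterion is missing, and the form in which you anticipate it is incompatible with the discrepancy formula of Proposition~\ref{prop:disc} on which any such criterion must rest.
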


Finally, we take a look at intrinsic 
quadrics.
These are varieties with a Cox ring
defined by a single quadratic relation,
introduced in~\cite{BeHa2007}; see also
\cite{Bo2011} for further work in
these varieties.
In~\cite{FaHa2017}, smooth Fano intrinsic
quadrics of low Picard number have been
studied.
We look at singular three-dimensional ones
coming with a torus action of true complexity
two, meaning that the maximal torus of  $\Aut(X)$
is of dimension one.

\begin{theorem}\label{thm:quadrics}
Every three-dimensional $\QQ$-factorial Fano intrinsic quadric having 
at most canonical singularities and automorphism group with
one-dimensional maximal torus is 
isomorphic to precisely one of the varieties $X$,
specified by its $\Cl(X)$-graded Cox ring $\RRR(X)$,
a matrix $[w_1,\ldots,w_r]$ of generator degrees and
an ample class $u\in\Cl(X)$ as follows:

{\small 
\renewcommand*{\arraystretch}{1.1}
\begin{longtable}{c|c|c|c|c}
No.&
$\RRR(X)$&
$\Cl(X)$&
$\left[w_1,\ldots,w_r\right]$&
$u$
\\
\hline
1&
{\tiny
$
\begin{array}{c}
     \CC[T_1,\ldots,T_4,S_1]
     \\
     \hline
     \bangle{T_1^2+T_2^2+T_3^2+T_4^2}
\end{array}$
}
&
$\ZZ\times\ZZ_2\times\ZZ_2\times\ZZ_2$&
{\tiny
$\left[\begin{array}{ccccc}
     1&1&1&1&2\\
    \bar 1&\bar 1&\bar 1&\bar 0&\bar 1\\
     \bar 0&\bar 0&\bar 1&\bar 0&\bar 1\\
     \bar 1&\bar 0&\bar 0&\bar 0&\bar 1
\end{array}\right]$
}&
{\tiny
$
\left[
\begin{array}{c}
4\\
\bar 0\\
\bar 0\\
\bar 0
\end{array}
\right]
$
}
\\
\hline
2&
{\tiny
$
\begin{array}{c}
     \CC[T_1,\ldots,T_5]
     \\
     \hline
     \bangle{T_1T_2+T_3^2+T_4^2+T_5^2}
\end{array}$
}
&
$\ZZ\times\ZZ_2\times\ZZ_2$&
{\tiny
$\left[\begin{array}{ccccc}
     1&1&1&1&1  
     \\
     \bar 0&\bar 0&\bar 1&\bar 1&\bar 0
     \\
     \bar 1&\bar 1&\bar 0&\bar 1&\bar 0
\end{array}\right]$
}&
{\tiny
$
\left[
\begin{array}{c}
3
\\
\bar 0
\\
\bar 1
\end{array}
\right]
$
}
\\
\hline
3&
{\tiny
$
\begin{array}{c}
     \CC[T_1,\ldots,T_5]
     \\
     \hline
     \bangle{T_1T_2+T_3^2+T_4^2+T_5^2}
\end{array}$
}
&
$\ZZ\times\ZZ_2\times\ZZ_2$&
{\tiny
$\left[\begin{array}{ccccc}
     1&3&2&2&2  
     \\
     \bar 1&\bar 1&\bar 1&\bar 1&\bar 0
     \\
     \bar 0 &\bar 0&\bar 1&\bar 0&\bar 1
\end{array}\right]$
}&
{\tiny
$
\left[
\begin{array}{c}
6
\\
\bar0
\\
\bar0
\end{array}
\right]
$
}
\\
\hline
4&
{\tiny
$
\begin{array}{c}
     \CC[T_1,\ldots,T_5]
     \\
     \hline
     \bangle{T_1T_2+T_3^2+T_4^2+T_5^2}
\end{array}$
}
&
$\ZZ\times\ZZ_2\times\ZZ_6$&
{\tiny
$\left[\begin{array}{ccccc}
     1&1&1&1&1  
     \\
     \bar 1&\bar 1&\bar 1&\bar 0&\bar 0
     \\
     \bar 2&\bar 4&\bar 3&\bar 3&\bar 0
\end{array}\right]$
}&
{\tiny
$
\left[
\begin{array}{c}
3
\\
1
\\
0
\end{array}
\right]
$
}
\\
\hline
5&
{\tiny
$
\begin{array}{c}
     \CC[T_1,\ldots,T_4,S_1,S_2]
     \\
     \hline
     \bangle{T_1^2+T_2^2+T_3^2+T_4^2}
\end{array}$
}
&
$\ZZ^2\times\ZZ_2\times\ZZ_2\times\ZZ_2$&
{\tiny
$\left[\begin{array}{cccccc}
    1&1&1&1&0&0\\
    0&0&0&0&1&1\\
    \bar 1&\bar 1&\bar 1&\bar 0&\bar 1&\bar 0\\
    \bar 0&\bar 0&\bar 1&\bar 0&\bar 1&\bar 0\\
    \bar 1&\bar 0&\bar 0&\bar 0&\bar 1&\bar 0
\end{array}\right]$
}&
{\tiny
$
\left[
\begin{array}{c}
2
\\
2
\\
\bar 0
\\
\bar 0
\\
\bar 0
\end{array}
\right]
$
}
\\
\hline
6&
{\tiny
$
\begin{array}{c}
     \CC[T_1,\ldots,T_5,S_1]
     \\
     \hline
     \bangle{T_1T_2+T_3^2+T_4^2+T_5^2}
\end{array}$
}
&
$\ZZ^2\times\ZZ_2\times\ZZ_2$&
{\tiny
$\left[\begin{array}{cccccc}
     1&1&1&1&1&0\\
     -1&1&0&0&0&1\\
     \bar 0&\bar 0&\bar 1&\bar 1&\bar 0&\bar 0\\
     \bar 1&\bar 1&\bar 0&\bar 1&\bar 0&\bar 0
\end{array}\right]$
}&
{\tiny
$
\left[
\begin{array}{c}
3
\\
1
\\
\bar 0
\\
\bar 1
\end{array}
\right]
$
}
\\
\hline
7&
{\tiny
$
\begin{array}{c}
     \CC[T_1,\ldots,T_5,S_1]
     \\
     \hline
     \bangle{T_1T_2+T_3^2+T_4^2+T_5^2}
\end{array}$
}
&
$\ZZ^2\times\ZZ_2\times\ZZ_2$&
{\tiny
$\left[\begin{array}{cccccc}
    -1&1&0&0&0&1\\
    2&0&1&1&1&1\\
    \bar 0&\bar 0&\bar 1&\bar 1&\bar 0&\bar 0\\
    \bar 1&\bar 1&\bar 0&\bar 1&\bar 0&\bar 0
     \end{array}\right]$
}&
{\tiny
$
\left[
\begin{array}{c}
1
\\
4
\\
\bar 0
\\
\bar 1
\end{array}
\right]
$
}
\\
\hline
8&
{\tiny
$
\begin{array}{c}
     \CC[T_1,\ldots,T_5,S_1]
     \\
     \hline
     \bangle{T_1T_2+T_3^2+T_4^2+T_5^2}
\end{array}$
}
&
$\ZZ^2\times\ZZ_2\times\ZZ_2$&
{\tiny
$\left[\begin{array}{cccccc}
    1&1&1&1&1&1\\
    1&-1&0&0&0&-2\\
    \bar 0&\bar 0&\bar 1&\bar 1&\bar 0&\bar 0\\
    \bar 0&\bar 0&\bar 1&\bar 0&\bar 1&\bar 0
     \end{array}\right]$
}&
{\tiny
$
\left[
\begin{array}{c}
4
\\
-2
\\
\bar 0
\\
\bar 0
\end{array}
\right]
$
}
\\
\hline
9&
{\tiny
$
\begin{array}{c}
     \CC[T_1,\ldots,T_5,S_1,S_2]
     \\
     \hline
     \bangle{T_1T_2+T_3^2+T_4^2+T_5^2}
\end{array}$
}
&
$\ZZ^3\times\ZZ_2\times\ZZ_2$&
{\tiny
$\left[\begin{array}{ccccccc}
    1&1&1&1&1&0&0\\
    -1&1&0&0&0&1&0\\
    0&0&0&0&0&1&1\\
    \bar 0&\bar 0&\bar 1&\bar 1&\bar 0&\bar 0&\bar 0\\
    \bar 1&\bar 1&\bar 0&\bar 1&\bar 0&\bar 0&\bar 0
\end{array}\right]$
}&
{\tiny
$
\left[
\begin{array}{c}
3
\\
1
\\
2
\\
\bar 0
\\
\bar 1
\end{array}
\right]
$
}
\\
\hline
\end{longtable}
}
\end{theorem}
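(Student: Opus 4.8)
The plan is to reduce the statement to a finite case study governed by the singularity estimates of Theorem~\ref{thm:3} and by the isomorphy theory of Cox rings. Let $X$ be as in the statement. Since the maximal torus of $\Aut(X)$ is one-dimensional and $\dim X=3$, the induced torus action is of complexity exactly two, and $X$ is $\QQ$-Gorenstein, being $\QQ$-factorial. Using the structure theory of intrinsic quadrics with maximal acting torus (cf.\ \cite{BeHa2007,HaHiWr2019} and Section~\ref{section:structuralResultsForGeneralArrangementVarieties}) one first checks that $X$ is a general arrangement variety of complexity $c=2$; hence $r-c=1$, so $r=3$, the Cox ring is a hypersurface, and the single defining relation is $\Cl(X)$-homogeneous of the form
$$
g\ =\ a_0T_0^{l_0}+a_1T_1^{l_1}+a_2T_2^{l_2}+a_3T_3^{l_3},
\qquad
T_i^{l_i}=\prod_j T_{ij}^{l_{ij}},
$$
with each monomial $T_i^{l_i}$ of total degree two. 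Thus every block is a single variable squared or a product of two distinct variables, so that, after renumbering, $g$ is one of $T_1^2+T_2^2+T_3^2+T_4^2$, $T_1T_2+T_3^2+T_4^2+T_5^2$, $T_1T_2+T_3T_4+T_5^2+T_6^2$, $T_1T_2+T_3T_4+T_5T_6+T_7^2$, $T_1T_2+T_3T_4+T_5T_6+T_7T_8$; here one must check that no $\Cl(X)$-homogeneous coordinate change lowers the number of terms, since such a reduction would present $X$ as an arrangement variety of smaller complexity and enlarge the maximal torus. Besides the $T_{ij}$ there are $m\ge 0$ free variables $S_k$, the grading is given by the matrix $[w_1,\ldots,w_r]$, and the fan $\Sigma$ of $Z_X$ by a choice of ample class $u$.

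Next I would bound the free rank $\rho$ of $\Cl(X)$. Since $X\subseteq Z_X$ is a hypersurface, $\dim Z_X=4$, so $\Sigma$ has exactly $n_0+\cdots+n_3+m=\rho+4$ rays. By Theorem~\ref{introthm1} the Fano and canonicity hypotheses force the anticanonical complex $\AAA\subseteq\trop(X)$ to be bounded with the origin as its only interior lattice point, and by Corollary~\ref{cor:fanoConvex} it is suitably convex; together with the explicit description of $\trop(X)$ for complexity-two arrangement varieties this caps the number of rays of $\Sigma$ and gives $\rho\le 3$. In particular the last shape of $g$ above, which forces $\rho\ge 4$, is excluded. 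Then, for every relevant cone $\sigma=\varrho_{0j_0}+\varrho_{1j_1}+\varrho_{2j_2}+\varrho_{3j_3}\in\Sigma$, Theorem~\ref{thm:3}\,(ii) yields
$$
\sum_{\varrho\in\sigma^{(1)}}l_\varrho^{-1}\ \ge\ 1+c_\sigma\prod_{\varrho\in\sigma^{(1)}}l_\varrho^{-1}.
$$
As every monomial of $g$ has degree two, each $l_\varrho$ lies in $\{1,2\}$; the inequality then bounds $c_\sigma$ and hence the entries of the primitive generators $v_{ij_i}$. Using $\QQ$-factoriality, so that the relevant cones are simplicial, together with the Fano condition, one bounds the remaining entries of $[w_1,\ldots,w_r]$, leaving only finitely many defining data.

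It remains to sift these finitely many candidates. I would (i) discard those for which $X$ is not $\QQ$-factorial, not canonical or not Fano, by recomputing $\AAA$; (ii) discard those whose automorphism group has a maximal torus of dimension greater than one, which can be read off the grading matrix, for instance through a further homogeneous coordinate change lowering the term count of $g$ or eliminating a variable; and (iii) reduce to a single representative per isomorphy class, using that an isomorphism between two varieties of this kind amounts to an automorphism of $\Cl(X)$ together with a graded isomorphism of the Cox rings, here a permutation and rescaling of variables preserving the defining quadric. The data passing all three filters are precisely those tabulated, and conversely each entry of the table is verified directly to define a three-dimensional $\QQ$-factorial Fano intrinsic quadric with at most canonical singularities and one-dimensional maximal torus, most conveniently by exhibiting its bounded anticanonical complex.

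The principal obstacle is the bulk and delicacy of this last step: after the a priori bounds the family of candidate grading matrices is still sizeable, and both the isomorphy reduction in (iii) and the torus test in (ii) demand meticulous, systematic bookkeeping, carried out in practice with computer assistance. A further point needing care is to show that the quadric shapes with $n_0+\cdots+n_3\in\{6,7\}$ produce no genuinely new varieties, i.e.\ that any such homogeneous presentation compatible with $\rho\le 3$ either is not of true complexity two or, after the isomorphy reduction, already coincides with one of the two shapes $T_1^2+T_2^2+T_3^2+T_4^2$ and $T_1T_2+T_3^2+T_4^2+T_5^2$.
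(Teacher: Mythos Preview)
Your overall architecture---normalize the quadric, bound the Picard number, bound the free entries of $P$, then sift finitely many candidates---matches the paper's. But two of your key reductions are not actually arguments, and both are places where the paper does something substantively different from what you sketch.

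\textbf{The Picard bound.} You assert that boundedness and piecewise convexity of the anticanonical complex, together with the shape of $\trop(X)$, ``caps the number of rays of $\Sigma$ and gives $\rho\le 3$''. This does not follow: boundedness of $\mathcal{A}$ and having only the origin as an interior lattice point say nothing a priori about how many primitive ray generators sit on the boundary of $\mathcal{A}$. The paper obtains $\rho\le 3$ without using canonicity at all. It first observes that $c=\dim(X)-1$ forces $s=1$, hence the free columns $v_k$ of $P$ are $\pm e_{r+1}$ and $m\le 2$. Then, via Lemma~\ref{lem:quadricsTool}, it exhibits for each quadric shape a specific $X$-face whose image under $Q$ has dimension at most $3+m$; $\QQ$-factoriality forces this to be full-dimensional in $K_\QQ$, giving $\rho\le 3+m\le 5$ (Proposition~\ref{proposition:quadricsPicBound1}). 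A second pass with the same $X$-face technique (Proposition~\ref{prop:quadricsPicardNumber}) rules out $\rho=4,5$ and, for full quadrics, forces $\rho=1$. This is what eliminates the shapes with $\sum n_i\ge 6$, not any anticanonical-complex argument.

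\textbf{From $c_\sigma$ to the entries of $P$.} You invoke Theorem~\ref{thm:3}(ii) to bound $c_\sigma$ and then claim this bounds ``the entries of the primitive generators $v_{ij_i}$''. As stated this does not work: $c_\sigma$ is the gcd of the entries of the single vector $v_\sigma=\sum_i \ell_{\sigma,i}v_{ij_i}$, not of the individual $v_{ij_i}$. What makes the argument go through, and what you do not say, is that (a) $v_\sigma$ lies in the one-dimensional lineality space $\lambda_{\mathrm{lin}}$, so it has only its last coordinate nonzero, and (b) one first normalizes $P$ by admissible row operations so that the last row has at most two unknown entries $x,y$. The paper then reads the bound on these parameters directly from the vertex $v_\sigma'=\ell_\sigma^{-1}v_\sigma$ of $\mathcal{A}$ (Corollary~\ref{cor:AOfGArr}, Proposition~\ref{prop:disc}) and from the convexity observation of Remark~\ref{rem:quadricsLinPartBound}, which gives $-1\le(v_\sigma')_{r+1}\le 1$; this is sharper and more transparent than going through~$c_\sigma$.

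For the isomorphy and true-complexity steps your outline is adequate; the paper separates Nos.~2--4 and 6--8 via the Fano index and the effective cone, and establishes one-dimensionality of the maximal torus by lifting the action to $\bar X$ and comparing with the maximal diagonal grading (Lemma~\ref{lem:auto}), which is a concrete version of what you call ``reading off the grading matrix''.
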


On our way proving Theorem $\ref{thm:quadrics}$, we obtain effective bounds for the Picard number of a $\QQ$-factorial Fano intrinsic quadric $X$, see Section \ref{sec:quadrics}. As an application we obtain in Proposition \ref{prop:quadricsTerminal} that in any dimension, there are no $\QQ$-factorial Fano intrinsic quadrics $X$ of true complexity $\dim(X)-1$ having at most terminal singularities.

\tableofcontents

\section{Toric ambient resolutions of singularities}\label{sec:toricAmbientResolutions}
In toric geometry resolution of singularities can be performed in a purely combinatorial manner. 
The idea of toric ambient resolutions of singularities is to make this methods accessible for closed subvarieties of toric varieties. The aim of this section is to give a sufficient criterion on an embedded variety for the existence of a toric ambient resolution of singularities, see Proposition \ref{prop:ambRes}.

Let us fix our terminology:
Consider a toric variety $Z$ with acting torus $T$ and a normal closed subvariety $X \subseteq Z$. Let $\varphi \colon Z' \rightarrow Z$ be a birational toric morphism. 
The \emph{proper transform} of $X \subseteq Z$ is the closure $X' \subseteq Z'$ of $\varphi^{-1}(X \cap T)$. We call $\varphi \colon Z' \rightarrow Z$ a \emph{toric ambient modification} if it maps $X'$ properly onto $X$. If furthermore the proper transform $X'$ is smooth, we call 
$\varphi$ a \emph{toric ambient resolution of singularities} of $X$.

Our approach to toric ambient resolutions of singularities is the following two-step procedure: at first we use methods from tropical geometry to prepare the embedded variety for resolving their singularities in a second step with methods from toric geometry.

Let us recall the basic notions on tropical varieties.
For a closed subvariety $X\subseteq Z$ intersecting the torus non trivially consider the vanishing ideal $I(X \cap T)$ in the Laurent polynomial ring $\OOO(T)$. For every $f \in I(X\cap T)$ let $|\Sigma(f)|$ denote the support of the codimension one skeleton of the normal quasifan of its Newton polytope, 
where a quasifan is a fan, 
where we allow the cones to be non-pointed.
Then the \emph{tropical variety $\trop(X)$ of} $X$ is defined as follows, 
see \cite[Def. 3.2.1]{MaSt2015}:
$$
\trop(X) := \bigcap_{f \in I(X \cap T)} |\Sigma(f)| \subseteq \QQ^{\dim(Z)}.
$$
A closed subvariety $X \subseteq Z$
is called \emph{weakly tropical} 
if the fan $\Sigma$ corresponding to $Z$ is supported on $\trop(X)$. 
In the following we will always assume $\trop(X)$
to be endowed with a fixed quasifan structure.
If $X \subseteq Z$ is weakly tropical then by sufficiently refining the quasifan structure fixed on $\trop(X)$ we achieve that $\Sigma$ is a subfan of $\trop(X)$.

\begin{construction}
Let $X \subseteq Z$ be a closed subvariety intersecting the torus non-trivially, consider the defining fan $\Sigma$ of $Z$ and the coarsest common refinement 
$$\Sigma':= {\Sigma \sqcap \trop(X) := \left\{\sigma \cap \tau; \ \sigma \in \Sigma, \ \tau \in \trop(X)\right\}}.$$
Let
$\varphi\colon Z'\rightarrow Z$ be the toric morphism
arising from the refinement of fans
${\Sigma' \rightarrow \Sigma}$ and
let $X'$ be the proper transform of $X$ under $\varphi$. We call $Z'\rightarrow Z$ a 
{\em weakly tropical resolution of $X$}. 
\end{construction}

Let $Z' \rightarrow Z$ be a weakly tropical resolution of $X$. Then the embedding $X' \subseteq Z'$ is weakly tropical as by construction $|\trop(X)| = |\trop(X')|$ holds.
Note that $Z'$ and thus $X'$ depend on the
choice of the quasifan structure fixed on $\trop(X)$.

For a toric variety $Z$ we denote by $Z_{\sigma}\subseteq Z$ the affine toric chart corresponding to the cone $\sigma \in \Sigma$ in the lattice $N$. 
We will make use of the local product structure of toric varieties:

\begin{construction}
\label{constr:decomp}
Let $X\subseteq Z$ be weakly tropical and let $\sigma \in \Sigma$ be any cone. 
Choose a maximal cone $\tau \in \trop(X)$ with $\sigma \preceq \tau$, set 
$N(\tau):= N\cap \lin_\QQ(\tau)$ 
and fix a decomposition $N = N(\tau) \oplus \tilde{N}$. Accordingly, we obtain a product decomposition
$$Z_\sigma \cong U(\sigma) \times \tilde\TT,$$
where $U(\sigma) := U(\sigma, \tau)$ is the affine toric variety corresponding to the cone $\sigma$ in the lattice $N(\tau)$
and $\tilde\TT $ is a torus.
We write
$\pi_{\sigma} := \pi_{\sigma, \tau, \tilde{N}}$ for the projection $Z_{\sigma} \rightarrow U(\sigma)$.
\end{construction}

Due to the structure theorem for tropical varieties, the maximal cones $\tau \in \trop(X)$ are of dimension $\dim(X)$. In particular,
in the situation of the above construction,
$U(\sigma)$ does up to isomorphism not depend on the choices made.
Moreover, if $X \subseteq Z$ is complete then due to \cite[Prop. 6.4.7]{MaSt2015} we have $|\Sigma| = |\trop(X)|$ and for any maximal cone $\sigma \in \Sigma$ the cone $\tau \in \trop(X)$ as chosen above equals $\sigma$.

\begin{definition}\label{def:locallyToric}
Let $X\subseteq Z$ be weakly tropical.
We call $X\subseteq Z$ {\em semi-locally toric} if
for every maximal cone $\sigma \in \Sigma$ there exists a projection $\pi_{\sigma}$ as in Construction \ref{constr:decomp} that maps $X_{\sigma}:=X \cap Z_{\sigma}$ isomorphically onto its image $\pi_{\sigma}(X_\sigma)$ and the latter is an open subvariety of $U(\sigma)$.
\end{definition}

\begin{remark}
Let $X$ be a rational $\TT$-variety of complexity one.
Then $X$ allows an equivariant embedding into a toric variety $Z$ such that any weakly tropical resolution is semi-locally toric, see \cite[Prop. 3.4.4.6]{ArDeHaLa2015}.
\end{remark}

Note that given a weakly tropical embedding $X \subseteq Z$ the notion of being semi-locally toric is preserved when passing over to another embedding $X' \subseteq Z'$ provided by a toric isomorphism $Z \rightarrow Z'$ as made precise in the following remark:

\begin{remark}\label{rem:sublattice}
Let $A \colon N_1 \rightarrow N_2$ be an isomorphism of lattices (we use the letter $A$ as well to denote the induced linear map $N_1\otimes\QQ \rightarrow N_2\otimes\QQ$) defining an isomorphism of affine toric varieties $\varphi_A \colon Z_1 \rightarrow Z_2$, with defining cones $\sigma_1$ in $N_1$ and $\sigma_2:= A(\sigma_1)$ in $N_2$.
Let $X \subseteq Z_1$ be a semi-locally toric closed subvariety and $N_1 = N_1(\sigma) \oplus \tilde {N_1}$ be a decomposition of $N_1$ 
such that the
corresponding projection $\pi_{\sigma} \colon Z_1 \rightarrow U(\sigma_1)$ maps $X$ isomorphically onto an open subset of $U(\sigma_1)$. Then 
$A(\trop(X)) = \trop(\varphi_A(X))$ holds and
the decomposition $N_2 = A(N_1(\sigma)) \oplus A( \tilde{N_1})$ corresponds to a projection
$\pi_{\sigma_2} \colon Z_2 \rightarrow U(\sigma_2)$ mapping $\varphi_A(X_{\sigma_1})$ isomorphically onto an open subset of $U(\sigma_2)$.
\end{remark}

\begin{proposition}\label{prop:ambRes}
Let $X\subseteq Z$ be a closed subvariety admitting a semi-locally toric weakly tropical resolution, meaning $X'\subseteq Z'$ is semi-locally toric.
Then $X\subseteq Z$ admits a toric ambient resolution of singularities.
\end{proposition}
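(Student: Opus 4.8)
The plan is to reduce the resolution of $X$ to the purely combinatorial problem of resolving a toric variety, using the semi-locally toric structure of the weakly tropical resolution $X' \subseteq Z'$ to transfer the resolving fan back down. First I would pass from $X \subseteq Z$ to the weakly tropical resolution $\varphi_0 \colon Z' \to Z$ coming from the refinement $\Sigma' = \Sigma \sqcap \trop(X) \to \Sigma$. By Construction~1 this is a toric morphism, by the structure theorem the proper transform $X' \subseteq Z'$ has $|\trop(X')| = |\trop(X)|$, and by hypothesis $X' \subseteq Z'$ is semi-locally toric. It remains to show $Z' \to Z$ is a toric ambient modification (i.e. $X'$ maps properly onto $X$) and that from $X' \subseteq Z'$ we can build a further toric morphism $Z'' \to Z'$ that is a toric ambient resolution.

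Next I would exploit Definition~\ref{def:locallyToric} chart by chart. Fix a maximal cone $\sigma \in \Sigma'$; since $X' \subseteq Z'$ is complete-on-these-charts in the relevant sense, the chosen maximal cone $\tau \in \trop(X')$ equals $\sigma$, and Construction~\ref{constr:decomp} gives a splitting $N = N(\sigma) \oplus \tilde N$ with $Z'_\sigma \cong U(\sigma) \times \tilde\TT$ such that $\pi_\sigma$ restricts to an isomorphism of $X'_\sigma := X' \cap Z'_\sigma$ onto an open subvariety of the affine toric variety $U(\sigma)$ (cut out by the cone $\sigma$ inside the lower-dimensional lattice $N(\sigma)$). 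Now I would resolve each $U(\sigma)$ by the classical toric algorithm: choose a regular subdivision $\Delta_\sigma$ of $\sigma$ in $N(\sigma)$, so that $U(\sigma)_{\Delta_\sigma} \to U(\sigma)$ is a resolution. Pulling back along $\pi_\sigma$ — equivalently, subdividing $\sigma$ inside $N$ by the cones of $\Delta_\sigma$ viewed via the splitting, leaving the $\tilde N$-directions untouched — produces a regular subdivision of $\sigma$ in $N$ whose associated toric chart has the product form $U(\sigma)_{\Delta_\sigma} \times \tilde\TT$; the proper transform of $X'_\sigma$ there is isomorphic (via the transported projection, using Remark~\ref{rem:sublattice} to see the splitting and the tropicalization behave correctly under the lattice automorphism identifying charts) to an open subvariety of the smooth $U(\sigma)_{\Delta_\sigma}$, hence is smooth.

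The genuinely delicate point is globalisation: the local subdivisions $\Delta_\sigma$ of the maximal cones, together with the untouched lower-dimensional cones, must be glued into a single fan $\Sigma''$ refining $\Sigma'$. Two issues arise. First, the subdivisions must agree on shared faces; this is handled in the standard toric way — one may first pass to the barycentric (or a sufficiently fine star) subdivision of $\Sigma'$ to make all cones simplicial, then resolve simplicial cones canonically, so the choices are compatible. Second — and this is the step I expect to be the main obstacle — one must check that the splitting $N = N(\sigma) \oplus \tilde N$ can be chosen coherently enough that ``subdividing only in the $N(\sigma)$-direction'' is well defined on overlaps; on the intersection of two maximal charts the two product decompositions need not coincide, but since both restrict $X'$ to an isomorphism onto an open piece of the respective $U(\sigma)$, the transition is governed by a lattice isomorphism as in Remark~\ref{rem:sublattice}, and one argues that the induced subdivisions of the common face match. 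Once $\Sigma''$ is constructed, the toric morphism $Z'' \to Z' \to Z$ maps the proper transform $X''$ properly onto $X$ (properness of each step is preserved under composition and follows from completeness of the fibres over the torus-fixed strata, as in the toric ambient modification argument), and $X''$ is covered by the smooth charts produced above, so $X''$ is smooth. Hence $Z'' \to Z$ is the desired toric ambient resolution of singularities of $X \subseteq Z$. $\qed$
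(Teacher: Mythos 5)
Your overall strategy coincides with the paper's: pass to the weakly tropical resolution $X'\subseteq Z'$, resolve the ambient toric variety by a regular subdivision, and use the semi-locally toric product charts to see that the proper transform is smooth. But there is one genuine gap, and one place where you declare a difficulty that is not actually there.

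The gap is the properness of $X'\to X$. You correctly flag that this must be shown, but your closing justification (``properness of each step is preserved under composition and follows from completeness of the fibres over the torus-fixed strata'') does not engage with the actual problem: the toric morphism $Z'\to Z$ is in general \emph{not} proper, since $\Sigma'=\Sigma\sqcap\trop(X)$ has support $|\Sigma|\cap|\trop(X)|$, which is typically strictly smaller than $|\Sigma|$. Hence properness of the restriction to $X'$ cannot be inherited from the ambient map, and there is no ``toric ambient modification argument'' to appeal to --- that is exactly what is being proved. The paper's Lemma~\ref{lem:weaklyTropicalIsProper} completes the quasifan $\trop(X)$ to one with full support, obtains a proper toric morphism $\tilde\varphi\colon Z''\to Z$ extending $\varphi$, and then uses Tevelev's criterion to show that the closure of $\varphi^{-1}(X\cap T)$ in $Z'$ already equals its closure in $Z''$; only then is $X'\to X$ proper. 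Some argument of this kind must be supplied.

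By contrast, your ``main obstacle'' --- choosing the splittings $N=N(\sigma)\oplus\tilde N$ coherently so that ``subdividing only in the $N(\sigma)$-direction'' is well defined on overlaps --- is a phantom. Since $X'\subseteq Z'$ is weakly tropical, every cone $\sigma\in\Sigma'$ lies in some maximal cone $\tau$ of $\trop(X)$ and hence in $\lin_\QQ(\tau)=N(\sigma)\otimes\QQ$, so \emph{any} subdivision of $\sigma$ automatically consists of cones in the $N(\sigma)$-factor; the splitting is used only to define the projection, not to constrain the subdivision. The paper therefore simply takes a single global regular subdivision $\Sigma''\to\Sigma'$ (whose existence is standard and also disposes of your face-compatibility worry, which the ``barycentric then canonical'' remedy leaves vague) and checks chart by chart, as in Lemma~\ref{lem:resolveLocallyToric}, that $Z''_{\sigma}\cong V(\sigma)\times\tilde\TT$ for the same splitting and that the projection still restricts to an isomorphism on the proper transform. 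One further point your sketch should include, and which the paper settles by an irreducibility-and-dimension count, is that the proper transform $X''\cap Z''_{\sigma}$ equals the full preimage $\psi^{-1}(X'_{\sigma})$, since it is the latter that the projection identifies with an open subset of the smooth $V(\sigma)$.
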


The rest of this section is dedicated to the proof of Proposition \ref{prop:ambRes}.
Below (and in the rest of this article) we will make frequent use of the following criterion, to which we will refer to as \emph{Tevelev's criterion}, see {\cite[Lem. 2.2]{Te2007} and \cite[Thm. 6.3.4]{ MaSt2015}}:

\begin{remark}
Let $X\subseteq Z$ be a closed embedding.
Then $X$ intersects the torus orbit $T \cdot z_\sigma$ corresponding to the cone ${\sigma \in \Sigma}$ non-trivially if and only if the relative interior $\sigma^\circ$ intersects the tropical variety $\trop(X)$ non-trivially.
Moreover, if $X \subseteq Z$ is weakly tropical, then the intersection
$T\cdot z_{\sigma} \cap X$
is pure of dimension ${\dim(X)-\dim(\sigma)}.$
\end{remark}

\begin{lemma}\label{lem:weaklyTropicalIsProper}
Let $X \subseteq Z$ be a closed embedding. Then any
weakly tropical resolution $\varphi\colon Z'\rightarrow Z$ is a toric ambient modification.
\end{lemma}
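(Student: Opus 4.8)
The plan is to verify directly the two requirements in the definition of a toric ambient modification. That $\varphi$ is a birational toric morphism is immediate, since it comes from a refinement of fans. Next, $\varphi$ maps $X'$ into $X$: by definition $X'$ is the closure of $\varphi^{-1}(X\cap T)$, and since $X$ is closed so is $\varphi^{-1}(X)\supseteq\varphi^{-1}(X\cap T)$; hence $X'\subseteq\varphi^{-1}(X)$ and $\varphi$ restricts to a morphism $\psi\colon X'\to X$. Moreover $\psi$ is dominant: its image contains $\varphi^{-1}(X\cap T)\cap T=X\cap T$, which is dense in $X$ (here one uses that $X$ is irreducible and meets $T$). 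So the whole content is the \emph{properness} of $\psi$; surjectivity follows afterwards, as a proper morphism is closed and $\psi$ is dominant.

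Properness is local on the target, so I would verify it over each affine chart $X_\sigma:=X\cap Z_\sigma$, $\sigma\in\Sigma$, and for this I complete the ambient toric variety over that chart. Choose a complete fan $\bar\Sigma_\sigma$ in $N_\QQ$ having $\sigma$ among its cones, so that $Z_\sigma$ is an open subvariety of the complete toric variety $\bar Z_\sigma:=Z(\bar\Sigma_\sigma)$. Put $\bar Z'_\sigma:=Z(\bar\Sigma_\sigma\sqcap\trop(X))$ with its toric morphism $\bar\varphi_\sigma\colon\bar Z'_\sigma\to\bar Z_\sigma$, and let $\bar X_\sigma\subseteq\bar Z_\sigma$ and $\bar X'_\sigma\subseteq\bar Z'_\sigma$ be the proper transforms of $X$, that is, the closures of $X\cap T$. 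Since $\bar\Sigma_\sigma$ is complete, the common refinement $\bar\Sigma_\sigma\sqcap\trop(X)$ has support $N_\QQ\cap|\trop(X)|=|\trop(X)|$, which is the tropical variety of $X\cap T$. So $\bar X'_\sigma$ is the closure of $X\cap T$ in a toric variety whose fan has support equal to $\trop(X\cap T)$, hence a tropical compactification of $X\cap T$, and in particular complete by \cite{Te2007} (see also \cite{MaSt2015}). Therefore $\bar\varphi_\sigma$ restricts to a morphism $\bar X'_\sigma\to\bar X_\sigma$ from a complete variety into a separated one, which is automatically proper.

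It then remains to identify $\psi$ over $Z_\sigma$ with this proper morphism. The key point is that completing $\sigma$ to $\bar\Sigma_\sigma$ only adds cones outside $\sigma$, so a direct check on the cones of $\Sigma\sqcap\trop(X)$ lying in $\sigma$ gives $\varphi^{-1}(Z_\sigma)=\bar\varphi_\sigma^{-1}(Z_\sigma)$ as open subsets and shows that taking proper transforms is compatible with these open immersions; thus $\psi^{-1}(X_\sigma)=X'\cap\varphi^{-1}(Z_\sigma)=\bar X'_\sigma\cap\bar\varphi_\sigma^{-1}(Z_\sigma)=(\bar\varphi_\sigma|_{\bar X'_\sigma})^{-1}(X_\sigma)$. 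Hence $\psi^{-1}(X_\sigma)\to X_\sigma$ is the base change of the proper morphism $\bar X'_\sigma\to\bar X_\sigma$ along the open immersion $X_\sigma\hookrightarrow\bar X_\sigma$, hence proper; letting $\sigma$ range over $\Sigma$ shows that $\psi$ is proper.

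The step I expect to be the main obstacle is this last identification: one must make sure that passing from $\sigma$ to the complete fan $\bar\Sigma_\sigma$ does not enlarge the preimage of $Z_\sigma$ — if it did, $\bar X'_\sigma$ could meet torus orbits lying over $\bar Z_\sigma\setminus Z_\sigma$ (being weakly tropical, $\bar X'_\sigma$ meets \emph{every} orbit of $\bar Z'_\sigma$ by Tevelev's criterion), and the base-change identification would break. An alternative that avoids fan completions is the valuative criterion applied directly: for a discrete valuation ring $R$ with fraction field $K$, an $R$-point of $X$ together with a lift to a $K$-point of $X'$, the associated valuation vector recording the orders of vanishing of the monomial coordinates lies in $\trop(X)$ by the fundamental theorem of tropical geometry, hence in $|\Sigma|\cap|\trop(X)|$, which is exactly the support of the fan of $Z'$; this lets one extend the $K$-point to an $R$-point of $Z'$, which lands in the closed set $X'$ because its generic point does, and uniqueness follows from separatedness of $Z'$. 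The delicate part of that route is reducing an arbitrary $K$-point of $X'$ to one supported in the torus, which again forces one to pass to the orbit stratification.
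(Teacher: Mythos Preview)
Your argument is correct, and the identification you flag as the potential obstacle does go through: a cone of $\bar\Sigma_\sigma\sqcap\trop(X)$ contained in $\sigma$ is of the form $\eta\cap\tau'$ with $\eta\in\bar\Sigma_\sigma$, $\tau'\in\trop(X)$, and since $\eta\cap\sigma$ is a face of $\sigma$ one has $\eta\cap\tau'=(\eta\cap\sigma)\cap\tau'$, so the fan over $Z_\sigma$ is the same whether you start from $\Sigma$ or from $\bar\Sigma_\sigma$.

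The paper's proof, however, takes a shorter and dual route: instead of completing $\Sigma$ chart by chart, it completes $\trop(X)$ to a quasifan $\trop(X)^c$ with support all of $N_\QQ$. Then $\Sigma\sqcap\trop(X)^c$ is a genuine subdivision of $\Sigma$ (same support), so the induced toric morphism $\tilde\varphi\colon Z''\to Z$ is automatically proper, with $Z'\subseteq Z''$ open and $\tilde\varphi|_{Z'}=\varphi$. The one remaining point is that the closure of $\varphi^{-1}(X\cap T)$ in $Z''$ does not grow beyond its closure in $Z'$, and this follows immediately from Tevelev's criterion: the new cones in $\Sigma\sqcap\trop(X)^c$ all have relative interior outside $\trop(X)$, so their torus orbits miss $X'$. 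Thus $X'\to X$ is the restriction of the proper map $\tilde\varphi$ to a closed subvariety, hence proper.

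The trade-off: your approach localises on the base and invokes the completeness of tropical compactifications (itself a nontrivial consequence of Tevelev's work), whereas the paper globalises by completing the tropical side and reduces everything to a single, direct application of Tevelev's orbit-intersection criterion. The paper's version avoids the chart-by-chart bookkeeping and the base-change identification entirely.
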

\begin{proof}
We have to show that $\varphi$ maps $X'$ properly onto $X$.
Consider any completion $\trop(X)^c$ of 
the quasifan $\trop(X)$, i.e.\ a quasifan $\trop(X)^c$ with support $|\trop(X)^c| = \QQ^{\dim(Z)}$ such that $\trop(X)$ is a subfan of $\trop(X)^c$. 
Then the morphism of fans
$\Sigma \sqcap \trop(X)^c \rightarrow \Sigma$
defines a proper morphism of toric varieties
$\tilde\varphi \colon Z'' \rightarrow Z$
with $\tilde{\varphi}|_{Z'} = \varphi$,
where we regard $Z'$ as an open subset of $Z''$. We conclude that $\varphi \colon X' \rightarrow X$ is proper as Tevelev's criterion implies
$$X' = \overline{\varphi^{-1}(X\cap T)}^{Z'} = \overline{\tilde\varphi^{-1}(X\cap T)}^{Z''}.$$
\end{proof}

Let $\Sigma$ be a fan. We say that a fan $\Sigma'$ in the same lattice is a \emph{subdivision} of $\Sigma$ if
any cone $\sigma'\in\Sigma'$ is contained in a cone $\sigma\in\Sigma$ and $|\Sigma| = |\Sigma'|$ holds. Any subdivision of fans $\Sigma'\rightarrow\Sigma$ defines a proper birational morphism of the corresponding toric varieties $Z'\rightarrow Z$.

\begin{lemma}
\label{lem:resolveLocallyToric}
Let $X \subseteq Z$ be a semi-locally toric  weakly tropical embedding.
Consider
a proper birational toric morphism $\psi\colon Z'\rightarrow Z$ defined by a subdivision of fans $\Sigma'\rightarrow \Sigma$.
For $\sigma \in \Sigma$ denote by 
$\pi_{\sigma} \colon Z_{\sigma} \rightarrow U(\sigma)$ the projection mapping $X_{\sigma}$ isomorphically onto an open subvariety of $U(\sigma)$ 
and consider the morphism of toric varieties 
$\psi(\sigma)\colon V(\sigma) \rightarrow U(\sigma)$ arising via the 
subdivision of the cone $\sigma$ in $N(\sigma)$.
Then there is a commutative diagram
\begin{center}
  \begin{tikzcd}
    Z'_{\sigma}\arrow[d,"\psi",xshift=-14mm]
    \quad \cong \quad
    V(\sigma)\times\tilde{\TT}
    \arrow[d, "\psi(\sigma)\times\id", xshift=8mm]
    \arrow[r,"\pi'_{\sigma}"]&
    V(\sigma)\arrow[d, "\psi(\sigma)"]
    \\
    Z_{\sigma}
    \quad \cong\quad
    U(\sigma)\times\tilde{\TT}\arrow[r,"\pi_{\sigma}"]
    & 
    U(\sigma),
    \end{tikzcd}
\end{center}
where we set $Z'_{\sigma} := \psi^{-1}(Z_{\sigma})$, and $\pi'_{\sigma}$ maps $X' \cap Z'_{\sigma}$ isomorphically onto an open subvariety of $V(\sigma)$. 
Moreover, the following statements hold:
\begin{enumerate}
\item
The proper transform $X'$ with respect to $\psi$ equals $\psi^{-1}(X)$.
\item 
The subvariety $X'\subseteq Z'$ is weakly tropical
and semi-locally toric.
\end{enumerate}
\end{lemma}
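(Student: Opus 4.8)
The plan is to work cone by cone, exploiting the local product structure from Construction~\ref{constr:decomp}. First I would fix a maximal cone $\tau\in\trop(X)$ containing $\sigma$ and a splitting $N=N(\tau)\oplus\tilde N$, which gives $Z_\sigma\cong U(\sigma)\times\tilde\TT$ and a projection $\pi_\sigma$ mapping $X_\sigma$ isomorphically onto an open subset of $U(\sigma)$. The key observation is that the subdivision $\Sigma'\to\Sigma$ restricted over $Z_\sigma$ only subdivides $\sigma$; since $\sigma\subseteq\lin_\QQ(\tau)=\lin_\QQ(N(\sigma))$, each new cone $\sigma'\preceq\sigma$ again lies in $\lin_\QQ(\tau)$, so the same splitting $N=N(\tau)\oplus\tilde N$ works simultaneously for all of them. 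This yields $Z'_\sigma\cong V(\sigma)\times\tilde\TT$ where $V(\sigma)\to U(\sigma)$ is the toric morphism induced by the subdivision of $\sigma$ inside $N(\sigma)$, and the diagram commutes because everything is the identity on the $\tilde\TT$-factor; this is the displayed commutative square.

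Next I would establish part~(i), that $X'=\psi^{-1}(X)$. Working locally, $X\cap Z_\sigma$ corresponds under the product decomposition to $\pi_\sigma(X_\sigma)\times\tilde\TT$ (because $X_\sigma\to\pi_\sigma(X_\sigma)$ is an isomorphism and $X_\sigma$ is $\tilde\TT$-stable as it is cut out by functions pulled back from $U(\sigma)$ — more precisely $X_\sigma=\pi_\sigma^{-1}(\pi_\sigma(X_\sigma))$). Pulling back along $\psi(\sigma)\times\id$ gives $\psi^{-1}(X)\cap Z'_\sigma=(\pi'_\sigma)^{-1}\big((\psi(\sigma))^{-1}(\pi_\sigma(X_\sigma))\big)$, and since $\psi(\sigma)$ is birational and proper, $(\psi(\sigma))^{-1}(\pi_\sigma(X_\sigma))$ is the closure of the preimage of its torus part inside $V(\sigma)$, i.e.\ the proper transform of $\pi_\sigma(X_\sigma)$. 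Transporting back through $\pi'_\sigma$, this is exactly the proper transform $X'\cap Z'_\sigma$ (here I use Tevelev's criterion, as in Lemma~\ref{lem:weaklyTropicalIsProper}, to identify proper transform with scheme-theoretic preimage closure, and the fact that $|\trop(X)|$ is unchanged so no component of $X$ lands in the boundary of $Z$). Gluing over the maximal cones of $\Sigma$ gives $X'=\psi^{-1}(X)$ globally.

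For part~(ii), that $X'\subseteq Z'$ is weakly tropical and semi-locally toric: since $|\trop(X')|=|\trop(X)|=|\Sigma|$ and $\Sigma'$ subdivides $\Sigma$, the fan $\Sigma'$ is supported on $\trop(X')$, so $X'\subseteq Z'$ is weakly tropical. For semi-local toricness I must produce, for every maximal cone $\sigma'\in\Sigma'$, a projection as in Construction~\ref{constr:decomp} trivializing $X'$. Each such $\sigma'$ lies in a unique maximal cone $\sigma\in\Sigma$ (using completeness, or just that $\sigma'\subseteq\sigma$ and maximal cones of a subdivision of a complete fan are full-dimensional), and a maximal cone $\tau'\in\trop(X')$ containing $\sigma'$; by the structure theorem $\dim\tau'=\dim X=\dim\tau$ and since both contain $\sigma'$ of full dimension inside $\lin_\QQ(\tau)$ one gets $\lin_\QQ(\tau')=\lin_\QQ(\tau)$. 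Hence the splitting $N=N(\tau)\oplus\tilde N$ is admissible for $\sigma'$ as well, and the composite $X'\cap Z'_{\sigma'}\hookrightarrow X'\cap Z'_\sigma\xrightarrow{\ \pi'_\sigma\ }V(\sigma)$ lands in the affine chart $V(\sigma)_{\sigma'}=U(\sigma')$ and is an open immersion there by the first part of the lemma. Spelling out that this composite is precisely the projection $\pi'_{\sigma'}$ attached to the splitting, via Remark~\ref{rem:sublattice} to match lattice decompositions, completes the verification.

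The main obstacle I expect is the bookkeeping in~(ii): carefully checking that the projection obtained by restricting $\pi'_\sigma$ to the smaller chart $Z'_{\sigma'}$ genuinely coincides (up to the harmless identifications of Remark~\ref{rem:sublattice}) with a projection of the form prescribed in Construction~\ref{constr:decomp} for the cone $\sigma'$ in its own lattice $N(\tau')$ — in particular that the choice of $\tilde N$ made for $\sigma$ can be reused, which hinges on the equality $\lin_\QQ(\tau')=\lin_\QQ(\tau)$ of tropical linear spans. Everything else (the product decomposition, the commutative square, part~(i)) is a routine transport of structure through the local toric picture.
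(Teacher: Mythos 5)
Your construction of the product decomposition $Z'_\sigma\cong V(\sigma)\times\tilde\TT$, the commutative square, and your outline of part~(ii) all follow the paper's route. But the central step of your argument for~(i) rests on a false claim: you assert that $X_\sigma=\pi_\sigma^{-1}(\pi_\sigma(X_\sigma))$, i.e.\ that under $Z_\sigma\cong U(\sigma)\times\tilde\TT$ the subvariety $X_\sigma$ is the cylinder $\pi_\sigma(X_\sigma)\times\tilde\TT$. This contradicts the hypothesis you invoke in the same sentence: semi-local toricness (Definition~\ref{def:locallyToric}) says that $\pi_\sigma$ restricts to an \emph{isomorphism} of $X_\sigma$ onto the open set $W:=\pi_\sigma(X_\sigma)\subseteq U(\sigma)$, and since $\dim U(\sigma)=\mathrm{rk}\, N(\tau)=\dim X$ by the structure theorem, this forces $\dim X_\sigma=\dim W=\dim X$, whereas the cylinder has dimension $\dim X+\dim\tilde\TT>\dim X$ whenever $X\subsetneq Z$. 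The correct picture is that $X_\sigma$ is the \emph{graph} of a morphism $s\colon W\to\tilde\TT$, not a $\tilde\TT$-stable product. Consequently your displayed identity $\psi^{-1}(X)\cap Z'_\sigma=(\pi'_\sigma)^{-1}\bigl(\psi(\sigma)^{-1}(W)\bigr)$ fails for the same dimension reason, and the chain of equalities you build toward~(i) breaks down; it is also inconsistent with your own part~(ii), where you correctly use that $\pi'_\sigma$ restricted to $X'\cap Z'_{\sigma}$ is an open immersion into $V(\sigma)$.

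The repair is exactly the paper's argument. Since $X_\sigma$ is the graph of $s$ over $W$ and $\psi$ acts as $\psi(\sigma)\times\id$ on $Z'_\sigma$, the preimage $\psi^{-1}(X_\sigma)$ is the graph of $s\circ\psi(\sigma)$ over $\psi(\sigma)^{-1}(W)$; hence $\pi'_\sigma$ restricts to an isomorphism $\psi^{-1}(X_\sigma)\to\psi(\sigma)^{-1}(W)$. In particular $\psi^{-1}(X_\sigma)$ is irreducible of dimension $\dim X$, and since the proper transform $X'\cap Z'_\sigma$ is a closed irreducible subvariety of $\psi^{-1}(X_\sigma)$ of the same dimension, the two coincide. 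Gluing over the cones of $\Sigma$ gives~(i), and openness of $\psi(\sigma)^{-1}(W)$ in $V(\sigma)$ gives the assertion about $\pi'_\sigma$; no appeal to Tevelev's criterion is needed at this point. With this correction your part~(ii) goes through as you describe.
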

\begin{proof}
Let $N = N(\sigma) \oplus \tilde{N}$ be the decomposition of $N$ giving rise to the isomorphism 
$Z_{\sigma} \cong U(\sigma) \times \tilde{\TT}$ and the corresponding projection $\pi_{\sigma}$. 
Then by construction the defining fan of $Z'_{\sigma}$
is supported in $N(\sigma) \otimes \QQ$. Using the same decomposition of $N$ as above we thus obtain an isomorphism
$Z'_{\sigma} \cong V(\sigma) \times \tilde \TT$,
the corresponding projection $\pi_{\sigma}'$
and a commutative diagram as claimed. 
As $\pi_{\sigma}$ maps $X_{\sigma}$ isomorphically onto its image we conclude that the projection
$\pi'_{\sigma}$ restricts to an isomorphism
$$\pi'_{\sigma}\colon \psi^{-1}(X_{\sigma}) \rightarrow \psi(\sigma)^{-1}(\pi_{\sigma}(X_{\sigma})).$$
We show that $\psi^{-1}(X_{\sigma})= X'\cap Z'_{\sigma}$ holds:
As $\psi(\sigma)^{-1}(\pi_\sigma(X_{\sigma}))$ is irreducible, so is $\psi^{-1}(X_{\sigma})$.
Thus using that $X' \cap Z'_{\sigma} \subseteq \psi^{-1}(X_{\sigma})$ is a closed irreducible subvariety of the same dimension we obtain equality as claimed.
This proves Supplement (i) and the assertion as $\psi(\sigma)^{-1}(\pi_{\sigma}(X_{\sigma}))$ is an open subvariety of $V(\sigma)$. Supplement (ii) follows by restricting the toric projection $\pi'_{\sigma}$ to the affine toric charts. 
\end{proof}

\begin{proof}[Proof of Proposition \ref{prop:ambRes}]
Let $\varphi \colon Z' \rightarrow Z$ be a semi-locally toric weakly tropical resolution. Then due to Lemma \ref{lem:weaklyTropicalIsProper} the morphism $\varphi$ maps $X'$ properly onto $X$. 
Now let $\psi \colon Z'' \rightarrow Z'$ be any toric resolution of singularities of $Z'$
arising via a regular subdivision of its defining fan $\Sigma'$
and denote by $X''$ the proper transform of $X'$ with respect to $\psi$. 
Then $\varphi\circ \psi \colon X'' \rightarrow X$ is the composition of proper morphisms and hence is proper. 
Moreover, as 
$Z''$ is smooth, the toric varieties $U(\sigma'')$, where $\sigma'' \in \Sigma''$, are smooth. In particular, as 
$X''$ is semi-locally toric due to Lemma~\ref{lem:resolveLocallyToric}~(ii), 
it is smooth as well
and we conclude that $\varphi \circ \psi$ is a toric ambient resolution of singularities. 
\end{proof}

\section{Anticanonical complexes}\label{sec:anticanonicalComplexes}
Let $Z$ be a toric variety and $X \subseteq Z$ be a closed subvariety. 
Denote by $Z_0 \subseteq Z$ the (open) 
union of all $T$-orbits of codimension 
at most one in~$Z$.
Assume that $X$ intersects $T \subseteq Z$ 
and that $X_0 := X \cap Z_0$ has a 
complement of codimension at least two in 
$X$. Then we obtain a pullback homomorphism
$$ 
\WDiv^T(Z) \ \to \ \WDiv(X), 
\qquad
D \ \mapsto \ D\vert_X,
$$
which, given a $T$-invariant Weil divisor on $Z$,
first restricts to the smooth $Z_0$, then pulls 
back to $X_0$ and finally extends to $X$ by closing 
components.
In this situation, we call $X \subseteq Z$ \emph{adapted}
if for every $T$-invariant prime divisor $D$ on $Z$, the pullback $D|_X$ is a prime divisor on~$X$. 

Assume $X \subseteq Z$ to be adapted. 
Let $\varphi \colon Z' \to Z$ be a toric ambient modification, arising from a refinement of fans $\Sigma' \rightarrow \Sigma$ in a lattice $N$, 
meaning that every $\sigma'\in \Sigma'$ is contained in some $\sigma \in \Sigma$.
We call
$\varphi$ an 
\emph{adapted toric ambient modification}
if besides $X \subseteq Z$ also $X' \subseteq Z'$ is adapted (requiring in particular~$X'$ to 
be normal). 

Let $X$ be a normal $\QQ$-Gorenstein variety. Recall that given any proper birational morphism $\varphi \colon X' \to X$ 
with a normal variety $X'$ and a canonical 
divisor $k_{X'}$ on $X'$, we have the ramification formula
$$ 
k_{X'} - \varphi^*\varphi_* k_{X'} 
\ = \ 
\sum a_E E,
$$
where $E$ runs through the exceptional prime divisors of $X' \rightarrow X$.
The number $\mathrm{discr}_X(E) := a_E \in \QQ$
is called the~\emph{discrepancy} of $X$ with respect 
to $E$; it doesn't depend on the choice of~$k_{X'}$ 
and, identifying $E$ with the local ring 
$\mathcal{O}_{X,E} \subseteq \CC(X)$,
it depends not even on 
the choice of
$\varphi \colon X' \to X$.

\begin{definition}
\label{def:antiCanRegion}
Let $X$ be $\QQ$-Gorenstein and $X \subseteq Z$ be an adapted
embedding.
Assume that the following conditions hold:
\begin{enumerate}
    \item The weakly tropical resolution $\varphi \colon Z' \rightarrow Z$ is adapted.
    \item Every proper birational toric morphism $Z'' \rightarrow Z'$ is adapted. 
    \item  There exists at least one toric ambient resolution of singularities $Z''\rightarrow Z'$.
\end{enumerate}
Then for every ray $\varrho \subseteq |\Sigma'|$ there exists a proper toric morphism $\psi \colon Z''\rightarrow Z'$ with $\varrho \in \Sigma''$.
Denote by $D_{Z''}^{\varrho}$ the corresponding toric divisor and set 
$$a_{\varrho} := \mathrm{disc}_X(D_{Z''}^{\varrho}|_{X''}).$$
Let $v_\varrho$ denote the primitive ray generator of $\varrho$ and for $a_{\varrho} > -1$ set $v_\varrho' := \frac{1}{a_{\varrho} +1 }v_{\varrho}$.
The \emph{anticanonical region} of $X \subseteq Z$ is the set
$$
\mathcal{A} := \bigcup_{\small \varrho \ \! \subseteq \ \! |\Sigma'|} \mathcal{A}_\varrho, \qquad
\mathcal{A}_{\varrho} 
:=
\begin{cases}
\conv(0, v_\varrho')
,& \text{ if } a_{\varrho} > -1
\\
\varrho,& \text{ else. } 
\end{cases}
$$
\end{definition}
In the situation of Definition \ref{def:antiCanRegion} let $Z'' \rightarrow Z$ be a toric ambient resolution of singularities
factorizing over the weakly tropical resolution $Z' \rightarrow Z$. 
Then the exceptional divisors of $X''\rightarrow X$ are precisely the pullbacks of the exceptional divisors of $Z''\rightarrow Z$ and we obtain the following:

\begin{remark}
\label{rem:anticanRegionSingTypes}
Let $X\subseteq Z$ be as in Definition \ref{def:antiCanRegion} and let $\mathcal{A}$ be the anticanonical region. 
Then the following statements hold:
\begin{enumerate}
    \item $X$ has at most log terminal singularities if and only if the anticanonical region $\mathcal{A}$ contains no ray.
    \item $X$ has at most canonical singularities if and only if for every ray $\varrho \subseteq |\Sigma'|$ we have $\varrho \cap \mathcal{A} \subseteq \conv(0, v_{\varrho})$, i.e.\ $||v_{\varrho}'|| \leq ||v_{\varrho}||$.
    \item $X$ has at most terminal singularities if and only if for every ray $\varrho \subseteq |\Sigma'|$ with $\varrho \notin \Sigma$ we have $\varrho \cap \mathcal{A} \subsetneq \conv(0, v_{\varrho})$, i.e.\ $||v_{\varrho}'|| < ||v_{\varrho}||$.
\end{enumerate}
\end{remark}

\begin{remark}
If $Z$ is a $\QQ$-Gorenstein toric variety
arising from a fan $\Sigma$, 
then for each $\sigma \in \Sigma$ there 
is a rational linear form $u_\sigma$ such 
that the anticanonical divisor of $Z$ is 
given on the affine chart $Z_\sigma \subseteq Z$ 
by $\div(\chi^{u_\sigma}) := m^{-1} \div(\chi^{mu})$, where $m > 0$ is any integer 
such that $mu$ lies in the dual lattice $M$ of $N$.
In particular, for the anticanonical region 
$\mathcal{A}$ of $Z$ we have 
$$
\mathcal{A} \cap \sigma 
\ = \ 
\{v \in \sigma; \ \langle u_\sigma, v\rangle \ge -1\}.
$$
This turns $\mathcal{A}$ into a polyhedral complex
and properties of $Z$ being log terminal, canonical 
or terminal become questions on boundedness and
behaviour with respect to lattice points of this 
polyhedral complex.
\begin{center}
\begin{tabular}{ccc}
\begin{tikzpicture}[scale=0.5]
\draw [color=gray!50]  [step=1] (-2,-2) grid (2,2);

\draw[thick, fill=gray!20, draw=black, opacity=0.6] 
(0,1) -- (1,2) -- (2,-1) --
(1,-1) -- (1,-2) -- (0,-1) -- 
(-1,-2) -- (-1,-1) -- (-2,-1) --
(-1,0) -- (-2,1) -- (-1,1) --
(-1,2) -- (0,1){};

\foreach \Point in {(-1,0), (0,1), (0,-1), (1,-1), (-1,1), (-1,-1), 
(1,2), (1,-2), (-2,1),
(-1,2), (2,-1), (-1,-2), (-2,-1)}{
    \node at \Point {\tiny \textbullet};}
    
    \node at (0,0) {\tiny \textbullet};
    \node at (0.3 , -0.2) {\tiny $0$};
\end{tikzpicture}
&
\begin{tikzpicture}[scale=0.5]
\draw [color=gray!50]  [step=1] (-2,-2) grid (2,2);

\draw[thick, fill=gray!20, draw=black, opacity=0.6] 
(0,1) -- (1,2)-- (1,0) -- (2,-1) --
(1,-1) -- (1,-2) -- (0,-1) -- 
(-1,-2) -- (-1,-1) -- (-2,-1) --
(-1,0) -- (-2,1) -- (-1,1) --
(-1,2) -- (0,1){};

\foreach \Point in {(-1,0), (0,1),(1,0), (0,-1),  (1,-1), (-1,1), (-1,-1), 
(1,2), (1,-2), (-2,1),
(-1,2), (2,-1), (-1,-2), (-2,-1)}{
    \node at \Point {\tiny \textbullet};}
    
    \node at (0,0) {\tiny \textbullet};
    \node at (0.3 , -0.2) {\tiny $0$};
\end{tikzpicture}
&
\begin{tikzpicture}[scale=0.5]
\draw [color=gray!50]  [step=1] (-2,-2) grid (2,2);

\draw[thick, fill=gray!20,opacity=0.6, draw=black] 
(0,1) -- (1,2) -- (1,1) -- (1,0) -- (2,-1) --
(1,-1) -- (1,-2) -- (0,-1) -- 
(-1,-2) -- (-1,-1) -- (-2,-1) --
(-1,0) -- (-2,1) -- (-1,1) --
(-1,2) -- (0,1){};

\foreach \Point in {(1,0), (-1,0), (0,1), (0,-1), (1,1), (1,-1), (-1,1), (-1,-1), 
(1,2), (1,-2), (-2,1),
(-1,2), (2,-1), (-1,-2), (-2,-1)}{
    \node at \Point {\tiny \textbullet};}
    
    \node at (0,0) {\tiny \textbullet};
    \node at (0.3 , -0.2) {\tiny $0$};
\end{tikzpicture}
\end{tabular}
\end{center}
In the pictures above, we look at the complete fan $\Sigma$ 
having the bullets different from the origin as its primitive 
ray generators. 
Then the shadowed areas indicate the anticanonical regions 
of a log terminal, a canonical and a terminal 
(hence smooth) projective toric surface $Z$ defined by $\Sigma$. 
Note that the polyhedral complexes drawn above are not convex, which implies that
the corresponding toric variety is not Fano. We will investigate this correlation in more
generality in Corollary \ref{cor:fanoConvex}.
\end{remark}

\begin{definition}
Let $X \subseteq Z$ be as in Definition \ref{def:antiCanRegion} and assume that the anticanonical region $\mathcal{A}$ can be endowed with the structure of a polyhedral complex.
In this situation we refer to the anticanonical region as the \emph{anticanonical complex}
and say that $X \subseteq Z$ 
\emph{admits an anticanonical complex}.
\end{definition}

\begin{remark}\label{rem:charAKK}
Let $X\subseteq Z$ admits an anticanonical complex.
Then Remark \ref{rem:anticanRegionSingTypes} specializes to the following:
\begin{enumerate}
    \item[(i')] $X$ has at most log terminal singularities if and only if $\mathcal{A}$ is bounded. 
\item[(ii')]
$X$ has at most canonical singularities
if and only if $0$ is the only 
lattice point in the relative interior of 
$\mathcal{A}$.
\item[(iii')]
$X$ has at most terminal singularities
if and only if $0$ and the primitive generators 
of the rays of the fan of $Z$ are the only 
lattice points of~$\mathcal{A}$.
\end{enumerate}
\end{remark}

Our main result of this section is a sufficient criterion on when an embedding $X\subseteq Z$ admits an anticanonical complex.
Let $X \subseteq Z$ be adapted. Then the pullback homomorphism on the level of Weil divisors described above induces a pullback homomorphism $\Cl(Z) \rightarrow \Cl(X)$ on the level of divisor class groups. We call the embedding $X \subseteq Z$ \emph{neat} if this homomorphism is an isomorphism.

\begin{proposition}\label{prop:AKKForLocallyToric}
Let $X \subseteq Z$ be a neat embedding
admitting a semi-locally toric weakly tropical resolution and assume there exists a $T$-invariant $\QQ$-Cartier divisor $D$ on $Z$ whose pullback $D|_X$ is a canonical divisor on $X$.
Then $X\subseteq Z$ admits an anticanonical complex.
\end{proposition}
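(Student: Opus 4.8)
The plan is to reduce the statement to the polyhedral description of discrepancies that is already available in the toric setting, transported to $X$ via the local product structure. First I would verify that the three conditions of Definition~\ref{def:antiCanRegion} are met, so that the anticanonical region $\mathcal{A}$ is well-defined: condition (iii) is exactly Proposition~\ref{prop:ambRes}, while conditions (i) and (ii) amount to checking that a weakly tropical resolution and any further toric morphism over it preserve adaptedness. For the latter, the key observation is that, by Lemma~\ref{lem:resolveLocallyToric}, the proper transform $X'$ is again semi-locally toric and $X' = \psi^{-1}(X)$, so each toric prime divisor $D^\varrho_{Z''}$ restricts to the preimage of a prime divisor under an isomorphism onto an open subset of $V(\sigma)$; hence $D^\varrho_{Z''}|_{X''}$ is prime. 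Neatness is used here to make sure $X' \subseteq Z'$ remains neat, in particular that these restricted divisors stay distinct in $\Cl(X')$.

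The heart of the argument is to compute the discrepancy $a_\varrho = \mathrm{disc}_X(D^\varrho_{Z''}|_{X''})$ in terms of the linear form $u_\sigma$ attached to the $T$-invariant $\QQ$-Cartier divisor $D$ with $D|_X = k_X$. Using the semi-locally toric structure, I would work on a maximal cone $\sigma \in \Sigma$ and its refinement $V(\sigma) \to U(\sigma)$: since $\pi_\sigma$ identifies $X_\sigma$ with an open subset of $U(\sigma)$ and the construction is compatible with the toric morphism $\psi(\sigma)$ (the commutative square in Lemma~\ref{lem:resolveLocallyToric}), the canonical divisor of $X$ pulls back exactly as the toric canonical divisor of $U(\sigma)$ does. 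Concretely, for a ray $\varrho$ with primitive generator $v_\varrho$ lying in such a $\sigma$, the toric discrepancy formula gives $a_\varrho = \langle u_\sigma, v_\varrho \rangle - 1 + (\text{something})$; matching this with the ramification formula on $X''$ and using $D|_X = k_X$ yields $a_\varrho + 1 = \langle u_\sigma, v_\varrho\rangle$ when $a_\varrho > -1$. Consequently $v'_\varrho = \frac{1}{a_\varrho+1} v_\varrho$ is precisely the point of $\varrho$ where the affine-linear function $\langle u_\sigma, -\rangle$ takes the value $-1$, so that on each cone
$$
\mathcal{A} \cap \sigma \ = \ \{ v \in \sigma \cap |\Sigma'| \ ;\ \langle u_\sigma, v \rangle \ge -1 \},
$$
exactly as in the toric model described in the remark preceding the definition of the anticanonical complex. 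I would need to check that the $u_\sigma$ glue correctly along shared faces, which follows from $D$ being $\QQ$-Cartier on $Z$, and that the formula is independent of which maximal cone $\tau \in \trop(X)$ containing $\sigma$ we chose, which follows since $U(\sigma)$ is independent of the choices by the structure theorem for tropical varieties.

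Once this local linear description is in hand, the anticanonical region $\mathcal{A}$ is cut out of the fan $\Sigma'$ (equivalently, out of $\trop(X)$) by the finitely many affine-linear inequalities $\langle u_\sigma, - \rangle \ge -1$, so it inherits the structure of a polyhedral complex; by definition, this says $X \subseteq Z$ admits an anticanonical complex. The main obstacle I anticipate is the discrepancy computation itself: one must carefully pass from the ramification formula on $X'' \to X$ to a purely toric computation on $V(\sigma) \to U(\sigma)$, using that $X'' \cap Z''_\sigma \cong \psi(\sigma)^{-1}(\pi_\sigma(X_\sigma))$ is an open subvariety of the smooth toric $V(\sigma)$ and that restriction of divisors commutes with this isomorphism. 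The neatness hypothesis is what guarantees the bookkeeping of divisor classes — and hence of the coefficients $a_E$ in the ramification formula — on $X$ matches that on $Z$, so that the toric answer transports verbatim.
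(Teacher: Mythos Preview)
Your overall architecture matches the paper's: verify the three conditions of Definition~\ref{def:antiCanRegion} (using Proposition~\ref{prop:ambRes} for (iii) and the semi-locally toric structure for (i), (ii)), compute the discrepancies $a_\varrho$ via the local product structure, and conclude that the anticanonical region is cut out of $\Sigma'$ by finitely many affine halfspaces. The paper does exactly this, invoking Lemmas~\ref{lem:weaklytoricIsAdapted} and~\ref{lem:LocallytToricCanPsiFamily} for adaptedness and Proposition~\ref{prop:applyACC} for the polyhedral description via Construction~\ref{constr:A}.

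However, there is a genuine gap in your discrepancy computation. You propose to read off $a_\varrho$ directly from the linear forms $u_\sigma$ that locally represent the given $\QQ$-Cartier divisor $D$ on $Z$. The ramification formula computes the coefficient of $E$ in $k_{X''}-\pi^*\pi_*k_{X''}$ for a \emph{single} choice of $k_{X''}$; you are implicitly using two different representatives at once --- the local toric canonical divisor $k_{Z''}\vert_{X''}$ for the first term and $\pi^*(D\vert_X)$ for the second --- without checking that $\pi_*(k_{Z''}\vert_{X''})$ actually agrees with $D\vert_X$ on the relevant chart. In general it does not: they differ by a principal divisor $\div(\chi^u)$, and this correction term is exactly what the paper packages into the notion of a \emph{toric canonical $\varphi$-family} (Definition~\ref{def:toricCanonPhiFamily}). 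The content of Lemma~\ref{lem:LocallytToricCanPsiFamily} is to produce, for each $\sigma'\in\Sigma'$, a $T''$-invariant divisor $C_{\sigma'}=D-\div(\chi^u)$ which simultaneously restricts to a canonical divisor on $X''$, equals $k_{Z''}$ on $Z''_{\sigma'}$, \emph{and} has $\QQ$-Cartier pushforward to $Z$. Only then does one obtain the clean formula $a_\varrho=-1-\langle u_{\sigma'},v_\varrho\rangle$ (note the sign; your $a_\varrho+1=\langle u_\sigma,v_\varrho\rangle$ is off, inconsistent with your own next sentence that $v'_\varrho$ sits on the level set $\langle u_\sigma,-\rangle=-1$). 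Correspondingly, the gluing of the pieces $A_{\sigma'}$ does not follow from $D$ being $\QQ$-Cartier on $Z$; it follows from the independence of $a_\varrho$ from the choice of family member (Remark~\ref{rem:Aindepoffam}). Your neatness argument for the class-group bookkeeping is in the right spirit, but the specific place it is needed is precisely in establishing $[k_{Z''}\vert_{Z''_{\sigma'}}]=[D\vert_{Z''_{\sigma'}}]$ in $\Cl(Z''_{\sigma'})$ via the isomorphism $\Cl(Z''_{\sigma'})\cong\Cl(X''\cap Z''_{\sigma'})$ (Lemma~\ref{lem:ClIsom}), which is what allows the character $\chi^u$ to exist.
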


The rest of this section 
is dedicated to the proof of the above result.
Starting with an adapted embedding $X \subseteq Z$
in a first step we explicitely construct a
polyhedral complex out of its weakly tropical resolution.
We then show under which conditions one can read discrepancies of $X$ off this complex.
In the second step we show that under the assumptions
of Proposition \ref{prop:AKKForLocallyToric} this complex
is indeed the anticanonical complex of $X \subseteq Z$.

\begin{remark}\label{rem:commDiagramm}
Let $X \subseteq Z$ be adapted and consider an adapted toric ambient modification $Z'\rightarrow Z$. Then there is a commutative diagram
$$
\xymatrix{
{\WDiv^{T'}(Z')}
\ar[r]
\ar[d]_{\varphi_*}
&
{\WDiv(X')}
\ar[d]^{\varphi_*}
\\
{\WDiv^T(Z)}
\ar[r]
&
{\WDiv(X)},
}
$$
where $T' \subseteq Z'$ is the acting torus of $Z'$,
the horizontal arrows are the pullback homomorphisms
defined above and the $\varphi_*$ are the usual birational transforms of Weil divisors via $\varphi$, i.e.\ for any prime divisor $D$ we have $\varphi_{*}(D):= \overline{\varphi(D)}$
if $\codim(\varphi(D)) = 1$ holds, and $0$ otherwise.
\end{remark}

For any toric variety $Z$, we denote by $k_Z$
the toric canonical divisor on $Z$ given as minus 
the sum over all toric prime divisors.
Here comes our main technical tool for the construction of the anticanonical complex:

\begin{definition}
\label{def:toricCanonPhiFamily}
Let $X \subseteq Z$ be adapted and
$\varphi \colon Z' \to Z$ an 
adapted toric ambient modification.
A \emph{toric canonical $\varphi$-family} 
is a family $(U_i,D_i)_{i \in I}$, 
where  the $U_i \subseteq Z'$ are toric open subsets 
covering $Z'$ and the $D_i$ are $T'$-invariant 
Weil divisors on $Z'$ such that for every $i \in I$ 
the following holds:
\begin{enumerate}
\item
$D_i \vert_{X'}$ is a canonical divisor on $X'$,
\item
on $U_i$ we have $D_i = k_{Z'}$,
\item
the $T$-invariant divisor $\varphi_*(D_i)$ is $\QQ$-Cartier.
\end{enumerate}
\end{definition}

\begin{remark}
Let $X \subseteq Z$ be adapted,
$\varphi \colon Z' \to Z$ an adapted 
toric modification and
$(U_i,D_i)_{i \in I}$ a toric canonical 
$\varphi$-family.
Then, by refining, we can achieve 
that $I = \Sigma'$ holds and the
$U_i = Z'_{\sigma'}$ are the affine toric
charts  of $Z'$. 
\end{remark}

Let $u \in M_\QQ$ be a rational character. Then the multiplicity of $\div(\chi^{u})$
along the divisor $D_Z^\varrho$ corresponding to a ray 
$\varrho \in \Sigma$ is given as $\langle u, v_\varrho \rangle$,
where, as usual, $v_\varrho \in \varrho$ denotes 
the primitive lattice vector inside~$\varrho$.

\begin{construction}
\label{constr:A}
Let $X \subseteq Z$ be adapted,
$\varphi \colon Z' \to Z$ an adapted 
weakly tropical resolution and 
$(Z'_{\sigma'},D_{\sigma'})_{\sigma' \in \Sigma'}$
a toric canonical $\varphi$-family.
For every $\sigma' \in \Sigma'$
choose a $\sigma \in \Sigma$ with 
$\sigma' \subseteq \sigma$
and a $u_{\sigma'} \in M_\QQ$ with 
$\varphi_* D_{\sigma'} = \div(\chi^{u_{\sigma'}})$ 
on $Z_\sigma \subseteq Z$.
Set
$$ 
\mathcal{A}
\ := \ 
\bigcup_{\sigma' \in\Sigma'} A_{\sigma '}  ,
\qquad\qquad
A_{\sigma '}
\ := \ 
\sigma' \cap \{v \in N_\QQ; \ \langle u_{\sigma'} , v \rangle \ge -1 \}.
$$
Then $\mathcal{A}$ admits the structure of a polyhedral complex in $N_\QQ$
by defining the cells to be the faces of the polyhedra 
$A_{\sigma '} \subseteq N_\QQ$.
\end{construction}

\begin{remark}\label{rem:Aindepoffam}
In the situation of Construction~\ref{constr:A},
consider a cone $\sigma' \in \Sigma'$,
a ray $\varrho \preceq \sigma'$,
the corresponding toric prime divisor
$D_{Z'}^{\varrho}$ and $D_{X'}^{\varrho} := D_{Z'}^{\varrho} \vert_{X'}$.
Then we have
$$ 
\varphi^*\varphi_* (D_{\sigma'}|_{X'})
=
\varphi^*((\varphi_* D_{\sigma'})|_X)
=
(\varphi^*\varphi_*D_{\sigma'})|_{X'},
$$
where the first equality is due to the commutative diagram given in Remark \ref{rem:commDiagramm} and the second follows by direct calculation in charts. 
In particular, the discrepancy of $D_{X'}^{\varrho}$
with respect to $X$ is given by 
$$ 
\mathrm{discr}_X(D_{X'}^\varrho)
\ = \ 
- 1 - \langle u_{\sigma'}, v_\varrho \rangle,
$$
as the r.h.s. is the multiplicity of 
$D_{\sigma'} - \varphi^*\varphi_* D_{\sigma'}$
along $D_{Z'}^\varrho$ for any $\sigma' \in \Sigma'$ 
with $\varrho \preceq \sigma'$.
In particular, we conclude that the defining inequalities
$u_{\sigma'} \geq -1$ of $\mathcal{A}$ and thus 
the whole set $\mathcal{A}$ 
do not depend on the choice of the toric canonical 
$\varphi$-family.
\end{remark}

\begin{definition}
Let $X \subseteq Z$ be adapted, 
$\varphi \colon Z' \rightarrow Z$
an adapted weakly tropical resolution
and consider a 
proper adapted toric ambient modification $\psi \colon Z'' \to Z'$.
A \emph{toric canonical $\psi$-family over $Z$} 
is a toric canonical $(\varphi \circ \psi)$-family 
$(V_i,C_i)_{i \in I}$ 
such that $V_i = \psi^{-1}(U_i)$ holds with toric open 
subsets $U_i \subseteq Z'$ and $(U_i, \psi_*C_i)_{i \in I}$
is a toric canonical $\varphi$-family.
\end{definition}

\begin{proposition}\label{prop:applyACC}
Situation as in Construction~\ref{constr:A}.
Let $\psi \colon Z'' \to Z'$ be a 
proper adapted toric ambient modification admitting a
toric canonical $\psi$-family over $Z$
and denote by $X'' \subseteq Z''$ the proper transform of $X \subseteq Z$.
Let $\varrho \in \Sigma''$ be a ray
and, provided $\varrho$ intersects the 
boundary of $\mathcal{A}$,
denote by $v_{\varrho}'$ the intersection 
point.
Then the discrepancy $a_{\varrho}$ 
of $X$ with respect to the divisor 
$D_{X''}^\varrho = D_{Z''}^\varrho \vert_{X''}$ 
satisfies
$$ 
a_{\varrho} 
\ = \ 
\frac{||v_{\varrho}||}{||v'_{\varrho}||}-1,
\quad\text{if }
\varrho \not \subseteq \mathcal{A},
\qquad\qquad  
a_{\varrho} \le -1,
\quad \text{if } \varrho \subseteq \mathcal{A}.
$$
\end{proposition}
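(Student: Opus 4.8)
The plan is to reduce both claims to the single identity $a_\varrho = -1 - \langle u_{\sigma'}, v_\varrho\rangle$, valid for every $\sigma' \in \Sigma'$ with $\varrho \subseteq \sigma'$, where $u_{\sigma'} \in M_\QQ$ is a character as in Construction~\ref{constr:A}. This is exactly the discrepancy formula underlying Remark~\ref{rem:Aindepoffam}, but extended from the rays of $\Sigma'$ to an arbitrary ray of the finer fan $\Sigma''$, which requires working with the composed modification $\varphi \circ \psi \colon Z'' \to Z$ in place of $\varphi$.

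To prove the identity, first note that $\varphi\circ\psi$ is again a proper adapted toric ambient modification, so Remark~\ref{rem:commDiagramm} applies to it. Refining the given toric canonical $\psi$-family over $Z$, I may assume it is of the form $(\psi^{-1}(Z'_{\sigma'}), C_{\sigma'})_{\sigma' \in \Sigma'}$, with $(Z'_{\sigma'}, \psi_* C_{\sigma'})_{\sigma' \in \Sigma'}$ a toric canonical $\varphi$-family. Fix $\varrho \in \Sigma''$ and choose $\sigma' \in \Sigma'$ with $\varrho \subseteq \sigma'$ and $\sigma \in \Sigma$ with $\sigma' \subseteq \sigma$. Then $Z''_\varrho \subseteq \psi^{-1}(Z'_{\sigma'})$, so $C_{\sigma'} = k_{Z''}$ near $D_{Z''}^\varrho$, and $(\varphi\circ\psi)_* C_{\sigma'} = \varphi_*(\psi_* C_{\sigma'})$ is $\QQ$-Cartier, hence equals $\div(\chi^{w})$ on $Z_\sigma$ for some $w \in M_\QQ$. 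Since $C_{\sigma'}|_{X''}$ is a canonical divisor on $X''$, restricting $C_{\sigma'} - (\varphi\circ\psi)^*(\varphi\circ\psi)_* C_{\sigma'}$ to the adapted $X'' \subseteq Z''$ and taking its multiplicity along $D_{X''}^\varrho$ — exactly as in the computation of Remark~\ref{rem:Aindepoffam}, using Remark~\ref{rem:commDiagramm} and adaptedness — gives $a_\varrho = -1 - \langle w, v_\varrho\rangle$. To turn $w$ into $u_{\sigma'}$: running Construction~\ref{constr:A} with the $\varphi$-family $(Z'_{\sigma'}, \psi_*C_{\sigma'})$ and the same cone $\sigma$ yields a character $\tilde u_{\sigma'} \equiv w \pmod{\sigma^\perp}$, so $\langle w, v_\varrho\rangle = \langle \tilde u_{\sigma'}, v_\varrho\rangle$; and by the independence assertion of Remark~\ref{rem:Aindepoffam}, $\tilde u_{\sigma'}$ and $u_{\sigma'}$ pair identically with every ray generator of the pointed cone $\sigma'$, hence agree on $\lin(\sigma') \ni v_\varrho$. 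As the resulting identity has intrinsic left-hand side $a_\varrho$, the pairing $\langle u_{\tau'}, v_\varrho\rangle$ is the same for all $\tau' \in \Sigma'$ containing $\varrho$.

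Finally, set $\lambda := \langle u_{\sigma'}, v_\varrho\rangle = -1 - a_\varrho$. If $\lambda \ge 0$ then $A_{\sigma'} \cap \varrho = \varrho$, so $\varrho \subseteq \mathcal{A}$ and $a_\varrho = -1 - \lambda \le -1$. If $\lambda < 0$ then, since $\langle u_{\tau'}, v_\varrho\rangle = \lambda$ for every $\tau' \in \Sigma'$ with $\varrho \subseteq \tau'$, we get $\mathcal{A} \cap \varrho = A_{\sigma'} \cap \varrho = \{t v_\varrho; \ 0 \le t \le -\lambda^{-1}\}$; thus $\varrho \not\subseteq \mathcal{A}$, the point where $\varrho$ crosses $\partial \mathcal{A}$ is $v'_\varrho = -\lambda^{-1} v_\varrho$, and $\frac{||v_\varrho||}{||v'_\varrho||} - 1 = -\lambda - 1 = a_\varrho$, which is the assertion.

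The step I expect to be the main obstacle is the comparison of the characters $w$, $\tilde u_{\sigma'}$ and $u_{\sigma'}$: it hinges on the fact that a toric canonical $\psi$-family over $Z$ is rigged precisely so that its $\varphi$-pushforward is a bona fide toric canonical $\varphi$-family, which is what lets Remark~\ref{rem:Aindepoffam} be applied; once this is in place, the multiplicity and restriction computations are routine and rely only on the adaptedness of $\varphi$, of $\psi$, and of $X'' \subseteq Z''$.
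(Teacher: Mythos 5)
Your proposal is correct and follows essentially the same route as the paper: both reduce to the identity $a_\varrho = -1 - \langle u_{\sigma'}, v_\varrho\rangle$ via the refined $\psi$-family over $Z$ and the ramification formula on $Z''_{\sigma'}$, then conclude by the case analysis on the sign of the pairing. The only cosmetic difference is that the paper invokes Remark~\ref{rem:Aindepoffam} to simply take $D_{\sigma'} = \psi_* C_{\sigma'}$ from the outset, whereas you keep both characters and check they agree on $\lin(\sigma')$ -- the same point, spelled out slightly more.
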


\begin{proof}
Denote the toric canonical $\psi$-family over 
$Z$ by $(V_i,C_i)_{i \in I}$.
Refining if necessary, we achieve $I = \Sigma'$ 
and $V_{\sigma'}= Z''_{\sigma'} := \psi^{-1}(Z'_{\sigma'})$.
By Remark~\ref{rem:Aindepoffam},
we may assume $D_{\sigma'} = \psi_* C_{\sigma'}$ 
for constructing the polyhedral complex
$\mathcal{A}$ according to~\ref{constr:A}.
Now choose $\sigma' \in \Sigma'$ and 
$\sigma \in \Sigma$ with 
$\varrho \subseteq \sigma' \subseteq \sigma$.
Moreover, let $u_{\sigma'} \in M_\QQ$ with 
$\varphi_* D_{\sigma'} = \div(\chi^{u_{\sigma'}})$ 
on $Z_\sigma \subseteq Z$.
Set $\pi := \varphi \circ \psi$.
Then, on $Z''_{\sigma'}$, we have 
$$
C_{\sigma'} - \pi^*\pi_* C_{\sigma'}
\ = \ 
\sum_{\eta \subseteq \sigma'} -D^\eta_{Z''} - \pi^* \div(\chi^{u_{\sigma'}})
\ = \ 
\sum_{\eta \subseteq \sigma'} (-1 - \langle u_{\sigma'}, v_\eta \rangle)D^\eta_{Z''},
$$
where $\eta$ runs over the rays of $\Sigma''$ that lie in 
the cone $\sigma'$.
Thus, in particular, our~$\varrho$ occurs among the $\eta$.
Now, applying the pullback homomorphism $D \mapsto D \vert_{X''}$
to these identities gives the ramification formula for a 
canonical divisor on $X''$. 
Thus, if $\varrho \not\subseteq \mathcal{A}$ holds,
then we obtain 
$$ 
a_\varrho
\ = \ 
-1 - \langle u_{\sigma'}, v_\varrho \rangle
\ = \ 
-1 - \frac{||v_{\varrho}||}{||v'_{\varrho}||}
\langle u_{\sigma'}, v_\varrho' \rangle
\ = \ 
-1 + \frac{||v_{\varrho}||}{||v'_{\varrho}||},
$$
using $\langle u_{\sigma'}, v_\varrho' \rangle = -1$,
which just rephrases that $v_\varrho'$ lies on the 
bounding hyperplane $u_{\sigma'} = -1$ of 
$\mathcal{A}$.
If $\varrho \subseteq \mathcal{A}$ holds,
then we have $\langle u_{\sigma'}, v \rangle \ge -1$
even for all $v \in \varrho$.
\end{proof}

Let $X \subseteq Z$ be adapted
with adapted weakly tropical resolution 
$Z' \rightarrow Z$. 
The above result shows that 
the polyhedral complex $\mathcal{A}$ as in Construction \ref{constr:A} is the anticanonical complex of $X \subseteq Z$
if every proper birational toric morphism $\psi\colon Z'' \to Z'$ 
is an adapted toric ambient modification and admits a canonical toric 
$\psi$-family over $Z$.
We show that any subvariety $X \subseteq Z$ meeting the assumptions of Proposition \ref{prop:AKKForLocallyToric}
fulfills this condition.

\begin{lemma}
\label{lem:weaklytoricIsAdapted}
Let $X \subseteq Z$ be an adapted embedding admitting a semi-locally toric weakly tropical resolution $\varphi \colon Z' \rightarrow Z$. Then $\varphi$ is an adapted toric ambient modification.
Moreover, if $X \subseteq Z$ is a neat embedding,
then $X' \subseteq Z'$ is~neat. 
\end{lemma}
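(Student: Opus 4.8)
The plan is to verify the two assertions of Lemma~\ref{lem:weaklytoricIsAdapted} separately, reducing both to the local product structure furnished by Construction~\ref{constr:decomp} and the semi-local toricness hypothesis. First I would recall that by Lemma~\ref{lem:weaklyTropicalIsProper} the weakly tropical resolution $\varphi\colon Z'\to Z$ is already a toric ambient modification, so the only thing left for the first claim is that $X'\subseteq Z'$ is adapted, i.e.\ that $X'$ is normal and that for every $T'$-invariant prime divisor $D'$ on $Z'$ the pullback $D'|_{X'}$ is again prime on $X'$. Normality of $X'$ is immediate from semi-local toricness: each chart $X'_{\sigma'}=X'\cap Z'_{\sigma'}$ is isomorphic, via the projection $\pi'_{\sigma'}$ of Construction~\ref{constr:decomp}, to an open subvariety of the normal affine toric variety $U(\sigma')$, hence is normal, and normality is local.

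For the primality of pullbacks I would argue chart by chart. A $T'$-invariant prime divisor on $Z'$ is of the form $D_{Z'}^{\varrho}$ for a ray $\varrho\in\Sigma'$. Fix a maximal cone $\sigma'\in\Sigma'$ with $\varrho\preceq\sigma'$ and use the decomposition $N=N(\sigma')\oplus\tilde N$ giving $Z'_{\sigma'}\cong U(\sigma')\times\tilde\TT$ together with the isomorphism $\pi'_{\sigma'}\colon X'_{\sigma'}\xrightarrow{\ \sim\ }W\subseteq U(\sigma')$ onto an open subset. Under this isomorphism $D_{Z'}^{\varrho}\cap Z'_{\sigma'}$ is the preimage under $\pi'_{\sigma'}$ of the toric divisor of $U(\sigma')$ corresponding to $\varrho$ (a toric prime divisor on an affine toric variety, hence irreducible), intersected with $W$; pulling back an irreducible divisor under an isomorphism onto an open set yields an irreducible (possibly empty) divisor. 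Running over the maximal cones covering $Z'$, we see that $D_{Z'}^{\varrho}|_{X'}$ is locally irreducible, and since $X'$ is irreducible its closure in $X'$ is a prime divisor — here one has to note that $D_{Z'}^\varrho$ does meet $X'$, which follows from Tevelev's criterion because $\varrho^{\circ}\subseteq|\Sigma'|\subseteq\trop(X)=\trop(X')$. One subtlety worth spelling out is compatibility with the ``close up components'' definition of the pullback $\WDiv^{T'}(Z')\to\WDiv(X')$: the open chart pullback agrees with the restriction of the global pullback because $X'_0:=X'\cap Z'_0$ has codimension at least two complement in $X'$ (this is inherited from the adaptedness of $X\subseteq Z$ together with the fact that $\varphi$ only modifies over the big torus), so the divisor computed chartwise and the divisor obtained by restricting to $Z'_0$ and extending agree.

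For the second statement, assume $X\subseteq Z$ is neat, i.e.\ the pullback $\Cl(Z)\to\Cl(X)$ is an isomorphism; I want the same for $X'\subseteq Z'$. The clean way is to compare with the downstairs situation via the commutative square of Remark~\ref{rem:commDiagramm}. The birational transform $\varphi_*\colon\WDiv^{T'}(Z')\to\WDiv^T(Z)$ induces on class groups the identification coming from the fact that $\varphi$ only adds finitely many exceptional toric divisors; concretely there is a short exact sequence $0\to\bigoplus_{\varrho\in\Sigma'^{(1)}\setminus\Sigma^{(1)}}\ZZ\,D_{Z'}^\varrho\to\Cl(Z')\to\Cl(Z)\to 0$, and an entirely parallel one for $X'$ and $X$ whose exceptional part is generated by the $D_{X'}^\varrho=D_{Z'}^\varrho|_{X'}$ — these are prime by the first part of the lemma, and they are in bijection with the exceptional toric rays because each meets $X'$ by Tevelev's criterion. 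The pullback maps $\Cl(Z')\to\Cl(X')$ and $\Cl(Z)\to\Cl(X)$ fit into a morphism of these two exact sequences; the map on exceptional summands sends $D_{Z'}^\varrho\mapsto D_{X'}^\varrho$ and is thus an isomorphism, the map on the $\Cl(Z)$, $\Cl(X)$ side is an isomorphism by hypothesis, so by the five lemma the middle map $\Cl(Z')\to\Cl(X')$ is an isomorphism, i.e.\ $X'\subseteq Z'$ is neat.

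The main obstacle I anticipate is the bookkeeping in the first part: making rigorous that the chartwise computation of $D_{Z'}^\varrho|_{X'}$ via the toric projection $\pi'_{\sigma'}$ genuinely computes the globally defined pullback (restrict to $Z'_0$, then extend by closure), and that the pieces glue to an honest prime divisor rather than merely a locally irreducible cycle. This hinges on two points that must be checked carefully: that $X'\setminus X'_0$ has codimension $\ge 2$ in $X'$ (so that ``extend by closure'' is unambiguous and commutes with restriction to charts), and that $X'$ is irreducible (immediate, since $X'$ is by definition the closure of $\varphi^{-1}(X\cap T)$ and $X\cap T$ is irreducible). Everything else is a routine transfer of well-known toric facts across the isomorphisms $\pi'_{\sigma'}$ supplied by semi-local toricness.
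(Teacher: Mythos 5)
Your overall strategy coincides with the paper's: primality of the pullbacks is checked on toric charts via the semi-locally toric projections, and neatness is transferred by the five lemma applied to the two localization exact sequences whose kernels are spanned by the exceptional divisors $E_i$ and $E_i|_{X'}$. The explicit verification of normality of $X'$ (which adaptedness requires) is a welcome addition that the paper leaves implicit.

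The one step whose justification does not hold as written is the passage from ``chartwise irreducible'' to ``prime''. A closed subset of an irreducible variety whose trace on each chart of an open cover is irreducible need not be irreducible (two points of $\PP^1$ lying in different charts of the standard cover); irreducibility of $X'$ buys you nothing here. The correct repair --- essentially what the paper does by working on the chart $Z'_\varrho$ of the ray itself --- is to note that no gluing is needed: the codimension-one orbit $O_\varrho\subseteq Z'$ attached to $\varrho$ is entirely contained in the \emph{single} chart $Z'_{\sigma'}$ for any cone $\sigma'\in\Sigma'$ with $\varrho\preceq\sigma'$, because orbits of $Z'_{\sigma'}$ correspond to faces of $\sigma'$. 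Since the pullback is computed on $Z'_0$ and then closed up, its support is $\overline{X'\cap O_\varrho}$, and $X'\cap O_\varrho=X'_{\sigma'}\cap O_\varrho$ is irreducible by your one-chart computation; the closure of an irreducible set is irreducible, so the divisor is prime. (Nonemptiness via Tevelev's criterion you have correctly.) Relatedly, the codimension-$\ge 2$ claim for $X'\setminus X'_0$ is not ``inherited from the adaptedness of $X\subseteq Z$''; it follows from the weakly tropical property of $X'\subseteq Z'$ together with the purity statement in Tevelev's criterion, which gives $\dim\bigl(X'\cap T'\cdot z_{\sigma'}\bigr)=\dim X'-\dim\sigma'\le\dim X'-2$ for every cone $\sigma'$ of dimension at least two. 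With these two repairs the argument is complete and agrees with the paper's proof.
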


\begin{proof}
Note that the complement
$X' \setminus (X'\cap Z_0')$
lies in the union of all $T'$-orbits of
$Z'$ of codimension at least two.
As $X' \subseteq Z'$ is weakly tropical,
Tevelev's criterion implies that $X'\cap Z'_0$ is of codimension at least two
in $X'$. Thus we have a well defined pullback homomorphism $\WDiv^{T'}(Z') \rightarrow \WDiv(X')$.

Now let $D_{Z'}^{\varrho}$ be a $T'$-invariant prime divisor on $Z'$ corresponding to a ray $\varrho \in \Sigma'$.
We claim that the pullback of this divisor 
is a prime divisor on $X'$. 
In order to prove this we may restrict $D_{Z'}^{\varrho}$ to the toric chart $Z'_{\varrho} \cong U(\varrho) \times \tilde{\TT}$ 
as the complement of $X'\cap Z_0'$ is of codimension at least two in $X'$.
Note that the restriction $D^{\varrho}_{Z'}|_{Z'_{\varrho}}$ equals the pullback of the $T'$-invariant divisor $D^{\varrho}_{U(\varrho)}$ on $U(\varrho)$ with respect to the projection.
As $U(\varrho)$ and its preimage under the projection $U(\varrho)\times\tilde{\TT}$ are smooth,
the pullback of $D^\varrho_{Z'}$ to $X'$ equals the intersection of $D^\varrho_{U(\varrho)}$ with $X_\varrho$ inside $U(\varrho)$ and thus is a prime divisor.

For the supplement let $X \subseteq Z$ be neat. 
In order to prove that the pullback induces an isomorphism of divisor class groups, we may assume 
 $Z=Z_0$ and $Z' =Z_0'$
due to the adaptedness of $X \subseteq Z$ and $X' \subseteq Z'$.
In particular, we have a proper toric morphism 
$Z' \rightarrow Z$.
Let $E_1, \ldots, E_r$ be the $T'$-invariant prime divisors in the exceptional locus 
of $Z' \rightarrow Z$. As the embedding $X' \subseteq Z'$ is weakly tropical
and adapted we conclude that the prime divisors in the exceptional locus of
$X' \rightarrow X$  are exactly the pullbacks
$E_1|_{X'}, \ldots, E_r|_{X'}$, where we use Tevelev's criterion to show that these are indeed all. 
Note that these divisors generate free 
subgroups of rank $r$ in $\Cl(Z')$ and $\Cl(X')$ respectively. 
Thus we obtain the following commutative diagram with exact rows

\begin{center}
\begin{tikzcd}
0\arrow[r]&
\bigoplus \ZZ \cdot [E_i]\arrow[d,"\cong"']\arrow[r]&
\Cl(Z')\arrow[d]\arrow[r,"\tilde{\varphi}_*"]&
\Cl(Z)\arrow[d,"\cong"]\arrow[r]&
0\\
0\arrow[r]&
\bigoplus \ZZ \cdot[E_i|_{X'}]\arrow[r]&
\Cl(X')\arrow[r,"\varphi_*"']&
\Cl(X)\arrow[r]&
0,
\end{tikzcd}
\end{center}
where the downward arrows are the pullback homomorphisms, and $\tilde{\varphi}_*$ and $\varphi_*$ denote the canonical push forward homomorphisms.
Applying the Five Lemma we
obtain that the pullback homomorphisms
$\Cl(Z') \rightarrow \Cl(X')$ 
induced by the embedding
$X' \subseteq Z'$
is an isomorphism. 
\end{proof}

\begin{lemma}
\label{lem:ClIsom}
Let $X \subseteq Z$ be a neat, weakly tropical embedding and let 
$U \subseteq Z$ be an open $T$-invariant subvariety. 
Then the pullback homomorphism $\Cl(U) \rightarrow \Cl(X \cap U)$ induced by the embedding $X \cap U \subseteq U$ is an isomorphism. 

\begin{proof}
As $U$ is a $T$-invariant subset of $Z$ and the embedding $X \subseteq Z$ is neat, we have
$$Z \setminus U = D_Z^{\varrho_1} \cup \ldots \cup D_Z^{\varrho_r} \cup B
\quad
\text{and}
\quad
X \setminus (X \cap U) = D_X^{\varrho_1} \cup \ldots \cup D_X^{\varrho_r} \cup (B \cap X)
$$
with $T$-invariant prime divisors $ D_Z^{\varrho_i} \subseteq Z$ and
a $T$-invariant closed subset $B \subseteq Z$ of codimension at least two.
Moreover, as $X \subseteq Z$ is weakly tropical,
we have $\mathrm{codim}_X(B\cap X)\geq 2$.
Thus the following commutative diagram with exact rows
gives the assertion:
\begin{center}
\begin{tikzcd}
\ZZ^r\arrow[d, equal]\arrow[r,"{e_i \mapsto [D_Z^{\varrho_i}]}"]&[20pt]
\Cl(Z)\arrow[d,"\cong"]\arrow[r]&
\Cl(U)\arrow[d]\arrow[r]&
0\\
\ZZ^r \arrow[r,"{e_i \mapsto [D_X^{\varrho_i}]}"']&
\Cl(X)\arrow[r]&
\Cl(X\cap U)\arrow[r]&
0.
\end{tikzcd}
\end{center}
\end{proof}
\end{lemma}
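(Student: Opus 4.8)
The plan is to realize both $\Cl(U)$ and $\Cl(X\cap U)$ as explicit cokernels via the excision sequence for divisor class groups and then compare the two sequences through a commutative ladder, using neatness to control the middle term.

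First I would analyse $Z\setminus U$ combinatorially. Being open and $T$-invariant, $U$ has complement a finite union of torus orbit closures; let $D_Z^{\varrho_1},\dots,D_Z^{\varrho_r}$ be those among them which are divisors (equivalently, the $\varrho_i\in\Sigma$ are precisely the rays with $T\cdot z_{\varrho_i}\subseteq Z\setminus U$), and let $B$ be the union of the remaining orbit closures, so that $B$ is closed, $T$-invariant, contained in $Z\setminus Z_0$, and of codimension at least two. Then $Z\setminus U=D_Z^{\varrho_1}\cup\dots\cup D_Z^{\varrho_r}\cup B$, and since deleting the codimension $\ge 2$ set $B$ does not change the class group while deleting the prime divisors $D_Z^{\varrho_i}$ divides out their classes, I obtain an exact sequence $\ZZ^r\to\Cl(Z)\to\Cl(U)\to 0$ sending $e_i\mapsto[D_Z^{\varrho_i}]$.

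Next I would transport this picture to $X$. Intersecting with $X$ gives $X\setminus(X\cap U)=(X\cap D_Z^{\varrho_1})\cup\dots\cup(X\cap D_Z^{\varrho_r})\cup(X\cap B)$. By adaptedness of $X\subseteq Z$ each $D_X^{\varrho_i}:=D_Z^{\varrho_i}|_X$ is a prime divisor on $X$, and it coincides with $X\cap D_Z^{\varrho_i}$ off the set $X\setminus X_0$. Weak tropicality enters here: by Tevelev's criterion $X\cap(T\cdot z_\sigma)$ is pure of dimension $\dim(X)-\dim(\sigma)$ for every $\sigma\in\Sigma$, so running over all cones with $\dim(\sigma)\ge 2$ shows $\codim_X(X\setminus X_0)\ge 2$, and hence also $\codim_X(X\cap B)\ge 2$. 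Consequently the divisorial part of $X\setminus(X\cap U)$ is exactly $D_X^{\varrho_1}\cup\dots\cup D_X^{\varrho_r}$, the remainder having codimension at least two, and the same excision argument yields an exact sequence $\ZZ^r\to\Cl(X)\to\Cl(X\cap U)\to 0$ with $e_i\mapsto[D_X^{\varrho_i}]$.

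Finally I would assemble the commutative ladder built from these two exact rows, where the vertical maps are the pullback homomorphisms: the left vertical $\ZZ^r\to\ZZ^r$ is the identity, because the pullback sends $[D_Z^{\varrho_i}]$ to $[D_X^{\varrho_i}]$ by the very definition of the adapted pullback; the middle vertical $\Cl(Z)\to\Cl(X)$ is an isomorphism since $X\subseteq Z$ is neat; the right square commutes by compatibility of pullback with restriction to the open pieces; and the right vertical is exactly the map $\Cl(U)\to\Cl(X\cap U)$ in question. A short diagram chase (the relevant half of the Five Lemma: surjectivity of the left vertical together with bijectivity of the middle one) then forces $\Cl(U)\to\Cl(X\cap U)$ to be an isomorphism. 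I expect the only delicate point to be the second step: one must ensure that restricting to $X$ introduces no unexpected divisorial components in the complement and keeps the non-divisorial part in codimension $\ge 2$, which is precisely what weak tropicality, through Tevelev's criterion, guarantees.
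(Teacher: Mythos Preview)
Your proposal is correct and follows essentially the same approach as the paper: decompose the complement of $U$ into toric prime divisors plus a codimension-two piece, use weak tropicality (via Tevelev's criterion) to ensure the non-divisorial part stays in codimension at least two in $X$, and then compare the two excision sequences through a commutative ladder whose middle vertical is an isomorphism by neatness. The paper's proof is just a terser version of exactly this argument.
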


\begin{lemma}
\label{lem:LocallytToricCanPsiFamily}
Let $X \subseteq Z$ be a neat embedding and 
let $\varphi \colon Z' \rightarrow Z$ be 
a semi-locally toric weakly tropical resolution such that there exists a $T$-invariant $\QQ$-Cartier divisor $D$ with $D|_{X}$ a canonical divisor on $X$.
Consider
a proper toric morphism $\psi\colon Z''\rightarrow Z'$ defined by a subdivision of fans $\Sigma''\rightarrow \Sigma'$.
Then the following statements~hold:
\begin{enumerate}
\item 
The embedding $X''\subseteq Z''$ is neat.
\item
$\psi \colon Z'' \rightarrow Z'$ is an adapted toric ambient modification and there exists a toric canonical $\psi$-family
over $Z$.
\end{enumerate}
\end{lemma}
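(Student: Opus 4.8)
The plan is to prove the two assertions in sequence, reducing each to facts already established in the excerpt. For part (i), the neatness of $X'' \subseteq Z''$, I would first note that $\psi \colon Z'' \to Z'$ is a proper birational toric morphism coming from a subdivision, and that by Lemma~\ref{lem:resolveLocallyToric} (applied to the semi-locally toric weakly tropical embedding $X' \subseteq Z'$, which is available by Lemma~\ref{lem:weaklytoricIsAdapted}) the proper transform $X''$ equals $\psi^{-1}(X')$ and the embedding $X'' \subseteq Z''$ is again weakly tropical and semi-locally toric. Then I would invoke Lemma~\ref{lem:weaklytoricIsAdapted} once more, now for the composition $\varphi \circ \psi \colon Z'' \to Z$: since $X \subseteq Z$ is neat and $\varphi$ is a semi-locally toric weakly tropical resolution, that lemma gives $X' \subseteq Z'$ neat, and the same argument applied to the semi-locally toric weakly tropical embedding $X' \subseteq Z'$ together with the proper toric morphism $\psi$ (using Lemma~\ref{lem:ClIsom} to pass between open $T$-invariant charts if needed) yields $X'' \subseteq Z''$ neat. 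The key point is that the exceptional prime divisors of $X'' \to X'$ are exactly the pullbacks of the exceptional toric divisors of $Z'' \to Z'$, by Tevelev's criterion and adaptedness, so the Five Lemma argument from the proof of Lemma~\ref{lem:weaklytoricIsAdapted} applies verbatim.

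For part (ii), I would first observe that $\psi$ is adapted: $X'' \subseteq Z''$ is weakly tropical and semi-locally toric by Lemma~\ref{lem:resolveLocallyToric}~(ii), hence adapted by the prime-divisor argument already run in the proof of Lemma~\ref{lem:weaklytoricIsAdapted} (restrict a toric prime divisor $D_{Z''}^\varrho$ to the chart $Z''_\varrho \cong U(\varrho) \times \tilde\TT$, where everything is smooth, and conclude the pullback to $X''$ is prime). It remains to construct a toric canonical $\psi$-family over $Z$, i.e.\ a family $(V_{\sigma''}, C_{\sigma''})$ with $V_{\sigma''} = Z''_{\sigma''}$ the affine charts, $C_{\sigma''}|_{X''}$ canonical on $X''$, $C_{\sigma''} = k_{Z''}$ on $V_{\sigma''}$, such that $(\psi^{-1}(U_{\sigma'}), \psi_*C_{\sigma''})$ refines a toric canonical $\varphi$-family. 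The natural candidate is $C_{\sigma''} := k_{Z''} + \pi^* \div(\chi^{-u_{\sigma''}})$ where $\pi = \varphi \circ \psi$ and $u_{\sigma''} \in M_\QQ$ is chosen so that $\pi_* C_{\sigma''} = D + (\text{principal})$ agrees with the given $\QQ$-Cartier divisor $D$ whose restriction to $X$ is canonical; more precisely one picks, for each maximal cone $\sigma \in \Sigma$, the linear form $u_\sigma$ with $D = \div(\chi^{u_\sigma})$ on $Z_\sigma$ (possible since $D$ is $\QQ$-Cartier), and sets $C_{\sigma''} := k_{Z''} - \pi^*(D - \div(\chi^{u_\sigma}))$ restricted appropriately. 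Since $D|_X$ is a canonical divisor, pulling this back to $X''$ via the smooth local product structure (Lemma~\ref{lem:resolveLocallyToric}) gives a canonical divisor on $X''$: the neatness and the commutative diagram of Remark~\ref{rem:commDiagramm}, together with the chart computation $\varphi^*\varphi_*(D_{\sigma'}|_{X'}) = (\varphi^*\varphi_* D_{\sigma'})|_{X'}$ from Remark~\ref{rem:Aindepoffam}, show that $C_{\sigma''}|_{X''}$ differs from $k_{X''}$ by a principal divisor.

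The two conditions on $C_{\sigma''}$ beyond being canonical are then straightforward: on the affine chart $Z''_{\sigma''}$ one has $\pi^*\div(\chi^{-u_\sigma}) = \div(\chi^{-u_\sigma})$, which is a principal toric modification, so after adjusting the character we may arrange $C_{\sigma''} = k_{Z''}$ on $V_{\sigma''}$ exactly as in the toric-chart normalization; and $\pi_* C_{\sigma''}$ is $\QQ$-Cartier because it equals $D$ up to a principal divisor and $D$ was assumed $\QQ$-Cartier. Finally the compatibility $(\psi^{-1}(U_{\sigma'}), \psi_* C_{\sigma''})$ being a toric canonical $\varphi$-family follows since $\psi_*$ of a divisor that equals $k_{Z''}$ on a $\psi$-chart equals $k_{Z'}$ on the corresponding $Z'$-chart (proper transforms of toric canonical divisors), and $\varphi_* \psi_* C_{\sigma''} = \pi_* C_{\sigma''}$ is $\QQ$-Cartier, while its restriction to $X'$ is canonical by the same pullback computation. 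The main obstacle I anticipate is the bookkeeping in showing $C_{\sigma''}|_{X''}$ is genuinely a \emph{canonical} divisor on $X''$ rather than merely $\QQ$-linearly equivalent to one over a dense open set — this is where neatness (so that $\Cl$ pullbacks are isomorphisms, via part (i) and Lemma~\ref{lem:ClIsom}) and the precise chartwise identities of Remark~\ref{rem:Aindepoffam} must be combined carefully, together with the semi-locally toric product structure to reduce the pullback to the smooth factor $U(\sigma'')$.
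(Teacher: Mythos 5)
Your treatment of part (i) is essentially the paper's argument: the paper refines the quasifan structure on $\trop(X)$ so that $\varphi\circ\psi$ becomes itself a semi-locally toric weakly tropical resolution of $X$ and then quotes Lemma~\ref{lem:weaklytoricIsAdapted} directly, whereas you rerun the prime-divisor and Five Lemma arguments for $\psi$ after invoking Lemma~\ref{lem:resolveLocallyToric}; both routes are fine.

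Part (ii), however, has a genuine gap in the construction of the toric canonical $\psi$-family, and it sits exactly at the point you dismiss as ``bookkeeping.'' Your candidate divisors all have the form $C_{\sigma''}=k_{Z''}+(\text{principal})$, so condition (i) of Definition~\ref{def:toricCanonPhiFamily} would force $[k_{Z''}|_{X''}]$ to equal the canonical class of $X''$ globally. This is false in general (for a quadric threefold $X\subseteq\PP^4$ one has $k_{\PP^4}|_X=-5H$ versus $k_X=-3H$; the adjunction defect persists on the resolution), and no adjustment by a character can repair a wrong divisor class. The same defect kills condition (iii): with your formula one computes $\pi_*C_{\sigma''}=k_Z-D+\div(\chi^{u_\sigma})$, which is neither ``$D$ up to a principal divisor'' as you assert nor $\QQ$-Cartier in general, since $k_Z$ is $\QQ$-Cartier only when $Z$ is $\QQ$-Gorenstein, which is not assumed. (There is also the minor issue that $u_\sigma$ is only a rational linear form, so your $C_{\sigma''}$ is a $\QQ$-divisor, while the definition asks for Weil divisors restricting to genuine canonical divisors.) The paper's construction runs in the opposite direction: by neatness of $X''\subseteq Z''$ (part (i)) one first chooses a $T''$-invariant divisor $D''$ on $Z''$ whose restriction to $X''$ \emph{is} a canonical divisor; the semi-locally toric chart structure shows that $k_{Z''}|_{X''\cap Z''_{\sigma'}}$ is a canonical divisor of that open piece, so $D''|$ and $k_{Z''}|$ are linearly equivalent on $X''\cap Z''_{\sigma'}$, and Lemma~\ref{lem:ClIsom} lifts this to $D''=k_{Z''}+\div(\chi^u)$ on $Z''_{\sigma'}$; one then sets $C_{\sigma'}:=D''-\div(\chi^u)$. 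Thus the principal correction is applied to a divisor with the \emph{correct global class} to make it locally toric-canonical, not to $k_{Z''}$ to fix its class. You would need to restructure your construction along these lines; as written it cannot be completed.
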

\begin{proof}
We prove (i).
Note that by sufficiently refining the quasifan structure on $\trop(X)$ we may assume 
that the toric morphism
$\varphi \circ \psi \colon Z''\rightarrow Z$ 
arises from a refinement of fans
$\Sigma \sqcap \trop(X) \rightarrow \Sigma$.
It thus defines a weakly tropical resolution 
$Z'' \rightarrow Z$ of $X$ which is semi-locally toric according to Lemma~\ref{lem:resolveLocallyToric}. Applying Lemma~\ref{lem:weaklytoricIsAdapted} we conclude
that $X'' \subseteq Z''$ is a neat embedding.

We come to (ii).
As $\psi\colon Z'' \rightarrow Z'$ is a proper morphism, so is its restriction $X''\rightarrow X'$. In particular, using (i) we obtain that $\psi$ is an adapted toric ambient modification and there exists a $T''$-invariant divisor $D$ on $Z''$ whose pullback $D|_{X''}$ is a canonical divisor on $X''$.
We proceed the proof by constructing a toric canonical $\psi$-family over $Z$.
Let $\sigma' \in \Sigma'$ be any cone. Then, in the notation of Lemma~\ref{lem:resolveLocallyToric}, 
the projection of $X'' \cap Z''_{\sigma'}$ into $V(\sigma')$ is an open subset of $V(\sigma')$. Thus there exists a canonical divisor 
on $X''$ that equals $k_{Z''}|_{X''}$ on $X'' \cap Z''_{\sigma'}$. 
Applying Lemma \ref{lem:ClIsom}
we obtain $[k_{Z''}|_{Z''_{\sigma'}}] = [D|_{Z''_{\sigma'}}] \in \Cl(Z''_{\sigma'})$ and
thus on $Z''_{\sigma'}$
we have
$$
D = k_{Z''} + \mathrm{div}(\chi^u)
$$
with a character $\chi^u$ of $T''$. 
Setting $C_{\sigma'} := D - \mathrm{div}(\chi^{u})$ we obtain a toric canonical $(\varphi \circ \psi)$-family
$(Z''_{\sigma'}, C_{\sigma'})_{\sigma' \in \Sigma'}$.
Due to Lemma \ref{lem:weaklytoricIsAdapted}
the morphism $\varphi$ is an adapted toric ambient modification.
Moreover, by construction $Z''_{\sigma'} = \psi^{-1}(Z'_{\sigma'})$ holds 
and the family $(Z'_{\sigma'}, \psi_*C_{\sigma'})$
is a toric canonical $\varphi$-family.
This proves that
$(Z''_{\sigma'}, C_{\sigma'})_{\sigma' \in \Sigma'}$
is a toric canonical $\psi$-family over $Z$.
\end{proof}

\begin{proof}[Proof of Proposition \ref{prop:AKKForLocallyToric}]
Let $\varphi \colon Z' \rightarrow Z$ be any semi-locally toric weakly tropical resolution. Then $\varphi$ is an adapted toric ambient modification due to Lemma \ref{lem:weaklytoricIsAdapted}.
Now let $\psi \colon Z'' \rightarrow Z'$
be any proper birational toric morphism. Then due to
Lemma~\ref{lem:LocallytToricCanPsiFamily}~(ii) it is
an adapted toric ambient modification and admits a canonical toric $\psi$-family over $Z$. Moreover, Proposition \ref{prop:ambRes} ensures that there exists at least one proper birational toric morphism that induces a resolution of singularities. Now, using Proposition \ref{prop:applyACC} we conclude 
that the support of the polyhedral complex 
$\mathcal{A}$ as constructed in \ref{constr:A} is the anticanonical region of $X \subseteq Z$. This completes the proof. 
\end{proof}

\section{Proof of Theorem \ref{introthm1}}\label{sec:locallyToricExplicit}
In this section we concern ourselves with normal varieties $X$ with finitely generated Cox ring. For these varieties, the Cox ring provides an embedding into a toric variety and applying our results from Section \ref{sec:anticanonicalComplexes}, we obtain a criterion for the existence of anticanonical complexes. Beyond that, if $X$ is endowed with an effective action of an algebraic torus $\TT$, we can deduce the existence of an anticanonical complex from a lower dimensional variety $Y$ which suitably represents the field of rational invariant functions $\CC(X)^{\TT} = \CC(Y)$. In particular, we prove Theorem \ref{introthm1}.

Let $X$ be a semi-projective normal variety with only constant invertible global functions and finitely generated divisor class group $\Cl(X)$.
Then the \emph{Cox ring} of $X$ is the following $\Cl(X)$-graded ring, see \cite[Chap. 1]{ArDeHaLa2015} for the details of the definition:
$$
\mathcal{R}(X):= \bigoplus_{\Cl(X)}\Gamma(X,\mathcal{O}_X(D)).
$$
If the Cox ring of $X$ is finitely generated, this setting leads to a closed embedding of $X$ into a toric variety $Z$,  see \cite[Constr. 2.8]{HaHiWr2019}. 
We will identify $X$ with the embedded variety and refer to $X\subseteq Z$ as an \emph{explicit variety}.

\begin{remark}\label{rem:descripZ}
Let $X\subseteq Z$ be an explicit variety. 
Then the embedding $X\subseteq Z$ is neat, divisor class group, Picard group and Cox ring of $Z$ are given as
$$\Cl(Z) \cong \Cl(X), 
\qquad
\Pic(Z) \cong \Pic(X),
\qquad
\mathcal{R}(Z) = \CC[T_{\varrho}; \ \varrho \in \Sigma^{(1)}],$$
where $\Sigma^{(1)}$ denotes the set of rays of
the fan $\Sigma$ defining the toric variety $Z$. Moreover, the ample divisor classes of $X$ and $Z$ coincide under the isomorphism.
\end{remark}

Using the neat embedding of an explicit variety $X\subseteq Z$ we can directly deduce the following corollary from Proposition \ref{prop:AKKForLocallyToric}:

\begin{corollary}\label{cor:423}
Let $X\subseteq Z$ be a $\QQ$-Gorenstein explicit variety and assume there exists a semi-locally toric weakly tropical resolution.
Then $X \subseteq Z$ admits an anticanonical complex.
\end{corollary}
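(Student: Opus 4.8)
The plan is to read Corollary~\ref{cor:423} as a direct specialization of Proposition~\ref{prop:AKKForLocallyToric} to the explicit situation. Proposition~\ref{prop:AKKForLocallyToric} demands three things of the embedding $X \subseteq Z$: that it be neat, that it admit a semi-locally toric weakly tropical resolution, and that there exist a $T$-invariant $\QQ$-Cartier divisor $D$ on $Z$ whose pullback $D|_X$ is a canonical divisor on $X$. So I would verify these three hypotheses one by one and then simply invoke the proposition.

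First, neatness: this is exactly the content of Remark~\ref{rem:descripZ}, which records that for any explicit variety the embedding $X \subseteq Z$ is neat. Second, the semi-locally toric weakly tropical resolution: this is assumed in the statement of the corollary, so there is nothing to do. Third — and this is the only step requiring a small argument — I need to produce the $T$-invariant $\QQ$-Cartier divisor $D$ on $Z$ restricting to a canonical divisor on $X$. Here I would use that $X$ is $\QQ$-Gorenstein, so a canonical divisor $k_X$ on $X$ is $\QQ$-Cartier, and that under the neat embedding the pullback homomorphism $\Cl(Z) \to \Cl(X)$ is an isomorphism (again Remark~\ref{rem:descripZ}) which identifies $\Pic(Z) \cong \Pic(X)$ and moreover respects $\QQ$-Cartier classes, as the ambient divisors restricting to $\QQ$-Cartier ones are precisely the $\QQ$-Cartier ones on $Z$. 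Choosing a $T$-invariant Weil divisor $D$ on $Z$ whose class maps to the class of $k_X$, one gets $D|_X$ linearly equivalent to $k_X$, hence a canonical divisor on $X$; and the $\QQ$-Cartier property of $k_X$ forces $D$ to be $\QQ$-Cartier under the Picard-group identification.

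With the three hypotheses in hand, Proposition~\ref{prop:AKKForLocallyToric} applies verbatim and yields that $X \subseteq Z$ admits an anticanonical complex, which is the assertion. The main obstacle — really the only point that is not a pure citation — is making the third step precise: one must be slightly careful that the isomorphism $\Cl(Z) \xrightarrow{\sim} \Cl(X)$ coming from neatness not only transports divisor classes but also detects $\QQ$-Cartierness, so that a $\QQ$-Gorenstein $X$ indeed produces a $\QQ$-Cartier $D$ upstairs rather than merely a Weil one. This is where the compatibility $\Pic(Z) \cong \Pic(X)$ recorded in Remark~\ref{rem:descripZ} does the work, and I expect the whole proof to be only a few lines.
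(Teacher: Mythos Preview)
Your proposal is correct and matches the paper's approach exactly: the paper does not spell out a proof but simply notes that the corollary follows directly from Proposition~\ref{prop:AKKForLocallyToric} via the neat embedding of an explicit variety recorded in Remark~\ref{rem:descripZ}. Your three-step verification is precisely the intended unpacking, and your care in the third step---using that the isomorphism $\Cl(Z)\cong\Cl(X)$ restricts to $\Pic(Z)\cong\Pic(X)$ so that $\QQ$-Cartierness transfers---is the right justification for the only nontrivial point.
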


Now let us furthermore assume that 
the explicit variety $X\subseteq Z$ under consideration is endowed with an effective action of an algebraic torus $\TT$. 
Note that in this situation the embedding can always be rearranged to a $\TT$-equivariant one, see \cite[Thm. 3.10]{HaHiWr2019}. 
More precisely we will work with the concept of an {\em explicit $\TT$-variety} $X\subseteq Z$ in the sense of \cite[Def. 3.8]{HaHiWr2019}.

\begin{construction}\label{constr:MOQ}
Let $X\subseteq Z$ be an explicit $\TT$-variety.
Denote by $N$ the lattice of one-parameter subgroups of the acting torus $T$ on $Z$
and let $\Sigma$ be the defining fan of $Z$.
Denote by $N_{\TT}$ the
sublattice in $N$ corresponding to $\TT\subseteq T$.
Set $N' := N/N_{\TT}$, let $P_1 \colon N \rightarrow N'$ be the projection and $\pi_1 \colon T \rightarrow T'$ the associated homomorphism of tori. Set
$$
\Delta_0:= \left\{P_1(\varrho); \ \varrho \in \Sigma^{(1)}\right\} \cup \left\{0\right\}
$$
and let $Y_0$ be the closure of $\pi_1(X \cap T)$ in the toric variety $Z_{\Delta_0}$ corresponding to the fan $\Delta_0$.
Then $\pi_1$ defines a rational quotient
$X \dashrightarrow Y_0$, i.e.\ a dominant rational map such that $\pi_1^*\CC(Y_0)=\CC(X)^\TT$ holds.
Now let $\Delta$ be any fan having the same rays as $\Delta_0$
and let $Y\subseteq Z_\Delta$ be the closure of $Y_0$.
Then we will call $X\dasharrow Y$ an {\em explicit maximal orbit quotient} for the explicit $\TT$-variety $X\subseteq Z$.
\end{construction}

The rational quotient $X \dasharrow Y$ in Construction \ref{constr:MOQ} is a {\em maximal orbit quotient} in the sense of \cite[Def. 3.12]{HaHiWr2019}. Maximal orbit quotients encode properties of the torus action of an explicit $\TT$-variety and $Y$ is unique up to small birational modifications, i.e. birational morphisms inducing isomorphisms up to codimension two, see \cite{HaHiWr2019} for the details.

\begin{remark}\label{rem:tropX=tropY+}
In the situation of Construction \ref{constr:MOQ} the linear map $P_1\colon N_\QQ\rightarrow N'_\QQ$ maps $\trop(X)$ onto $\trop(Y)$.
In particular, we have 
$\trop(X) = \trop(Y)\oplus\ker(P_1)$, see \cite[Cor. 6.2.15]{MaSt2015}. 
\end{remark}

Starting with an explicit $\TT$-variety $X \subseteq Z$
the explicit maximal orbit quotient as in Construction \ref{constr:MOQ} provides the possibility to check if the variety $X \subseteq Z$ admits an anticanonical complex by studying the lower dimensional variety $Y$:

\begin{theorem}\label{thm:quotKrit}
In the notation of Construction \ref{constr:MOQ}, let $X \subseteq Z$ be a $\QQ$-Gorenstein explicit $\TT$-variety
and $X \dashrightarrow Y$
an explicit maximal orbit quotient such that
\begin{enumerate}
    \item 
    $Y \subseteq Z_{\Delta}$ admits a semi-locally toric weakly tropical resolution,
    \item $P_1$ maps
    $|\Sigma \sqcap \trop(X)|$ into $|\Delta \sqcap \trop(Y)|$.
\end{enumerate}
Then $X \subseteq Z$ admits an anticanonical complex.
\end{theorem}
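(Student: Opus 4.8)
The plan is to reduce the statement to Corollary~\ref{cor:423} by producing, from a semi-locally toric weakly tropical resolution of $Y \subseteq Z_\Delta$, a semi-locally toric weakly tropical resolution of $X \subseteq Z$. The geometric mechanism is the commutative quotient diagram relating $X \subseteq Z$ to $Y \subseteq Z_Y$ together with Remark~\ref{rem:tropX=tropY+}, which says $\trop(X) = \trop(Y) \oplus \ker(P_1)$. First I would fix a quasifan structure on $\trop(Y)$ fine enough that the weakly tropical resolution $Y' \subseteq Z_{\Delta'}$, with $\Delta' = \Delta \sqcap \trop(Y)$, is semi-locally toric; pulling this back along $P_1$ (using hypothesis (ii), which guarantees $P_1$ is a morphism of the relevant refined fans) yields a compatible quasifan structure on $\trop(X)$ and a toric morphism $Z' \to Z$ refining $\Sigma' = \Sigma \sqcap \trop(X)$. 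One must check this is genuinely the weakly tropical resolution of $X$, i.e.\ that $\Sigma \sqcap \trop(X)$ refines appropriately after choosing the quasifan structure on $\trop(X)$ induced by $P_1$ and by the chosen structure on $\trop(Y)$; this is where hypothesis (ii) is used essentially.

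The main step is then to transport the semi-local toric structure across $P_1$. For a maximal cone $\sigma' \in \Sigma'$, by construction $\sigma'$ maps under $P_1$ into a cone $\delta'$ of $\Delta'$ (maximal, by the structure theorem for tropical varieties, since $\dim\trop(X) = \dim\trop(Y) + \dim\ker(P_1)$ and maximal cones have top dimension), and $\ker(P_1) \subseteq \trop(X)$ contributes a ``free'' linear direction along which $X$ is constant up to the torus action. Concretely, I would write $N = N_\delta \oplus (N_{\TT} \cap N) \oplus \tilde N$ compatibly with the decompositions $N' = N'(\delta') \oplus \tilde N'$ used for $Y$, so that the chart $Z'_{\sigma'}$ decomposes as $U(\sigma') \times \tilde \TT$ with $U(\sigma') \cong U(\delta') \times (\text{torus factor from }\ker P_1)$, and the projection $\pi_{\sigma'}$ factors through $\pi_{\delta'}$ on the $Y$-side times the identity. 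Since $\pi_{\delta'}$ maps $Y \cap Z_{\Delta',\delta'}$ isomorphically onto an open subset of $U(\delta')$ and $X_{\sigma'}$ is (up to the equivariant embedding and the $\TT$-action, which is exactly what the maximal orbit quotient encodes) the preimage of $Y$ times a torus, one concludes $\pi_{\sigma'}$ maps $X_{\sigma'}$ isomorphically onto an open subset of $U(\sigma')$. This gives semi-local toricity of $X' \subseteq Z'$.

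The hard part will be the bookkeeping in the previous paragraph: making the lattice decomposition on the $X$-side genuinely compatible with the one chosen on the $Y$-side, and verifying that ``$X$ over its maximal orbit quotient looks locally like $Y$ times a torus on the relevant charts'' in a form precise enough to conclude the isomorphism of $X_{\sigma'}$ onto an open subset of $U(\sigma')$. This is essentially an unwinding of the construction of explicit $\TT$-varieties and explicit maximal orbit quotients from \cite{HaHiWr2019}, combined with Remark~\ref{rem:sublattice}, which says semi-local toricity is preserved under toric isomorphisms induced by lattice isomorphisms, so I can always normalize the chosen decompositions. One subtlety is that $Y$ is only determined up to small birational modifications, so I should either fix the particular $Y$ (and $\Delta$) provided by the hypothesis or note that small modifications do not affect the existence of a semi-locally toric weakly tropical resolution.

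Once $X' \subseteq Z'$ is known to be semi-locally toric and to be a weakly tropical resolution of $X \subseteq Z$, Corollary~\ref{cor:423} applies directly (recall an explicit $\TT$-variety is in particular an explicit variety, hence neatly embedded by Remark~\ref{rem:descripZ}, and $X$ is $\QQ$-Gorenstein by hypothesis), and we conclude that $X \subseteq Z$ admits an anticanonical complex.
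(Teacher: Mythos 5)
Your overall strategy is the paper's: transport the semi-locally toric structure from $Y'\subseteq Z_{\Delta'}$ to $X'\subseteq Z'$ along $P_1$ and then invoke Corollary~\ref{cor:423}. But the central structural claim in your second paragraph --- that $U(\sigma')\cong U(\delta')\times(\text{torus factor from }\ker P_1)$ --- is false in general, and it sits exactly where the work happens. A maximal cone $\sigma'\in\Sigma\sqcap\trop(X)$ lives in $N(\sigma')=N_{\TT}\oplus N'(\delta')$ and projects onto $\delta'$, but it is not a product cone: it typically mixes rays of $\Sigma$ (which do not lie in $\ker P_1$) with new rays of the refinement lying in the lineality space (which do), so $U(\sigma')$ can be a singular affine toric variety that is not of the form $U(\delta')\times\tilde\TT$. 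What one actually has is only a toric \emph{morphism} $\psi\colon U(\sigma')\to U(\delta')$ induced by the lattice projection $N(\sigma')\to N'(\delta')$, fitting into a commutative square $Z_{\sigma'}\cong U(\sigma')\times\tilde\TT\to Z_{\delta'}\cong U(\delta')\times\tilde\TT$ with vertical map $\psi\times\id$. The correct conclusion is then not that $\pi_{\sigma'}(X_{\sigma'})$ equals $\pi_{\delta'}(Y_{\delta'})$ times a torus, but that $\pi_{\sigma'}$ maps $\pi_1^{-1}(Y_{\delta'})$ isomorphically onto $\psi^{-1}(\pi_{\delta'}(Y_{\delta'}))$, which is open in $U(\sigma')$: since $Y_{\delta'}$ is the graph of a morphism from $\pi_{\delta'}(Y_{\delta'})$ to $\tilde\TT$, its preimage under $\psi\times\id$ is the graph of the composite over $\psi^{-1}(\pi_{\delta'}(Y_{\delta'}))$. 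One must then still identify $X'_{\sigma'}$ with $\pi_1^{-1}(Y_{\delta'})$, which the paper does by an irreducibility-plus-dimension argument; your phrase ``$X_{\sigma'}$ is the preimage of $Y$ times a torus'' silently assumes this.

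Second, your parenthetical ``by construction $\sigma'$ maps under $P_1$ into a cone $\delta'$ of $\Delta'$'' glosses over a step the paper must handle explicitly: the argument needs $P_1(\sigma')$ to \emph{be} a cone of $\Delta'$ (so that $Z_{\delta'}$ is a chart of $Z_{\Delta'}$ to which semi-local toricity of $Y'$ applies with $\delta'=P_1(\sigma')$), whereas a priori $P_1(\sigma')$ is only contained in some cone of $\Delta'$. The paper fixes this by further refining the quasifan structure on $\trop(Y)$ by a complete fan containing $P_1(\sigma')$ and invoking Lemma~\ref{lem:resolveLocallyToric} to see that the correspondingly refined proper transform of $Y$ is still semi-locally toric. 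With these two repairs --- replacing the product decomposition by the toric morphism $\psi$ and adding the refinement step --- your plan becomes the paper's proof.
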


\begin{proof}
Let $Z_{\Delta'} \rightarrow Z_{\Delta}$ be a semi-locally toric weakly tropical resolution of $Y$. In particular, we have $\Delta' = \Delta \sqcap \trop(Y)$ for a fixed quasifan structure on $\trop(Y)$ and by refining we achieve that $\Delta'$ is a subfan of $\trop(Y)$. 
Consider the quasifan structure
$\left\{
P_1^{-1}(\sigma); \ \sigma \in \trop(Y)
\right\}$ on
$\trop(X)$ and let
$\sigma' \in \Sigma'= \Sigma \sqcap \trop(X)$ be any cone.
We claim that $X'_{\sigma'}$ is semi-locally toric. 

Assume $\delta':= P_1(\sigma') \in \Delta'$ holds.
Note that due to Remark \ref{rem:sublattice}
we can identify $N'$ with a sublattice of $N$ and $Y_0$ 
with its image in $Z$ under the toric morphism defined by the morphism $N' \rightarrow N$.
As $Y'$ is semi-locally toric, there exists a maximal cone $\tau \in \trop(Y)$ and a
decomposition $N' = N'(\delta') \oplus \tilde{N}'$ 
such that the corresponding projection
$\pi_{\delta'}$ maps $Y_{\delta'}$ isomorphically onto and open subset of $U(\delta')$. 
As $P_1^{-1}(\tau)$ is a maximal cone in $\trop(X)$ containing $\sigma'$, any maximal cone of a refined quasifan structure on $\trop(X)$ spans the same 
linear subspace. Therefore, we can choose 
$N(\sigma') = N_{\TT} \oplus N'(\delta')$. In particular, choosing the decomposition
$N = N(\sigma')  \oplus \tilde{N}'$
we obtain a commutative diagram
\begin{center}
  \begin{tikzcd}
    Z_{\sigma'}\arrow[d,"\pi_1",xshift=-14mm]
    \quad \cong \quad
    U(\sigma')\times\tilde{\TT}
    \arrow[d, "\psi\times\id", xshift=8mm]
    \arrow[r,"\pi_{\sigma'}"]&
    U(\sigma')\arrow[d, "\psi"]
    \\
    Z_{\delta'}
    \quad \cong\quad
    U(\delta')\times\tilde{\TT}\arrow[r,"\pi_{\delta'}"]
    & 
    U(\delta'),
    \end{tikzcd}
\end{center}
where $\psi$ is the morphism of affine toric varieties 
arising via the projection of lattices  $N(\sigma')\rightarrow N'(\delta')$
mapping $\sigma'$ onto $\delta'$.
As $\pi_{\delta'}$ maps $Y_{\delta'}$ isomorphically onto 
its image we conclude that the projection
$\pi_{\sigma'}$ maps $\pi_1^{-1}(Y_{\delta'})$ isomorphically onto the open subset $\psi^{-1}(\pi_{\delta'}(Y_{\delta'}))
\subseteq U(\sigma')$. We claim that $X'_{\sigma'}$ equals $\pi_1^{-1}(Y_{\delta'})$:
As $\psi^{-1}(\pi_{\delta'}(Y_{\delta'}))$ is irreducible, so is $\pi_1^{-1}(Y_{\delta'})$.
Thus $X'_{\sigma'} \subseteq \pi_1^{-1}(Y_{\delta'})$ is a closed irreducible subvariety of the same dimension and thus equality holds.

In order to conclude the proof 
it is only left to show that for any $\sigma' \in \Sigma'$
we can achieve $P_1(\sigma') \in \Delta'$ by
sufficiently refining the quasifan structure on $\trop(Y)$.
By construction of $\Sigma'$ we have 
$P_1(\sigma')\subseteq \delta$ for some $\delta \in \Delta'$.
Consider any complete fan $\Delta^c$ with
$P_1(\sigma') \in \Delta^c$. 
Then $\trop(Y) \sqcap \Delta^c$ defines a refined fan structure on $\trop(Y)$ that contains $P_1(\sigma')$
and we set $\Delta'':=\trop(Y) \sqcap \Delta^c \sqcap \Delta'$. Now using Lemma \ref{lem:resolveLocallyToric}
we conclude that the proper transform
$Y''$ with respect to the morphism
$Z_{\Delta''} \rightarrow Z_{\Delta'}$
corresponding to the refinement $\Delta'' \rightarrow \Delta'$
is semi-locally toric as $Y'$ is so. 
Thus by the above considerations we obtain that $X'_{\sigma'}$ is semi-locally toric
and the assertion follows with Corollary 
\ref{cor:423}
\end{proof}

The proof of Theorem \ref{thm:quotKrit} provides indeed the following explicit way to construct a semi-locally toric weakly tropical resolution.

\begin{remark}
\label{rem:LocallyToricXOutOfY}
Let $X\subseteq Z$ and $Y\subseteq Z_\Delta$  be as in Theorem \ref{thm:quotKrit} and let $Z_{\Delta'}\rightarrow Z_\Delta$
be the semi-locally toric weakly tropical resolution
of $Y$.
Fix any quasifan structure on $\trop(Y)$ having $\Delta'$ as a subfan and endow $\trop(X)$ with the quasifan structure defined by the cones $P_1^{-1}(\tau)$ with $\tau\in\trop(Y)$. Then the 
refinement of fans $\trop(X)\sqcap\Sigma\rightarrow \Sigma$
defines a semi-locally toric weakly tropical resolution $Z'\rightarrow Z$ of $X$.
\end{remark}

\begin{corollary}\label{cor:introCor5.7}
Let $X \subseteq Z$ be a $\QQ$-Gorenstein explicit $\TT$-variety
and let $X \dashrightarrow Y$ be an explicit maximal orbit quotient, such that $Y$ is complete.
Then $X \subseteq Z$ admits an anticanonical complex if $Y \subseteq Z_\Delta$ admits a semi-locally toric weakly tropical resolution.
\end{corollary}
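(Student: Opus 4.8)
The plan is to deduce Corollary~\ref{cor:introCor5.7} directly from Theorem~\ref{thm:quotKrit} by verifying that its two hypotheses are met. Hypothesis~(i) of Theorem~\ref{thm:quotKrit} is literally the assumption we are given: $Y \subseteq Z_\Delta$ admits a semi-locally toric weakly tropical resolution. So the only real work is to establish hypothesis~(ii), namely that the projection $P_1 \colon N_\QQ \to N'_\QQ$ maps $|\Sigma \sqcap \trop(X)|$ into $|\Delta \sqcap \trop(Y)|$. This is where the completeness assumption on $Y$ enters.

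First I would record the set-theoretic facts that make this work. By Remark~\ref{rem:tropX=tropY+} we have $P_1(\trop(X)) = \trop(Y)$, and more precisely $\trop(X) = \trop(Y) \oplus \ker(P_1)$; hence $|\Sigma \sqcap \trop(X)| \subseteq |\trop(X)|$ is mapped by $P_1$ into $|\trop(Y)|$. Since $Y \subseteq Z_\Delta$ is complete, the discussion following Construction~\ref{constr:decomp} (invoking \cite[Prop.~6.4.7]{MaSt2015}) gives $|\Delta| = |\trop(Y)|$, and therefore $|\Delta \sqcap \trop(Y)| = |\Delta| \cap |\trop(Y)| = |\trop(Y)|$. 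Thus the target region $|\Delta \sqcap \trop(Y)|$ is all of $|\trop(Y)|$, and hypothesis~(ii) reduces to the tautology $P_1(|\Sigma \sqcap \trop(X)|) \subseteq |\trop(Y)|$, which we just observed. With both hypotheses in hand, Theorem~\ref{thm:quotKrit} yields that $X \subseteq Z$ admits an anticanonical complex, completing the proof.

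The main subtlety — and the step I would be most careful about — is the use of completeness to identify $|\Delta|$ with $|\trop(Y)|$. Without completeness, $\Delta$ need only be supported on part of $\trop(Y)$, so $|\Delta \sqcap \trop(Y)|$ could be a proper subset of $|\trop(Y)|$, and then a cone $\sigma' \in \Sigma \sqcap \trop(X)$ could project to a point of $\trop(Y)$ lying outside $|\Delta|$; hypothesis~(ii) would genuinely fail. So the proof is short, but the completeness hypothesis is doing essential work and the write-up should make the two cited facts ($P_1(\trop(X)) = \trop(Y)$ and $|\Delta| = |\trop(Y)|$) explicit rather than leaving them implicit. Everything else is a direct appeal to Theorem~\ref{thm:quotKrit}.
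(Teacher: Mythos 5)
Your proposal is correct and follows essentially the same route as the paper: both reduce the statement to checking the two hypotheses of Theorem~\ref{thm:quotKrit}, using $P_1(\trop(X)) = \trop(Y)$ from Remark~\ref{rem:tropX=tropY+} and the completeness of $Y$ to guarantee that $|\Delta|$ contains $|\trop(Y)|$, so that condition~(ii) becomes automatic. Your write-up is in fact slightly more explicit than the paper's about why completeness is needed, but the argument is the same.
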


\begin{proof}
By construction $Y$ is the closure of $Y_0$ in 
a toric variety $Z_\Delta$, where $\Delta$ contains $\Delta_0$ as a subfan.
Moreover, as $Y$ is complete, the support of the defining fan $\Delta$ contains $|\trop(Y)|$. As $P_1$ maps $|\trop(X)|$ into $|\trop(Y)|$, and $Y$ admits by assumption a semi-locally toric weakly tropical resolution, we meet the conditions of Theorem \ref{thm:quotKrit} and the assertion follows. 
\end{proof}

Note that in the case that $X \subseteq Z$ is a complete 
explicit $\TT$-variety we have $|\trop(X)| \subseteq |\Sigma|$. In particular, in this situation every variety $Y$ fulfilling the conditions of Theorem \ref{thm:quotKrit} has to be complete as this is equivalent to $|\trop(Y)| \subseteq |\Delta|$.

\begin{proof}[Proof of Theorem \ref{introthm1}]
We are in the situation of Corollary \ref{cor:introCor5.7}.
Therefore, $X\subseteq Z$ admits an anticanonical complex and Remark \ref{rem:charAKK}
gives the characterizations of the singularity types.
\end{proof}

\section{General arrangement varieties}\label{section:generalArrangementVarieties}
\noindent
In this section we treat the example class of general arrangement varieties as introduced in \cite[Sec. 6]{HaHiWr2019}.
After recalling the basic facts about these $\TT$-varieties we construct an explicit maximal orbit quotient in order to use our Theorem \ref{thm:quotKrit} to prove that they admit anticanonical complexes.

Let us recall the construction of graded rings $R(A,P)$ which are defined by a pair of matrices and that turn out to be the Cox rings of general arrangement varieties, see \cite[Constr.  6.3]{HaHiWr2019}.

\begin{construction}\label{constr:R(A,P)}
Fix integers $r \ge c > 0$
and $n_0, \ldots, n_r > 0$ 
as well as $m \ge 0$. 
Set $n := n_0 + \ldots + n_r$.
Define a pair $(A,P)$, where
\begin{enumerate}
\item 
$A=(a_0,\ldots,a_r)$ is a $(c+1) \times (r+1)$ matrix over $\CC$ such that any $c+1$ of its columns 
are linearly independent,
\item
$P$ is an integral $(r+s) \times (n+m)$ matrix
built from tuples of positive integers 
$l_i = (l_{i1},\ldots,l_{in_i})$, 
where $i = 0, \ldots, r$
and a $s\times (n+m)$ matrix $D$
as follows
$$
P
\ := \
\left[
\begin{array}{c}
\begin{array}{ccccccc}
-l_{0} & l_{1} &  & 0 & 0  &  \ldots & 0
\\
\vdots & \vdots & \ddots & \vdots & \vdots &  & \vdots     
\\
-l_{0} & 0 &  & l_{r} & 0  &  \ldots & 0
\end{array}
\\
\hline
\\
D
\end{array}
\right]
,
$$
whereby we require the columns of the matrix $P$ to be pairwise different, primitive 
and generate $\QQ^{r+s}$ as a vectorspace.
\end{enumerate}
Write  $\CC[T_{ij},S_k]$ for the polynomial ring 
in the variables $T_{ij}$, where $i = 0, \ldots, r$, 
$j = 1, \ldots, n_i$, 
and $S_k$, where $k = 1, \ldots, m$.
Every $l_i$ defines a monomial 
$$
T_i^{l_i}
\ := \ 
T_{i1}^{l_{i1}} \cdots T_{in_i}^{l_{in_i}}
\ \in \ 
\CC[T_{ij},S_k].
$$
Moreover, for every $t = 1, \ldots, r-c$, 
we obtain a polynomial $g_t$ by computing 
the following  $(c+2) \times (c+2)$ 
determinant 
$$ 
g_t 
\ := \
\det
\left[
\begin{array}{cccc}
a_0 & \ldots & a_c & a_{c+t}
\\
T_0^{l_0} & \ldots & T_c^{l_{c}} & T_{c+t}^{l_{c+t}}
\end{array}
\right]
\ \in \ 
\CC[T_{ij},S_k].
$$
Now, let $e_{ij} \in \ZZ^{n}$ 
and $e_k \in \ZZ^{m}$ denote the 
canonical basis vectors
and consider the projection 
$$
Q \colon \ZZ^{n+m} 
\ \to \ 
K := \ZZ^{n+m} / \im(P^*)
$$ 
onto the factor group
by the row lattice of $P$.
Then the 
\emph{$K$-graded $\CC$-algebra
associated with $(A,P)$} 
is defined by
$$ 
R(A,P)
\ := \ 
\CC[T_{ij},S_k] / \bangle{g_1,\ldots,g_{r-c}},
$$
$$
\deg(T_{ij}) :=  Q(e_{ij}),
\qquad
\deg(S_k) :=  Q(e_k).
$$
\end{construction}

The rings $R(A,P)$ are integral normal complete intersections
and their grading allows to use them as Cox rings of certain varieties; see \cite{HaHiWr2019}.

\begin{example}[{Compare 
\cite[Ex. 6.17]{HaHiWr2019}
}] \label{ex:RAP1}
Consider the tuple $(A,P)$ given as
\begin{eqnarray*}
A:=\left[
\begin{array}{cccc}
1&0&0&-1\\
0&1&0&-1\\
0&0&1&-1
\end{array}
\right]
, \qquad
P:=\left[
\begin{array}{ccccc}
-1&-2&2&0&0\\
-1&-2&0&2&0\\
-1&-2&0&0&4\\
-1&-3&1&1&1
\end{array}
\right]
.
\end{eqnarray*}
We obtain a ring
$$R(A,P) = \CC[T_{01},T_{02},T_{11},T_{21},T_{31}] / \bangle{T_{01}T_{02}^2+T_{11}^2+T_{21}^2+T_{31}^4}$$
and the $K = \ZZ^5 / \im(P^*) = \ZZ \times \ZZ/2\ZZ \times \ZZ / 2\ZZ$
grading is given by assigning to every variable $T_{ij}$ the degree $\deg(T_{ij}):=w_{ij}$
where the $w_{ij}$ are the columns of the 
following matrix 
$Q = [w_{01},w_{02},w_{11},w_{21},w_{31}]$
$$
Q:=\left[
\begin{array}{ccccc}
2&1&2&2&1\\
\bar{0}&\bar{0}&\bar{1}&\bar{1}&\bar{0}\\
\bar{0}&\bar{1}&\bar{0}&\bar{1}&\bar{0}
\end{array}
\right].
$$
\end{example}

In the next step we will construct explicit varieties with torus action
having the rings $R(A,P)$ as their Cox ring.

\begin{construction}\label{constr:XAPSigma}
Let $R(A,P)$ be as in Construction \ref{constr:R(A,P)}. The generators $T_{ij},S_k$ of $R(A,P)$ give rise to an embedding
\begin{center}
\begin{tikzcd}
\bar X := \Spec(R(A,P))\arrow[r, hook]
&
\bar Z:=\CC^{n+m}. 
\end{tikzcd}
\end{center}
Fix any fan $\Sigma$ having the columns of $P$ as its primitive ray generators and denote by $Z$ the toric variety with defining fan $\Sigma$.
Define a fan 
$\hat\Sigma:=\{\sigma\preceq\gamma;\  P(\sigma)\in\Sigma\}$, where $\gamma$ denotes the positive orthant and denote by $\hat Z$ the corresponding toric variety. This gives rise to a commutative diagram
\begin{center}
\begin{tikzcd}
\bar X\cap \hat Z \arrow[r, hook]\arrow[d,"p"]
&\hat Z\arrow[d,"p"]
\\
X(A,P,\Sigma)\arrow[r, hook]
&
Z
\end{tikzcd}
\end{center}
where $p$ denotes the toric morphism defined by the linear map $P\colon\CC^{n+m}\rightarrow\CC^{r+s}$
and $X:=X(A,P,\Sigma)$
is the closure of $p(\bar X\cap \TT^{n+m})$
inside $Z$.
By construction the variety $X$ is invariant under the subtorus action $\TT^s\subseteq\TT^{r+s}$ of the acting torus of $Z$.
\end{construction}

The varieties $X:=X(A,P,\Sigma)\subseteq Z$
are normal explicit $\TT^s$-varieties with 
dimension, invertible functions, divisor class group and Cox ring given in terms of their defining data by:
$$
\dim(X) = s + c,
\qquad
\Gamma(X,\mathcal{O}^*) = \CC^*,
\qquad
\Cl(X) = K,
\qquad
\mathcal{R}(X) = R(A,P). 
$$
The torus action of $\TT^s$ is effective and of complexity $c$, i.e.\ the general torus orbit is of codimension $c$.

\begin{example}[{Compare \cite[Ex. 7.9]{HaHiWr2019}
}]\label{ex:RAP2}
Consider the matrices 
$A=[a_0,\ldots,a_3]$ 
and 
$P=[v_{01},v_{02},v_{11},v_{21},v_{31}]$ from Example \ref{ex:RAP1}.
We choose a toric variety $Z$ with defining fan $\Sigma \subseteq \QQ^4$ with maximal cones
$$
\cone(v_{01},v_{11},v_{21},v_{31}),\quad \cone(v_{02},v_{11},v_{21},v_{31}),\quad
\cone(v_{01},v_{02},v_{11}),
$$
$$
\cone(v_{01},v_{02},v_{21}),\qquad
\cone(v_{01},v_{02},v_{31}).
$$
The resulting variety $X(A,P,\Sigma)$
has $R(A,P)$ as its Cox ring and is a Gorenstein Fano variety of dimension three, admits a $\CC^*$-action of complexity two and has Picard number one.
\end{example}

\begin{definition}
We call a variety  $X(A,P,\Sigma)\subseteq Z$ from Construction \ref{constr:XAPSigma}
an {\em explicit general arrangement variety}.
Moreover we call any $\TT$-variety that is equivariantly isomorphic to an explicit general arrangement variety
a \emph{general arrangement variety}.
\end{definition}

The class of general arrangement varieties
comprises i.a. all toric varieties and all
rational $\TT$-varieties of complexity one
with only constant invertible global functions
and $\Gamma(X,\OOO)^\TT = \CC$; see \cite[Rem. 6.2]{HaHiWr2019}.

Let us investigate the explicit maximal orbit quotient $X\dashrightarrow Y_0$ from Construction \ref{constr:MOQ} for general arrangement varieties:

\begin{construction}\label{constr:genArrY}
Let $X:=X(A,P,\Sigma)\subseteq Z$ be an explicit general arrangement variety arising via Construction \ref{constr:XAPSigma}. 
Then $X$ is invariant under the subtorus action of $\TT^s\subseteq\TT^{r+s}$ on $Z$. Using Construction \ref{constr:MOQ}, the projection of lattices $P_1\colon \ZZ^{r+s}\rightarrow\ZZ^r$ with the corresponding projection of tori $\pi_1\colon\TT^{r+s}\rightarrow\TT^r$ give rise to a fan $\Delta_0$ defining a toric variety $Z_{\Delta_0}$ and an explicit variety $Y_0\subseteq Z_{\Delta_0}$:
$$\Delta_0:=\{P_1(\varrho);\ \varrho\in\Sigma^{(1)}\},\qquad Y_0:=\overline{\pi_1(X\cap\TT^{r+s})}\subseteq Z_{\Delta_0}.$$
Moreover, we obtain a commutative diagram of rational quotients:
\begin{center}
\begin{tikzcd}
X\arrow[r,hook]\arrow[d,dashed]
&
Z\arrow[d,dashed]
\\
Y_0\arrow[r,hook]
&
Z_{\Delta_0}.
\end{tikzcd}
\end{center}
As above, let $a_0,\ldots,a_r$ denote the columns of $A$. Then the variety $Y_0\cap \TT^r$ is given as the vanishing set of the linear equations $h_1,\ldots,h_{r-c}$, where
$$ 
h_t
:=
\det
\left[
\begin{array}{ccccc}
a_0 &a_1& \ldots & a_c & a_{c+t}
\\
1 &U_1 &\ldots & U_c & U_{c+t}
\end{array}
\right]
\in 
\CC[U_1^\pm, \ldots, U_r^\pm].
$$
\end{construction}

Note that $\Delta_0$ is the one-skeleton of the defining fan of $\PP_r$. Moreover, the closure of $Y_0\cap\TT^r$ inside $\PP_r$ is a linear subspace $\PP_c\subseteq \PP_r$ and the equations of this embedding are given via the kernel of the matrix $A=(a_0,\ldots,a_r)$.
Note that $X\dashrightarrow \PP_c$ defines  an explicit maximal orbit quotient as in Construction \ref{constr:MOQ}. 
Intersecting $\PP_c$ with the coordinate hyperplanes of $\PP_r$ yields the following collection of hyperplanes building a hyperplane arrangement in general position and explaining the name of these varieties:
$$ 
H_0, \ldots, H_r
\ \subseteq \ 
\PP_c,\qquad
H_i 
\ := \ 
\{z \in \PP_c; \ a_{i0}z_0 + \ldots + a_{ic}z_c = 0\}.
$$

\begin{remark}\label{rem:tropXGenArr}
In the situation of Construction \ref{constr:genArrY} the tropical variety of $Y_0$ is the $c$-skeleton of the fan of $\PP_r$, i.e.\  
$$\trop(Y_0\cap \TT^r)=\Sigma_{\PP^r}^{\leq c}:=\left\{\sigma\in\Sigma_{\PP^r};\  \dim(\sigma)\leq c\right\}.$$
Using Remark \ref{rem:tropX=tropY+} we conclude $|\trop(X)|= |\Sigma^{\leq c}_{\PP_r}|\times\QQ^s$.
In the following, if not specified otherwise, we will always assume $\trop(X)$ to be endowed with the quasifan structure defined by the product $\Sigma_{\PP_r}^{\leq c}\times\QQ^s$.
\end{remark}

\begin{example}\label{ex:RAP3}
Consider the explicit general arrangement variety ${X:=X(A,P,\Sigma) \subseteq Z}$ from Examples \ref{ex:RAP1} and \ref{ex:RAP2}. The $\TT^1$-action on $X$ arises as a subtorus action of $\TT^4$ acting on $Z$.
Using the projection of tori $\TT^{3+1}\rightarrow \TT^3$, we obtain a rational quotient $X\dashrightarrow Y_0$, where 
$$Y_0 \cap \TT^3 \cong \V_{\TT_3}(1+U_1+U_2+U_3) \subseteq  \TT^3 $$
and the tropical variety of $X$ is given as
$\trop(Y_0\cap\TT^3) \times \QQ = \Sigma_{\PP_3}^{\leq 2}  \times \QQ.$
\begin{center}
\tdplotsetmaincoords{70}{110}
\begin{tikzpicture}[tdplot_main_coords]
\tdplotsetrotatedcoords{60}{00}{0}
\draw[thick,tdplot_rotated_coords,->] (0,0,0) -- (1.7,0,0){};
\draw[thick,tdplot_rotated_coords,->] (0,0,0) -- (0,1.7,0){};
\draw[thick,tdplot_rotated_coords,->] (0,0,0) -- (0,0,1.7) {};
\draw[thick,tdplot_rotated_coords,->] (0,0,0) -- (-1,-1,-1) {};
\draw[thick,tdplot_rotated_coords, draw=black, fill=gray!30!,fill opacity=0.7] (0,0,1.7) -- (-1,-1,-1) -- (0,0,0) -- cycle; 
\draw[thick,tdplot_rotated_coords, draw=black, fill=gray!30!,fill opacity=0.7] (0,1.7,0) -- (-1,-1,-1) -- (0,0,0) -- cycle; 
\draw[thick,tdplot_rotated_coords, draw=black, fill=gray!30!,fill opacity=0.7] (1.7,0,0) -- (-1,-1,-1) -- (0,0,0) -- cycle; 
\draw[thick,tdplot_rotated_coords, draw=black, fill=gray!30!,fill opacity=0.7] (0,0,1.7) -- (0,1.7,0) -- (0,0,0) -- cycle; 
\draw[thick,tdplot_rotated_coords, draw=black, fill=gray!30!,fill opacity=0.7] (0,1.7,0) -- (1.7,0,0) -- (0,0,0) -- cycle; 
\draw[thick,tdplot_rotated_coords, draw=black, fill=gray!30!,fill opacity=0.7] (0,0,1.7) -- (1.7,0,0) -- (0,0,0) -- cycle; 
\node at (0,0,-1) {\tiny $\trop(Y_0)$};
\end{tikzpicture}
\end{center}
\end{example}

\begin{theorem}\label{thm:weakTropicalLocallyToric}
Let $X:=X(A,P,\Sigma)\subseteq Z$ be an explicit general arrangement variety. Then the weakly tropical resolution $Z'\rightarrow Z$ of $X$ is semi-locally toric. 
In particular, if $X$ is $\QQ$-Gorenstein, then $X$ admits an anticanonical complex.
\end{theorem}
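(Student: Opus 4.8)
The plan is to descend the statement to the explicit maximal orbit quotient. By Construction~\ref{constr:genArrY} and the discussion following it, $X=X(A,P,\Sigma)$ carries an explicit maximal orbit quotient $X\dashrightarrow\PP_c$, and we realise the quotient space as the linear subspace $Y:=\PP_c\subseteq\PP_r=Z_\Delta$, where $\Delta$ is the complete defining fan $\Sigma_{\PP_r}$ of $\PP_r$; this $Y$ is complete, $\trop(Y)=\Sigma_{\PP_r}^{\leq c}$, and $|\trop(X)|=|\Sigma_{\PP_r}^{\leq c}|\times\QQ^s$ by Remarks~\ref{rem:tropX=tropY+} and~\ref{rem:tropXGenArr}. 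I would first record that condition~(2) of Theorem~\ref{thm:quotKrit} holds automatically here: $\Delta$ is complete, so $|\Delta\sqcap\trop(Y)|=|\trop(Y)|$, while $P_1(|\Sigma\sqcap\trop(X)|)\subseteq P_1(|\trop(X)|)=|\trop(Y)|$ by Remark~\ref{rem:tropX=tropY+}. Granting condition~(1) — that $Y\subseteq Z_\Delta$ admits a semi-locally toric weakly tropical resolution — the part of the proof of Theorem~\ref{thm:quotKrit} establishing semi-local toricity of the charts of $X'$, which does not invoke the $\QQ$-Gorenstein assumption, then shows that for every maximal cone $\sigma'\in\Sigma\sqcap\trop(X)$ the projection $\pi_{\sigma'}$ maps $X'\cap Z'_{\sigma'}$ isomorphically onto an open subset of $U(\sigma')$; that is, the weakly tropical resolution $Z'\to Z$ of $X$ (for the standard quasifan structure on $\trop(X)$, refined if necessary as in that proof — a refinement preserving semi-local toricity by Lemma~\ref{lem:resolveLocallyToric}~(ii)) is semi-locally toric. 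The ``in particular'' is then immediate: $X\subseteq Z$ is an explicit variety, so for $\QQ$-Gorenstein $X$ the existence of an anticanonical complex is Corollary~\ref{cor:423}.

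The real work is therefore condition~(1): the weakly tropical resolution of the linear subspace $Y=\PP_c\subseteq\PP_r$ is semi-locally toric. Since $\trop(Y)=\Sigma_{\PP_r}^{\leq c}$ is already a subfan of $\Sigma_{\PP_r}$, one has $\Delta'=\Delta\sqcap\trop(Y)=\Sigma_{\PP_r}^{\leq c}$ with no refinement needed, so the weakly tropical resolution is $Z_{\Delta'}\to Z_\Delta$. Its maximal cones are the $c$-dimensional faces $\tau=\cone(\varrho_i;\ i\in J)$ of the simplex fan, with $J\subseteq\{0,\ldots,r\}$ and $|J|=c$; each such $\tau$ is smooth, so $U(\tau)\cong\CC^c$, and the product decomposition of Construction~\ref{constr:decomp} takes the form $Z_{\Delta',\tau}\cong\CC^c\times\TT^{r-c}$ with $\pi_\tau$ the projection onto the first factor and the $\CC^c$-coordinates indexed by the rays in $J$.

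Next I would verify that $\pi_\tau$ maps $Y'\cap Z_{\Delta',\tau}$ isomorphically onto an open subset of $\CC^c=U(\tau)$. After permuting the homogeneous coordinates of $\PP_r$ — which permutes the columns $a_0,\ldots,a_r$ of $A$ and preserves the general-position hypothesis — we may assume $J=\{1,\ldots,c\}$. Then $Y\cap\TT^r=\V(h_1,\ldots,h_{r-c})$ with the affine-linear forms $h_t$ of Construction~\ref{constr:genArrY}, and expanding the defining $(c+2)\times(c+2)$ determinant of $h_t$ along its last column shows that the coefficient of the variable $U_{c+t}$ equals $\pm\det(a_0,\ldots,a_c)\neq 0$, again by general position of $A$. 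Hence $h_t=0$ is equivalent to $U_{c+t}=L_t(U_1,\ldots,U_c)$ for a uniquely determined affine-linear form $L_t$, so $Y\cap\TT^r$ is the graph of $U\mapsto(L_1(U),\ldots,L_{r-c}(U))$ over an open subset of $\TT^c$. Passing to closures in $Z_{\Delta',\tau}=\CC^c\times\TT^{r-c}$, the graph of this same map over $V_0:=\{U\in\CC^c;\ L_t(U)\neq 0 \text{ for all } t\}$ is already closed in $\CC^c\times\TT^{r-c}$ (a limit point has all $L_t$ nonzero by continuity, hence lies on the graph), so it equals $Y'\cap Z_{\Delta',\tau}$, and $\pi_\tau$ restricts to an isomorphism of this set onto the open subset $V_0$ of $\CC^c$. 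This shows $Y'\subseteq Z_{\Delta'}$ is semi-locally toric, i.e.\ condition~(1), and the theorem follows by the reduction of the first paragraph.

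I expect the main obstacle to be the last two steps: matching, for each maximal cone $\tau$ of the weakly tropical resolution of $\PP_c\subseteq\PP_r$, the lattice decomposition of Construction~\ref{constr:decomp} with explicit coordinates on $Z_{\Delta',\tau}\cong\CC^c\times\TT^{r-c}$, and then extracting from the general-position hypothesis on $A$ the graph description of $Y\cap\TT^r$ and of its closure in the chart — in particular making sure $\pi_\tau$ is an isomorphism onto an \emph{open} subset of $\CC^c$ rather than merely a birational map.
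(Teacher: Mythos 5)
Your proposal is correct and follows essentially the same route as the paper: the paper also reduces to Theorem~\ref{thm:quotKrit} applied to the explicit maximal orbit quotient $X\dashrightarrow\PP_c$ with $\Delta=\Sigma_{\PP_r}$, and its Lemma~\ref{lem:WeakToricGenArr} proves semi-local toricity of $\PP_c\subseteq\PP_r$ by exactly your graph argument (writing points of $Y_\delta$ as $(t,\eta_1(t),\ldots,\eta_{r-c}(t))$ with affine-linear $\eta_i$ obtained from the general position of $A$). Your write-up merely makes explicit two details the paper leaves implicit, namely the determinant expansion showing the coefficient of $U_{c+t}$ is $\pm\det(a_0,\ldots,a_c)\neq 0$ and the verification that the closure in the chart is the graph over $V_0$.
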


\begin{lemma}\label{lem:WeakToricGenArr}
Let $A=(a_0,\ldots,a_r)$ be a matrix as in Construction \ref{constr:R(A,P)} and consider the linear subspace $\PP_c\subseteq \PP_r$ defined via the kernel of $A$, i.e.\ the vanishing set of the relations
$f_1, \ldots, f_{r-c}$, where 
$$ 
f_t
:=
\det
\left[
\begin{array}{ccccc}
a_0 &a_1& \ldots & a_c & a_{c+t}
\\
U_0 &U_1 &\ldots & U_c & U_{c+t}
\end{array}
\right]
\in 
\CC[U_0, \ldots, U_r].
$$
Fix the fan structure $\Delta:=\Sigma_{\PP_r}^{\leq c}$ on $\trop(\PP_c)$. 
Then the weakly tropical resolution of $\PP_c \subseteq \PP_r$ is semi-locally toric. 
\end{lemma}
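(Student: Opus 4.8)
The plan is to exploit how degenerate the combinatorics becomes for this particular fan structure and then to reduce everything to the classical fact that a linear subspace in general position is, locally near a coordinate torus orbit, the graph of a linear map over an open subset of an affine space.

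First I would note that, since $\Delta := \Sigma_{\PP_r}^{\le c}$ (the fixed fan structure on $\trop(\PP_c)$) is a subfan of $\Sigma_{\PP_r}$, the coarsest common refinement $\Sigma_{\PP_r} \sqcap \Delta$ is just $\Delta$ again: the intersection of a cone of $\Sigma_{\PP_r}$ with a cone of $\Delta$ is a common face and hence lies in $\Delta$. So the weakly tropical resolution $\varphi \colon Z' \to \PP_r$ is the open toric immersion with $Z' = Z_\Delta = \PP_r \setminus \bigcup_{|I| = c+1} L_I$, where $L_I := \{z_i = 0 : i \in I\}$ runs over the coordinate subspaces of codimension $c+1$. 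Since the hyperplanes $H_0, \dots, H_r$ are in general position, any $c+1$ of them meet trivially in $\PP_c$, so $\PP_c \cap L_I = \bigcap_{i \in I} H_i = \emptyset$; thus $\PP_c$ already lies inside $Z_\Delta$, its trace on the torus is dense in it, and the proper transform of $\PP_c$ under $\varphi$ is $\PP_c$ itself. Hence $\PP_c \subseteq Z_\Delta$ is a weakly tropical embedding (and a complete one), and it remains to verify the condition of Definition~\ref{def:locallyToric} at every maximal cone of $\Delta$.

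A maximal cone of $\Delta$ is $\sigma = \cone(e_i : i \in I)$ with $I \subsetneq \{0, \dots, r\}$, $|I| = c$, which matches $\dim \PP_c = c$; put $J := \{0, \dots, r\} \setminus I$ and fix some $j_0 \in J$. Taking $\tau = \sigma$ in Construction~\ref{constr:decomp} and the splitting $N = N(\sigma) \oplus \tilde N$ with $\tilde N = \langle e_j : j \in J \setminus \{j_0\}\rangle$, one obtains $Z_\sigma = \{[z] : z_j \ne 0 \text{ for all } j \in J\} \cong \CC^c \times \TT^{r-c}$; the factor $U(\sigma) \cong \CC^c$ is coordinatised by $z_i/z_{j_0}$, $i \in I$ (every cone of the smooth toric variety $\PP_r$ is smooth), and, dehomogenising by $z_{j_0} = 1$, the projection $\pi_\sigma$ becomes $[z] \mapsto (z_i/z_{j_0})_{i \in I}$. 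The key observation is that the restriction of $\pi_\sigma$ to $\PP_c$ equals the restriction to $\PP_c$ of the linear projection $\PP_r \dashrightarrow \PP(\CC^{I \cup \{j_0\}})$ away from the coordinate subspace $\{z_i = 0 : i \in I \cup \{j_0\}\}$. By general position that center misses $\PP_c$ — it cuts out $\bigcap_{i \in I \cup \{j_0\}} H_i$ inside $\PP_c$ — so the projection restricts to an isomorphism of projective spaces $\PP_c \xrightarrow{\sim} \PP(\CC^{I \cup \{j_0\}})$.

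Finally I would translate this back. Under that isomorphism each remaining coordinate $z_j$, $j \in J \setminus \{j_0\}$, pulls back to a linear form $\lambda_j$ in the variables $z_i$, $i \in I \cup \{j_0\}$, with $\lambda_j \ne 0$ since $\PP_c \not\subseteq H_j$. Hence $\pi_\sigma$ carries $\PP_c \cap Z_\sigma$ isomorphically onto $\{x \in \CC^c : \lambda_j(1, x) \ne 0 \text{ for all } j \in J \setminus \{j_0\}\}$, which is the complement of finitely many affine hyperplanes in $\CC^c = U(\sigma)$ and so is open; this is exactly what Definition~\ref{def:locallyToric} asks for. I expect the main obstacle to be merely bookkeeping: verifying that the abstract projection of Construction~\ref{constr:decomp} is the concrete coordinate projection above, uniformly in whether or not $0 \in I$ (so that $N(\sigma) \oplus \tilde N = N$ for the indicated $\tilde N$), and invoking general position at exactly the two spots where a disjointness is claimed.
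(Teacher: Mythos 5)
Your proposal is correct and follows essentially the same route as the paper: both first observe that $\Delta=\Sigma_{\PP_r}^{\leq c}$ is a subfan of $\Sigma_{\PP_r}$, so the weakly tropical resolution is trivial on $\PP_c$, and then verify Definition~\ref{def:locallyToric} on each maximal chart by using linearity and general position to see that $\pi_\sigma$ maps $\PP_c\cap Z_\sigma$ isomorphically onto a hyperplane-arrangement complement in $\CC^c$. The only (cosmetic) differences are that you phrase the chart isomorphism as a linear projection with center disjoint from $\PP_c$, whereas the paper parametrizes $Y_\delta$ as a graph $(t,\eta_1(t),\ldots,\eta_{r-c}(t))$ via the relations $h_t$, and that you treat all maximal cones with explicit bookkeeping while the paper argues exemplarily for $\delta=\cone(e_1,\ldots,e_c)$.
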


\begin{proof}
As the tropical variety of $\PP_c \subseteq \PP_r$ is
a subfan of the fan of $\PP_r$, Tevelev's criterion implies that the weakly tropical resolution is the identity on $\PP_c$. Therefore, we only have to show that $\PP_c \subseteq Z_{\Delta}$ is semi-locally toric. 
Let $\delta \in \Delta$ be a maximal cone. 
We consider the situation exemplarily for $\delta = \cone(e_{1}, \ldots, e_c)$, i.e.\, we have
$Y_{\delta} \subseteq \CC^c \times (\CC^*)^{r-c}$.
By construction
$Y\cap \TT^r$ is given as the vanishing set of the linear equations $h_1,\ldots,h_{r-c}$, where
$$ 
h_t
:=
\det
\left[
\begin{array}{ccccc}
a_0 &a_1& \ldots & a_c & a_{c+t}
\\
1 &U_1 &\ldots & U_c & U_{c+t}
\end{array}
\right]
\in 
\CC[U_1^\pm, \ldots, U_r^\pm].
$$
Therefore, any point in $Y_{\delta}$ can be written as
$(t, \eta_{1}(t), \ldots, \eta_{r-c}(t))$
where $t \in \CC^c$ and the $\eta_i$ are affine linear forms. 
This implies that $\PP_c\subseteq\PP_r$ is semi-locally toric as the projection $\pi_{\delta}$ maps $Y_{\delta}$ isomorphically onto the following open subset of $\CC^c$:
$$\pi_{\delta}(Y_{\delta}) = \left\{t \in \CC^c; \ \eta_i(t) \neq 0 \text{ for } 1 \leq i \leq r-c\right\}.$$
\end{proof}

\begin{proof}[Proof of Theorem \ref{thm:weakTropicalLocallyToric}]
We show that an explicit general arrangement variety $X(A,P,\Sigma) \subseteq Z$ fulfills the conditions of Theorem \ref{thm:quotKrit}.
Due to Construction \ref{constr:genArrY}
the fan ${\Delta_0}$ is a subfan of the defining fan of $\PP_r$. In particular, in the notation of Theorem
\ref{thm:quotKrit} we may choose $\Delta:= \Sigma_{\PP_r}$ and obtain $Y = \PP_c$ as the closure of $Y_0$ in $\PP_r$. As this embedding is defined via the kernel of $A$ we can apply
Lemma~\ref{lem:WeakToricGenArr} and obtain that
the embedding $\PP_c \subseteq \PP_r$ is weakly tropical and semi-locally toric. Thus it is only left to show that
in this situation we meet condition (ii) of Theorem~\ref{thm:quotKrit}. This follows as $\PP_c$ is complete and thus
$|\Delta \sqcap \trop(Y)| = |\trop(Y)|$ holds.
\end{proof}

\section{Explicit description of anticanonical complexes for general arrangement varieties}\label{section:structuralResultsForGeneralArrangementVarieties}
In this section we give an explicit description
of anticanonical complexes of general arrangement
varieties $X:=X(A,P,\Sigma)\subseteq Z$, see Proposition \ref{prop:AOfGArr} and Corollary \ref{cor:AOfGArr}.
After fixing a quasifan structure on $\trop(X)$ we investigate
the fan of the weakly tropical resolution ${\trop(X)\sqcap\Sigma}$,
see Proposition \ref{prop:raysWeaklyTrop}.
In particular, we obtain in Corollary \ref{cor:weakTropGenArrStays} that the weakly tropical resolution of 
an explicit general arrangement variety is again an explicit general arrangement variety.
Applying our description of the anticanonical complexes and our characterization of 
the several singularity types, 
we prove Theorem \ref{thm:3}, 
which gives first bounding conditions
on the exponents $l_{ij}$ occurring in the defining relations of the Cox ring of $X$.
Specializing to torus actions of complexity two, we obtain concrete bounds for the exponents in the defining equations in the log terminal case as stated in Corollary \ref{introcor2}.

In this section let $X:=X(A,P,\Sigma) \subseteq Z$ always be an explicit general arrangement variety of complexity $c$ and let $\trop(X)\subseteq\QQ^{r+s}$ be its tropical variety endowed with the quasifan structure given in Remark~\ref{rem:tropXGenArr}.

\begin{construction}\label{constr:leaves}
Denote by $e_1,\ldots,e_{r+s}$ the canonical basis of $\QQ^{r+s}$ and set $e_0:=-\sum e_i$.
For any subset $I\subseteq\{0,\ldots,r\}$  of $k$ indices we set
$$\lambda_I:=\cone(e_i;\ i\in I) + \lin(e_{r+1},\ldots,
e_{r+s}).
$$ 
If $1\leq k\leq c$ holds, then we have $\lambda_I\in\trop(X)$ and we call 
$\lambda_I$ a {\em $k$-leaf of $\trop(X)$}.
Moreover, the collection of all leafs of $\trop(X)$  determines the {\em lineality space of $\trop(X)$}: 
$$\lambda_\lin:=\bigcap\limits_{\tiny
I\subseteq\{0,\ldots, r\},
|I|\leq c}
 \lambda_I.$$
\end{construction}

\begin{definition}
In the notation of Construction \ref{constr:leaves} we say that
\begin{enumerate}
\item
a cone $\sigma \in \Sigma$ is a 
\emph{leaf cone} if $\sigma \subseteq \lambda_I$ 
holds for a leaf $\lambda_I$ of $\trop(X)$.
\item
a cone $\sigma \in \Sigma$ is called \emph{big} 
if $\sigma \cap \lambda_i^\circ \neq \emptyset$ 
holds for all $1$-leaves $\lambda_i$ of $\trop(X)$.
\end{enumerate}
\end{definition}

Note that any cone $\sigma\in\Sigma$ is either a big or a leaf cone, see \cite[Prop. 5.5]{HaHiWr2019}.
In particular, an explicit general arrangement variety $X(A,P,\Sigma)\subseteq Z$ is weakly tropical if and only if $\Sigma$ consists of leaf cones.

\begin{construction}\label{constr:vtauprime}
Denote by $v_{ij} := P(e_{ij})$ and 
$v_k := P(e_k)$ the columns of~$P$. Consider a pointed cone of the form
$$ 
\sigma
\ = \ 
\cone(v_{0j_0}, \ldots, v_{rj_r})
\ \subseteq \ 
\QQ^{r+s},
$$
that means that $\sigma$ contains exactly one vector
$v_{ij}$ for every $i = 0,\ldots,r$.
We call such a cone $\sigma$ a \emph{$P$-elementary cone}
and associate to it the following numbers
$$
\ell_{\sigma,i} 
\ := \ 
\frac{l_{0j_0} \cdots l_{rj_r}}{l_{ij_i}}
\text{ for } i = 0, \ldots, r,
\qquad
\ell_{\sigma}
\ := \ 
(c-r) l_{0j_0} \cdots l_{rj_r} + \sum_{i=0}^r \ell_{\sigma, i}
$$
Moreover, we set 
$$ 
v_\sigma
\ := \ 
\ell_{\sigma,0} v_{0j_0} + \ldots +  \ell_{\sigma,r} v_{rj_r}
\ \in \ 
\ZZ^{r+s},
\qquad
\varrho_\sigma 
\ := \ 
\QQ_{\ge 0} \cdot v_\sigma
\ \in \ 
\QQ^{r+s},
$$
and denote by $c_\sigma$ the greatest common divisor of the entries of $v_{\sigma}$. 
\end{construction}

Recall that, if $X$ is $\QQ$-Gorenstein with weakly tropical resolution $Z' \rightarrow Z$, then $X'\subseteq Z'$ is semi-locally toric due to Theorem \ref{thm:weakTropicalLocallyToric}. 
Therefore, $X$ admits an anticanonical complex $\mathcal{A}$ as provided by Construction \ref{constr:A},
which is 
locally defined by linear forms 
$u_{\sigma'}\in M_\QQ$.
More precisely we have
$$
A_{\sigma'} 
= 
\mathcal{A}\cap \sigma' 
=  
\sigma' \cap 
\left\{v \in N_\QQ; \ \bangle{u_{\sigma'},v} \geq -1 \right\}.
$$
We call any $u\in M_\QQ$ fulfilling the above equation a \emph{defining linear form} for $A_{\sigma'}$.
In the following we fix the polyhedral complex structure defined by the polyhedra~$A_{\sigma'}$
and call a point $x \in \mathcal{A}$ a \emph{vertex of $\mathcal{A}$} if 
it is a vertex of one of the polyhedra~$A_{\sigma'}$. 

The following proposition gives a description of the linear forms $u_{\sigma'}$ and thus the anticanonical complex $\mathcal{A}$ in terms of the numbers defined above.

\begin{proposition}\label{prop:AOfGArr}
Let $X(A,P, \Sigma)\subseteq Z$ be a $\QQ$-Gorenstein explicit general arrangement variety. Then
any $u \in M_{\QQ}$ is a defining linear form for $A_{\sigma'}$ if and only if it fulfills the following conditions:
$$
\bangle{u_{\sigma'}, v} =
\begin{cases}
-1, & \text{if } v = v_{\varrho}, \text{ where } \varrho \in (\sigma')^{(1)} \cap \Sigma^{(1)}.
\\
- \ell_{\sigma}, &\text{if } v= v_{\sigma},  \text{ where } \sigma \in \Sigma \text{ is a $P$-elementary cone
    with } \varrho_{\sigma} \preceq \sigma'. 
\end{cases}
$$
\end{proposition}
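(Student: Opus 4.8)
The plan is to compute the discrepancies $\mathrm{discr}_X(D^\varrho_{X'})$ along the rays $\varrho$ of the weakly tropical resolution $\Sigma' = \Sigma \sqcap \trop(X)$ explicitly in terms of the defining data $(A,P)$, and to read off from Remark~\ref{rem:Aindepoffam} that a linear form $u_{\sigma'}$ is defining for $A_{\sigma'}$ precisely when $\langle u_{\sigma'}, v_\varrho\rangle = -1 - \mathrm{discr}_X(D^\varrho_{X'})$ for every ray $\varrho \preceq \sigma'$. Since the anticanonical complex is supported inside $\trop(X)$, which by Remark~\ref{rem:tropXGenArr} has lineality space $\lambda_{\mathrm{lin}} = \lin(e_{r+1},\ldots,e_{r+s})$ and leaves $\lambda_I$, every maximal cone $\sigma'$ of $\Sigma'$ lies in a single $c$-leaf $\lambda_I$; a $P$-elementary cone $\sigma = \cone(v_{0j_0},\ldots,v_{rj_r})$ dominating $\sigma'$ supplies the ``missing'' rays outside $\Sigma$ via $\varrho_\sigma = \QQ_{\ge0}\cdot v_\sigma$. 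So the rays $\varrho \preceq \sigma'$ split into two types: those already present in $\Sigma^{(1)}$, carrying the toric divisors $T_\varrho$, and those of the form $\varrho_\sigma$; for the former the discrepancy is $0$ (the resolution does not touch these divisors) giving $\langle u, v_\varrho\rangle = -1$, and for the latter one must compute $\mathrm{discr}_X(D^{\varrho_\sigma}_{X'}) = \ell_\sigma - 1$, giving $\langle u, v_\sigma\rangle = -\ell_\sigma$.

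First I would set up the local picture: fix a maximal $\sigma' \in \Sigma'$, the leaf $\lambda_I$ containing it (with $|I| = c$ after possibly enlarging), and the $P$-elementary cone $\sigma$ dominating $\sigma'$. Then I would use the explicit form of the relations $g_t$ in $\mathcal{R}(X)$ — each $g_t = a_{t,0}T_0^{l_0} + \ldots + a_{t,c}T_c^{l_c} + a_{t,c+t}T_{c+t}^{l_{c+t}}$ — to compute a canonical divisor of $X$ in the toric chart corresponding to $\sigma$, via adjunction for the complete intersection $X \subseteq Z$: $k_X = (k_Z + \sum_t [g_t])|_X$ up to linear equivalence, where $[g_t]$ is the class of the divisor of $g_t$. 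Pulling this back through the weakly tropical resolution and evaluating the multiplicity along $D^{\varrho_\sigma}_{Z'}$ amounts to an elementary computation with the vector $v_\sigma = \sum_i \ell_{\sigma,i} v_{ij_i}$: the multiplicity of $\div(\chi^u)$ along $\varrho_\sigma$ is $\langle u, v_\sigma\rangle$, the multiplicity of the toric canonical $k_{Z'}$ is $-1$ on each new ray, and the multiplicity of $\varphi^*[g_t]$ along $\varrho_\sigma$ is $\min$ over the terms of $g_t$ of their orders, which for the monomial $T_i^{l_i}$ evaluates on $v_\sigma$ to $l_{ij_i}\cdot\ell_{\sigma,i} = l_{0j_0}\cdots l_{rj_r}$ — the same value for every $i$, hence the tropicalization is attained by all $c+2$ terms simultaneously, and a careful count of how the $(c-r)$ shift and the sum $\sum_i \ell_{\sigma,i}$ combine yields exactly $\ell_\sigma$. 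This is where the definition $\ell_\sigma = (c-r)l_{0j_0}\cdots l_{rj_r} + \sum_i \ell_{\sigma,i}$ comes from.

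The main obstacle I expect is the bookkeeping in this discrepancy computation: one must correctly account for the $r-c$ relations $g_t$, the fact that only the variables $T_{ij}$ with $i \le c$ appear in every relation while $T_{c+t,j}$ appears in only one, and the passage from the Cox ring upstairs to $X$ downstairs (the quotient by the characteristic quasitorus), which introduces the $\ell_{\sigma,i}$ as the coefficients making $v_\sigma$ an integral combination. A clean way to organize this is to work in the chart $\hat Z_{\hat\sigma}$ of Construction~\ref{constr:XAPSigma}, compute there (where everything is a genuine toric adjunction plus transverse vanishing of the $g_t$), and descend; the key identity is that $P(\ell_{\sigma,i} e_{ij_i}) = \ell_{\sigma,i} v_{ij_i}$ and $\sum_i \ell_{\sigma,i} e_{ij_i}$ lands on $v_\sigma$ after applying $P$, with the monomial $T_i^{l_i}$ having order $l_{0j_0}\cdots l_{rj_r}$ along the new ray. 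Once the two multiplicity formulas $\langle u, v_\varrho\rangle = -1$ and $\langle u, v_\sigma\rangle = -\ell_\sigma$ are established, the ``if and only if'' is immediate: the cone $\sigma'$ is spanned (as a cone with lineality) exactly by the $v_\varrho$ with $\varrho \in (\sigma')^{(1)} \cap \Sigma^{(1)}$ together with the $v_\sigma$ for $P$-elementary $\sigma$ with $\varrho_\sigma \preceq \sigma'$, so these conditions determine $\langle u_{\sigma'}, -\rangle$ on a spanning set of $\lin_\QQ(\sigma')$, hence pin down $u_{\sigma'}$ modulo the annihilator of that span — which is precisely the ambiguity allowed in a ``defining linear form'' — and conversely any such $u$ visibly satisfies $A_{\sigma'} = \sigma' \cap \{\langle u, -\rangle \ge -1\}$.
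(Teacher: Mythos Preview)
Your approach is essentially the same as the paper's: reduce the characterization of $u_{\sigma'}$ to the formula $\mathrm{discr}_X(D^{\varrho}_{X'}) = -1 - \langle u_{\sigma'}, v_\varrho\rangle$ from Remark~\ref{rem:Aindepoffam}, split the rays of $\sigma'$ into those in $\Sigma^{(1)}$ (discrepancy zero) and those of the form $\varrho_\sigma$ for $P$-elementary $\sigma$, and for the latter compute the discrepancy via the complete-intersection/adjunction description of $k_X$. The paper packages the discrepancy computation into a separate Proposition~\ref{prop:disc}, using the explicit divisor $D_{\varrho_\sigma} = \sum_j (r-c)l_{0j} D_{\varrho_{0j}} - \sum_{\varrho'} D_{\varrho'}$ on $Z'$ as a member of the toric canonical $\varphi$-family; your ``order of $T_i^{l_i}$ along $v_\sigma$ equals $l_{0j_0}\cdots l_{rj_r}$'' argument is the same calculation in different clothing.

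Two small points to tighten. First, you assert that the rays of $\sigma'$ are exactly of the two types, but this is a nontrivial structural fact that the paper isolates as Proposition~\ref{prop:raysWeaklyTrop} and proves via Lemmas~\ref{lem:cite} and~\ref{lem:raysBigCones}; a general refinement $\Sigma \sqcap \trop(X)$ could in principle produce rays not lying along any $\varrho_\sigma$, and one must check that for the specific leaf structure of $\trop(X)$ in the arrangement case every new ray lands in $\lambda_{\mathrm{lin}}$ and equals some $\varrho_\sigma$. Second, the discrepancy along $\varrho_\sigma$ is $c_\sigma^{-1}\ell_\sigma - 1$, not $\ell_\sigma - 1$, since $v_\sigma$ need not be primitive; your conclusion $\langle u, v_\sigma\rangle = -\ell_\sigma$ is nonetheless correct because $v_\sigma = c_\sigma v_{\varrho_\sigma}$.
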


\begin{corollary}\label{cor:AOfGArr}
Let $X:=X(A,P, \Sigma)\subseteq Z$ be a $\QQ$-Gorenstein general arrangement variety. Then the vertices of the anticanonical complex of $X\subseteq Z$ are the origin, the primitive ray generators of
$\Sigma$ and the points
$v_{\sigma}':= \ell_\sigma^{-1}v_\sigma$, where $\sigma \in \Sigma$ is a $P$-elementary cone and $\ell_{\sigma} > 0$ holds. 
Moreover, if $\ell_{\sigma} > 0$ holds for all $P$-elementary cones $\sigma \in \Sigma$, then 
$X$ is log terminal and each polyhedron $A_{\sigma'}$ is a polytope and therefore determined by the above vertices. 
\end{corollary}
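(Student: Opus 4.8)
The plan is to derive the corollary directly from the explicit description of the defining linear forms $u_{\sigma'}$ given in Proposition \ref{prop:AOfGArr}, combined with the characterization of singularity types in Remark \ref{rem:charAKK}. First I would fix a cone $\sigma' \in \Sigma'$ of the weakly tropical resolution and recall that $A_{\sigma'} = \sigma' \cap \{v; \langle u_{\sigma'}, v\rangle \geq -1\}$ is a polyhedron whose faces are, by Construction \ref{constr:A}, the cells of $\mathcal{A}$. A vertex of $\mathcal{A}$ is by definition a vertex of some $A_{\sigma'}$; since $A_{\sigma'}$ is cut out of the pointed cone $\sigma'$ by a single affine half-space, its vertices are either the origin (the apex of $\sigma'$, which lies in $A_{\sigma'}$ because $\langle u_{\sigma'}, 0\rangle = 0 \geq -1$) or points lying on the bounding hyperplane $\langle u_{\sigma'}, \cdot\rangle = -1$ that are extreme in $\sigma'$. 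By Proposition \ref{prop:raysWeaklyTrop} (which I may invoke), the rays of $\Sigma' = \trop(X) \sqcap \Sigma$ are precisely the rays $\varrho \in \Sigma^{(1)}$ together with the rays $\varrho_\sigma$ for $P$-elementary cones $\sigma \in \Sigma$; so every vertex of $A_{\sigma'}$ lying on the bounding hyperplane sits on one of these rays.

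Next I would evaluate $u_{\sigma'}$ on the primitive generators of these rays using Proposition \ref{prop:AOfGArr}. For a ray $\varrho \in (\sigma')^{(1)} \cap \Sigma^{(1)}$ we have $\langle u_{\sigma'}, v_\varrho\rangle = -1$, so $v_\varrho$ itself lies on the bounding hyperplane and is the unique vertex of $A_{\sigma'}$ on that ray; this gives the primitive ray generators of $\Sigma$ as vertices. For a $P$-elementary cone $\sigma$ with $\varrho_\sigma \preceq \sigma'$ we have $\langle u_{\sigma'}, v_\sigma\rangle = -\ell_\sigma$. If $\ell_\sigma > 0$, then the point of $\varrho_\sigma$ on the bounding hyperplane is $v_\sigma' := \ell_\sigma^{-1} v_\sigma$, which is therefore a vertex of $A_{\sigma'}$ and hence of $\mathcal{A}$; here one also uses that $\ell_\sigma$ need not be the gcd-normalization, so $v_\sigma'$ is the intersection point of the ray $\varrho_\sigma$ with $\partial\mathcal{A}$ regardless of $c_\sigma$. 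If $\ell_\sigma \leq 0$, then $\langle u_{\sigma'}, v\rangle = -\ell_\sigma/\ell_\sigma \cdot \ldots \geq -1$ fails to force boundedness along $\varrho_\sigma$: concretely $\langle u_{\sigma'}, t v_\sigma\rangle = -t\ell_\sigma \geq -1$ for all $t \geq 0$ when $\ell_\sigma \leq 0$, so the whole ray $\varrho_\sigma$ lies in $A_{\sigma'}$ and contributes no finite vertex beyond the origin. Assembling these cases over all $\sigma'$ yields exactly the claimed list of vertices.

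For the final assertion, suppose $\ell_\sigma > 0$ for every $P$-elementary cone $\sigma \in \Sigma$. Then for each $\sigma'$ every ray of $\sigma'$ meets the bounding hyperplane $\langle u_{\sigma'}, \cdot\rangle = -1$ in a finite point (the rays of $\sigma'$ being among the $\varrho \in \Sigma^{(1)}$ and the $\varrho_\sigma$), so $A_{\sigma'}$, being a polyhedron contained in the pointed cone $\sigma'$ with all extreme rays truncated, is bounded, i.e.\ a polytope; it is the convex hull of the origin and the intersection points of its rays with $\partial\mathcal{A}$, which are among the vertices listed above. Since $\mathcal{A} = \bigcup_{\sigma'} A_{\sigma'}$ is a finite union of polytopes it is bounded, and by Remark \ref{rem:charAKK}(i') this is equivalent to $X$ being log terminal. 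I expect the main obstacle to be the careful bookkeeping in matching the rays of $\Sigma'$ with the two cases of Proposition \ref{prop:AOfGArr} — in particular verifying that every vertex of every $A_{\sigma'}$ is accounted for and that no spurious vertices arise from intersections of the bounding hyperplane with lower-dimensional faces of $\sigma'$ — which requires knowing precisely the ray structure of $\Sigma'$ from Proposition \ref{prop:raysWeaklyTrop}.
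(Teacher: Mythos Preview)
Your proposal is correct and follows essentially the same approach as the paper's proof: identify the vertices of each $A_{\sigma'}$ as the origin together with the intersection points of the bounding hyperplane $\langle u_{\sigma'},\cdot\rangle = -1$ with the rays of $\sigma'$, use Proposition~\ref{prop:raysWeaklyTrop} to list those rays, apply Proposition~\ref{prop:AOfGArr} to compute the intersection points, and invoke Remark~\ref{rem:charAKK}(i') for the log-terminality statement. The paper compresses all of this into a few lines, while you spell out the case distinction $\ell_\sigma > 0$ versus $\ell_\sigma \le 0$ and the boundedness argument more explicitly, but the logical structure is the same.
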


\begin{example}
Consider the affine explicit general arrangement variety $X:=X(A,P,\Sigma) \subseteq Z$ where $A$ and $P$ are as follows and $\Sigma$ is defined by the maximal cone $\sigma$:
$$A=\left[\begin{array}{ccc}
1&0&-1\\
0&1&-1
\end{array}\right],\qquad P=\left[\begin{array}{ccc}
-3&4&0\\
-3&0&4\\
1&1&1
\end{array}\right]
,\qquad \sigma=\cone(v_{01},v_{11},v_{21}).$$
Then $\ell_{\sigma} = -8$ holds, we have $v_\sigma' = (0,0,-5)$ holds and the anticanonical complex is not bounded. In particular, $X$ is not log terminal:

\begin{center}
\tdplotsetmaincoords{50}{110}
\begin{tikzpicture}
[tdplot_main_coords,scale=0.15]
\tdplotsetrotatedcoords{60}{0}{0}

%trop(X)
\draw[thick,tdplot_rotated_coords,-] (-10,-10,-10) -- (-10,-10,10) -- (0,0,10) -- (0,0,-10) -- (-10,-10,-10){};
\draw[thick,tdplot_rotated_coords,-] (0,0,10) -- (10,0,10) -- (10,0,-10) -- (0,0,-10) -- (0,0,10){};
\draw[thick,tdplot_rotated_coords,-] (0,0,10) -- (0,10,10) -- (0,10,-10) -- (0,0,-10) -- (0,0,10){};

%v_rho
\draw[thick,tdplot_rotated_coords,-] (0,0,0) -- (-10,-10,10/3){};
\draw[thick,tdplot_rotated_coords,-] (0,0,0) -- (10,0,2.5){};
\draw[thick,tdplot_rotated_coords,-] (0,0,0) -- (0,10,2.5) {};

%Hyperebenen vom AKK
\draw[thick,tdplot_rotated_coords,-] (0,0,-5) -- (-45/6,-45/6,10) {};
\draw[thick,tdplot_rotated_coords,-] (0,0,-5) -- (10,0,10) {};
\draw[thick,tdplot_rotated_coords,-] (0,0,-5) -- (0,10,10) {};

%AKK einfärben
\draw[thick,tdplot_rotated_coords,fill=black!20, opacity=0.6] (0,0,0) -- (-3,-3,1) -- (-45/6,-45/6,10) -- (0,0,10) -- (0,0,0){};
\draw[thick,tdplot_rotated_coords,fill=black!20, opacity=0.6] (0,0,0) -- (4,0,1) -- (10,0,10) -- (0,0,10) -- (0,0,0){};
\draw[thick,tdplot_rotated_coords,fill=black!20, opacity=0.6] (0,0,0) -- (0,4,1) -- (0,10,10) -- (0,0,10) -- (0,0,0){};

%Labels
\node[tdplot_rotated_coords] (A) at (-2.5,-2.5,6.5) {$\mathcal{A}$};
\node[tdplot_rotated_coords] (vsigma) at (-1,-1,-7) {$v_\sigma'$};
\node[tdplot_rotated_coords] (rho1) at (-12,-12,10/3) {$\varrho_{01}$};
\node[tdplot_rotated_coords] (rho2) at (15,0,4) {$\varrho_{11}$};
\node[tdplot_rotated_coords] (rho3) at (0,15,2.5) {$\varrho_{21}$};
\end{tikzpicture}
\end{center}
\end{example}

\begin{example}\label{ex:RAP4}
Consider the variety $X:=X(A,P,\Sigma)$ from 
Examples \ref{ex:RAP1}, \ref{ex:RAP2} and \ref{ex:RAP3}. 
We use Proposition \ref{prop:AOfGArr} and Corollary \ref{cor:AOfGArr} to compute the anticanonical complex $\mathcal{A}$ of $X$:
Its vertices are given by the columns $v_{01},v_{02},v_{11},v_{21},v_{31}$ of $P$ and the points
in the lineality space
$$v_{\mathrm{lin 1}}=[0,0,0,1/5],\qquad v_{\mathrm{lin 2}}= [0,0,0,-1/3].$$
The anticanonical complex $\mathcal{A}$ of $X$ has the following $15$ maximal polytopes:
$$
\conv(0,v_{01},v_{02},v_{i1}),\ \conv(0,v_{01},v_{i1},v_{\mathrm{lin 1}}),\ 
\conv(0,v_{02},v_{i1},v_{\mathrm{lin 2}}),\  1\leq i\leq 3,
$$
$$
\conv(0,v_{i1},v_{j1},v_{\mathrm{lin 1}}),\ 
\conv(0,v_{i1},v_{j1},v_{\mathrm{lin 2}}),\ 
1\leq i<j\leq 3.
$$
Besides the origin and the primitive ray generators of $\Sigma$ the anticanonical complex $\mathcal{A}$ of $X$ contains precisely the following lattice points:
$$[0,0,1,0], \quad [1,1,0,1],\quad [1,0,2,1],\quad [0,1,2,1],$$
$$[0,-1,-1,-1],\quad [-1,0,-1,-1],\quad[-1,-1,0,-1],\quad [-1,-1,1,-1].$$
It turns out that {\small$[0,0,0,0]$} is the only lattice point in the relative interior of $\mathcal{A}$ and 
therefore $X$ is a canonical Gorenstein Fano  explicit general arrangement variety of
dimension three, complexity two and Picard number one.
\end{example}

\begin{remark}\label{rem:refinement}
Consider two fans $\Sigma_1$ and $\Sigma_2$ in $\QQ^n$. 
Then the common refinement
$\Sigma_1 \sqcap \Sigma_2$ consists of the cones 
$\sigma_1 \cap \sigma_2$ with $\sigma_i\in\Sigma_i$ for $i=1,2$.
Let $\tau \preceq \sigma_1 \cap \sigma_2$
be any face. Then there exist faces
$\tau_i \preceq \sigma_i$ 
such that 
$\tau = \tau_1 \cap\tau_2$ holds.
\end{remark}

\begin{proposition}
\label{prop:raysWeaklyTrop}
Let $X(A,P, \Sigma)\subseteq Z$ be an explicit general arrangement variety. 
Then the set of rays of $\Sigma \sqcap \trop(X)$ is given by:
$$(\Sigma \sqcap \trop(X))^{(1)} = 
\Sigma^{(1)} \cup \left\{\varrho_\sigma; \ \sigma \in \Sigma \text{ is $P$-elementary}\right\}.
$$ 
\end{proposition}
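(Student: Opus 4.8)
The claim is a description of the rays of the common refinement $\Sigma \sqcap \trop(X)$. The inclusion $\supseteq$ is the easy direction and the inclusion $\subseteq$ is the main content. Throughout I would use the product description $|\trop(X)| = |\Sigma_{\PP_r}^{\le c}| \times \QQ^s$ from Remark~\ref{rem:tropXGenArr}, and the dichotomy that every cone $\sigma \in \Sigma$ is either a leaf cone or a big cone (\cite[Prop. 5.5]{HaHiWr2019}).

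\textbf{First, the inclusion $\supseteq$.} A ray $\varrho \in \Sigma^{(1)}$ is generated by a column $v_{ij}$ or $v_k$ of $P$; by construction of $\trop(X)$ and the leaf cones $\lambda_I$ (Construction~\ref{constr:leaves}), each such ray lies inside some leaf $\lambda_{\{i\}} + \lin(e_{r+1},\dots,e_{r+s})$ of $\trop(X)$, hence $\varrho$ is a cone of $\Sigma \sqcap \trop(X)$. For a $P$-elementary cone $\sigma = \cone(v_{0j_0},\dots,v_{rj_r})$, I would check that the vector $v_\sigma = \sum_i \ell_{\sigma,i} v_{ij_i}$ lies in $\sigma$ (clear, since all $\ell_{\sigma,i} > 0$) and simultaneously in $\trop(X)$: applying $P_1$ sends $v_{ij_i}$ to $l_{ij_i} e_i' \in \QQ^r$ (with $e_0' = -\sum e_i'$), so $P_1(v_\sigma) = (l_{0j_0}\cdots l_{rj_r})(e_0' + \dots + e_r') = 0$, i.e.\ $v_\sigma \in \ker(P_1)$, which is contained in $|\trop(X)|$ because the lineality space $\QQ^s$ of $\trop(X)$ sits inside $\trop(X)$ — wait, more precisely $\ker(P_1)$ is the lineality direction and lies in every cone of $\trop(X)$, so $\varrho_\sigma = \QQ_{\ge 0} v_\sigma \subseteq \sigma \cap \tau$ for a suitable $\tau \in \trop(X)$, hence $\varrho_\sigma$ is a ray of the refinement.

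\textbf{Second, the inclusion $\subseteq$.} Let $\varrho$ be a ray of $\Sigma \sqcap \trop(X)$, so $\varrho = \sigma \cap \tau$ (or a face thereof — by Remark~\ref{rem:refinement}, $\varrho = \sigma_0 \cap \tau_0$ for faces $\sigma_0 \preceq \sigma$, $\tau_0 \preceq \tau$) with $\sigma \in \Sigma$ and $\tau \in \trop(X)$. I would split according to whether $\sigma_0$ is a leaf cone or a big cone. If $\sigma_0$ is a leaf cone, it is already contained in $|\trop(X)|$, so intersecting with $\tau$ does not cut it down in new ways and $\varrho$ must be a ray of $\sigma_0$, hence of $\Sigma$; this gives $\varrho \in \Sigma^{(1)}$. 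If $\sigma_0$ is a big cone, then $\sigma_0$ meets the relative interior of every $1$-leaf $\lambda_i$, so $\sigma_0 = \cone(v_{0j_0},\dots,v_{rj_r}, \dots)$ must contain at least one ray generator over each index $i = 0,\dots,r$ (using the structure of $P$: the first $r$ rows of the columns over block $i$ point in the $e_i$-direction pattern); the key point is that $\tau \in \trop(X) = \Sigma_{\PP_r}^{\le c} \times \QQ^s$ is (up to the lineality space) a cone spanned by at most $c$ of the $e_i'$, and pulling back, $P_1^{-1}(\tau) \cap \sigma_0$ is cut out precisely along the hyperplanes that force a balancing among the blocks not in the index set of $\tau$. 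Tracking which extremal ray survives this intersection, one is forced onto the ray generated by the balanced combination $v_\sigma$ of the generators $v_{ij_i}$ chosen one per block; i.e.\ $\varrho = \varrho_{\sigma'}$ for the $P$-elementary subcone $\sigma' = \cone(v_{0j_0},\dots,v_{rj_r}) \preceq \sigma_0$.

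\textbf{Main obstacle.} The delicate step is the big-cone case: showing that the extremal rays of $\sigma_0 \cap P_1^{-1}(\tau)$ are \emph{exactly} the old rays of $\sigma_0$ lying in $|\trop(X)|$ together with the rays $\varrho_{\sigma'}$ of $P$-elementary subcones. This requires a careful linear-algebra analysis of how the tropical balancing conditions (which over $\PP_r$ just say ``lie in the $c$-skeleton'') pull back through $P_1$ and intersect a cone generated by columns of $P$. The combinatorics of the index set $I$ with $|I| \le c$ defining $\tau$, versus the full set $\{0,\dots,r\}$ of blocks that a big cone touches, is what produces the vector $v_\sigma$ with its specific exponents $\ell_{\sigma,i} = (l_{0j_0}\cdots l_{rj_r})/l_{ij_i}$: these are exactly the coefficients making $\sum_i \ell_{\sigma,i} \cdot l_{ij_i} e_i'$ a multiple of $e_0' + \dots + e_r' = 0$. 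I expect the cleanest route is to reduce to a single maximal big cone $\sigma$, intersect with one maximal $\tau$ at a time, and identify $\sigma \cap \tau$ explicitly as $\cone(\varrho_{\sigma'} \text{ and the leaf faces of } \sigma)$, then read off the rays; the general face $\sigma_0 \preceq \sigma$ and general $\tau_0 \preceq \tau$ then follow by taking faces.
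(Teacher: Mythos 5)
Your overall architecture matches the paper's: the inclusion $\supseteq$ via the computation $P_1(v_\sigma)=0$, and the inclusion $\subseteq$ via Remark~\ref{rem:refinement} and the leaf/big dichotomy. But two steps are genuinely incomplete. First, in the $\supseteq$ direction, showing $\varrho_\sigma\subseteq\sigma\cap\tau$ for a suitable $\tau\in\trop(X)$ only places $\varrho_\sigma$ inside a cone of the refinement; it does not make $\varrho_\sigma$ itself a cone, hence a ray, of $\Sigma\sqcap\trop(X)$. What is needed is the exact equality $\varrho_\sigma=\sigma\cap\lambda_{\lin}$, which the paper takes from Lemma~\ref{lem:cite} (citing the proof of \cite[Prop.~3.8]{ArBrHaWr2018}); your computation gives only the inclusion ``$\subseteq$'' of that equality, and one must still rule out that $\sigma\cap\lambda_{\lin}$ is larger than a ray (this uses that a $P$-elementary cone is simplicial of dimension $r+1$, so its intersection with the $s$-dimensional lineality space is at most one-dimensional).

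Second, and more seriously, the big-cone case of ``$\subseteq$'' --- which you yourself flag as the main obstacle --- is exactly the step you do not carry out, and the ``track which extremal rays survive'' strategy is not how the paper resolves it. Lemma~\ref{lem:raysBigCones} avoids any analysis of which rays of $\sigma_0\cap P_1^{-1}(\tau)$ are extremal: writing $\varrho=\sigma_\varrho\cap\tau_\varrho$ with minimal faces, if $\sigma_\varrho$ is big it contains a $P$-elementary cone $\sigma_1$, and since $\lambda_{\lin}$ lies in every cone of $\trop(X)$ one gets
$$
\varrho_{\sigma_1}\ =\ \sigma_1\cap\lambda_{\lin}\ \subseteq\ \sigma_\varrho\cap\tau_\varrho\ =\ \varrho,
$$
whence equality because both sides are rays; Lemma~\ref{lem:cite}~(ii) then shows the resulting $P$-elementary cone is unambiguous. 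This containment argument is the missing idea that replaces the delicate linear algebra you anticipate. As written, your proposal establishes the easy inclusion modulo a gap and leaves the hard inclusion as an announced plan rather than a proof.
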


\begin{lemma}\label{lem:cite}
Let $\sigma \in \Sigma$ be a big cone.
\begin{enumerate}
    \item If $\sigma_1\subseteq \sigma$ is a $P$-elementary cone, then
    $\sigma_1$ is simplicial, we have $v_{\sigma_1} \in \sigma_1^\circ$
    and $\varrho_{\sigma_1} = \sigma_1 \cap \lambda_\mathrm{lin}$ holds.
    \item If $\varrho_{\sigma_1} = \varrho_{\sigma_2}$
    holds for any two $P$-elementary cones $\sigma_1, \sigma_2 \subseteq \sigma$,
    then $\sigma$ is $P$-elementary. In particular, we have $\sigma_1 = \sigma_2 = \sigma.$
\end{enumerate}
\end{lemma}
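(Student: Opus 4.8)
The plan is to reduce everything to the leaf structure of $\trop(X)$, via the identities $P_1(v_{ij}) = l_{ij}e_i$ for $i \ge 1$ and $P_1(v_{0j}) = -l_{0j}(e_1 + \ldots + e_r)$, where $v_{ij} = P(e_{ij})$, $v_k = P(e_k)$ and $P_1$ is the projection of Construction~\ref{constr:genArrY}. These give $\lambda_\lin = \ker(P_1)$ and $v_{ij} \in \lambda_{\{i\}}^\circ$, and make the images $P_1(v_{0j_0}),\ldots,P_1(v_{rj_r})$ span $\QQ^r$ subject to the single relation $l_{0j_0}^{-1}P_1(v_{0j_0}) + \ldots + l_{rj_r}^{-1}P_1(v_{rj_r}) = 0$. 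I use these facts throughout.

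For (i), let $\sigma_1 = \cone(v_{0j_0},\ldots,v_{rj_r})$. Applying $P_1$ to a relation $\sum_i a_i v_{ij_i} = 0$ forces $a_i l_{ij_i}$ to be independent of $i$, say equal to $t$, so $t \cdot \sum_i l_{ij_i}^{-1} v_{ij_i} = 0$; if $t \ne 0$ this is a nontrivial relation with strictly positive coefficients among the generators of the pointed cone $\sigma_1$, which is impossible, so $t = 0$ and $\sigma_1$ is simplicial. As $\ell_{\sigma_1,i} = l_{ij_i}^{-1}l_{0j_0}\cdots l_{rj_r} > 0$, the vector $v_{\sigma_1} = \sum_i \ell_{\sigma_1,i}v_{ij_i}$ is a strictly positive combination of the generators of the now simplicial cone, hence $v_{\sigma_1} \in \sigma_1^\circ$; and $P_1(v_{\sigma_1}) = 0$ by the relation above, so $\varrho_{\sigma_1} \subseteq \sigma_1 \cap \lambda_\lin$. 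Conversely, any $v = \sum_i a_i v_{ij_i}$ in $\sigma_1 \cap \ker(P_1)$ satisfies, by the same computation, $a_i = t\,l_{ij_i}^{-1}$ for some $t \ge 0$, hence $v \in \varrho_{\sigma_1}$; this proves $\varrho_{\sigma_1} = \sigma_1 \cap \lambda_\lin$. (Bigness of $\sigma$ plays no essential role here beyond ensuring that $\sigma$ can contain a $P$-elementary subcone at all, since a leaf cone cannot.)

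For (ii), write $\sigma_2 = \cone(v_{0j_0'},\ldots,v_{rj_r'})$, put $\varrho := \varrho_{\sigma_1} = \varrho_{\sigma_2}$ and suppose $\sigma_1 \ne \sigma_2$. By (i), $v_{\sigma_1}$ and $v_{\sigma_2}$ both lie on $\varrho$ and in the respective $\sigma_m^\circ$; if $\tau \preceq \sigma$ is the unique face with $v_{\sigma_1} \in \tau^\circ$, then $\tau^\circ \supseteq \varrho \setminus \{0\} \ni v_{\sigma_2}$, and face-absorption ($x + y \in \tau \preceq \sigma$, $x,y \in \sigma$ $\Rightarrow$ $x,y \in \tau$) forces every generator of $\sigma_1$ and of $\sigma_2$ into $\tau$, so $\sigma_1,\sigma_2 \subseteq \tau$; since each such generator lies in some $\lambda_{\{i\}}^\circ$, the face $\tau$ is again big. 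Writing $v_{\sigma_1} = \mu v_{\sigma_2}$ with $\mu > 0$, and exchanging $\sigma_1 \leftrightarrow \sigma_2$ if necessary so that $\mu\,(l_{0j_0}\cdots l_{rj_r})/(l_{0j_0'}\cdots l_{rj_r'}) \ge 1$, I would expand both sides in the respective generators, cancel the contributions from the indices where $\sigma_1$ and $\sigma_2$ agree, and read off that at an index $i^\ast$ where they differ the column $v_{i^\ast j_{i^\ast}}$ lies in $\sigma_2$ but is not one of its extreme rays. Since $\QQ_{\ge 0}v_{i^\ast j_{i^\ast}}$ is a ray of $\Sigma$ contained in $\sigma_2$, hence a face of $\sigma_2$, this would make $\sigma_2$ contain two columns $v_{ij}$ with index $i^\ast$, contradicting that $\sigma_2$ is $P$-elementary. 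Thus $\sigma_1 = \sigma_2$; it is then the unique $P$-elementary subcone of $\sigma$, and since $\sigma$ is big while every ray of $\sigma$ sits in some $\lambda_{\{i\}}^\circ$ or in $\lambda_\lin$, one concludes $\sigma = \sigma_1$, i.e.\ $\sigma$ is $P$-elementary.

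The hard part is the extraction of non-extremality from $v_{\sigma_1} = \mu v_{\sigma_2}$ when $\sigma_1$ and $\sigma_2$ differ in more than one index: there the partial sums produced by the cancellation step carry mixed signs, so one must either engineer a reduction to the single-index case — which the identity $\varrho_{\sigma_1} = \varrho_{\sigma_2}$ does not obviously survive — or wring the needed sign information directly out of $P_1$ and the positivity of the $l_{ij}$, exploiting that a $P$-elementary cone contains exactly one $v_{ij}$ per index. The companion claim that the big cone $\sigma$ carries no further rays (in particular no lineality rays $v_k$, which never occur in $P$-elementary cones) is the other point requiring care, and is where I would use the explicit shape of the cones of $\Sigma$ from Construction~\ref{constr:XAPSigma} together with Proposition~\ref{prop:raysWeaklyTrop}.
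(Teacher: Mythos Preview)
The paper does not prove this lemma directly; it cites \cite[Prop.~3.8(iii),(iv)]{ArBrHaWr2018} and remarks that the argument there depends only on the shape of the matrix $P$. So there is no in-paper argument to compare against.

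Your proof of~(i) is correct and complete. Projecting via $P_1$ shows that the only linear relation among $v_{0j_0},\ldots,v_{rj_r}$ has all coefficients of one sign, which is impossible inside the pointed cone $\sigma$; simpliciality, $v_{\sigma_1}\in\sigma_1^\circ$, and $\sigma_1\cap\lambda_{\lin}=\varrho_{\sigma_1}$ all follow. This is more than the paper itself supplies here.

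For~(ii), both gaps you flag are real and your outline does not close them. The first---extracting a contradiction from $v_{\sigma_1}=\mu v_{\sigma_2}$ when $\sigma_1$ and $\sigma_2$ differ in several indices---is not just bookkeeping: the big cone $\sigma$ need not be simplicial, so the partial sums in your cancellation do not carry a usable sign, and the ``non-extremality'' you aim for does not follow from the relation alone. The second gap is fatal to your plan as stated: you propose to exclude further rays of $\sigma$ (in particular the lineality columns $v_k$) via Proposition~\ref{prop:raysWeaklyTrop}, but the proof of that proposition passes through Lemma~\ref{lem:raysBigCones}, which in turn invokes the present Lemma~\ref{lem:cite}(ii). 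That is circular. Note also that the hypothesis of~(ii) is trivially met whenever $\sigma$ has a \emph{single} $P$-elementary subcone, which by itself does not rule out an extra ray $v_k$; a self-contained argument therefore needs input beyond what you have assembled, and the paper resolves this by deferring to \cite{ArBrHaWr2018}.
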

\begin{proof}
As the definition of a $P$-elementary cone does just depend on the special structure of
the matrix $P$, these statements can be deduced from the proof of 
\cite[Prop. 3.8 (iii),(iv)]{ArBrHaWr2018}.
\end{proof}

\begin{lemma}\label{lem:raysBigCones}
Let $\sigma \in \Sigma$ be a big cone, $\tau \in \trop(X)$ and let $\varrho \in (\sigma \cap \tau)^{(1)}$ be any ray.
Then one of the following statements hold:
\begin{enumerate}
    \item We have $\varrho \in \sigma^{(1)}$.
    \item We have $\varrho = \varrho_{\sigma_1}$, where $\sigma_1 \preceq \sigma$ is a $P$-elementary face.
    In particular, $\varrho \subseteq \lambda_{\mathrm{lin}}$ holds.
\end{enumerate}
\end{lemma}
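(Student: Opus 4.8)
The plan is to analyze the ray $\varrho$ of the intersection cone $\sigma \cap \tau$ by writing $\varrho = \tau_1 \cap \tau_2$ with $\tau_1 \preceq \sigma$ and $\tau_2 \preceq \tau$, which is possible by Remark~\ref{rem:refinement}. Since $\tau \in \trop(X)$ is a cone of the quasifan structure fixed in Remark~\ref{rem:tropXGenArr}, namely a face of $\Sigma_{\PP_r}^{\le c} \times \QQ^s$, the face $\tau_2$ is again of this form, so $\tau_2 \subseteq \lambda_I$ for some index set $I \subseteq \{0,\ldots,r\}$ with $|I| \le c$; in particular $\tau_2$ lies inside a leaf of $\trop(X)$ or, when $I = \emptyset$, inside the lineality space $\lambda_{\mathrm{lin}}$.

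First I would treat the case where $\tau_1$ is a leaf cone of $\Sigma$, i.e.\ $\tau_1 \subseteq \lambda_J$ for some leaf $\lambda_J$. Then $\varrho = \tau_1 \cap \tau_2$ is spanned by a single primitive generator lying in a cone generated by rays of $\Sigma$, and since $\tau_1 \preceq \sigma$ is a face generated by rays among $\{\varrho_{0j_0},\ldots\}$ one argues that the only ray of such an intersection is already a ray of $\Sigma$, giving alternative (i). The more interesting case is when $\tau_1 \preceq \sigma$ is itself a big face of $\sigma$; here $\sigma$ being big forces $\tau_1$ to meet every $1$-leaf $\lambda_i^\circ$, and one wants to show that the extra ray produced by intersecting with $\tau_2 \subseteq \lambda_I$ is of the form $\varrho_{\sigma_1}$ for a $P$-elementary face $\sigma_1 \preceq \sigma$. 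The key input is Lemma~\ref{lem:cite}: for a $P$-elementary cone $\sigma_1 \subseteq \sigma$ one has $\varrho_{\sigma_1} = \sigma_1 \cap \lambda_{\mathrm{lin}}$, and $v_{\sigma_1} \in \sigma_1^\circ$. So the strategy is to identify, from the combinatorics of which rays of $\sigma$ span $\tau_1$ and which coordinate directions $e_i$ ($i\in I$) span $\tau_2$, a suitable $P$-elementary face $\sigma_1$ of $\sigma$ whose associated ray $\varrho_{\sigma_1}$ coincides with $\varrho$; typically $\sigma_1$ is the smallest face of $\sigma$ that is big, i.e.\ the one still hitting all $1$-leaves, and $\varrho$ is cut out as $\sigma_1 \cap \lambda_{\mathrm{lin}}$ because $\tau_2 \subseteq \lambda_I$ and $\varrho$ must lie in the lineality directions not killed by the big condition.

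The main obstacle I expect is the bookkeeping in the mixed case: showing that when $\tau_1$ is big but $\tau_2$ is a proper leaf (not the full lineality space), the intersection $\tau_1 \cap \tau_2$ cannot produce a genuinely new ray outside $\Sigma^{(1)} \cup \{\varrho_{\sigma_1}\}$. This requires understanding how a big face of $\sigma$ sits relative to the leaves $\lambda_I$ — essentially that intersecting a big cone with a leaf either lands in the lineality space (producing some $\varrho_{\sigma_1}$) or collapses onto a face generated by rays of $\Sigma$. I would handle this by exploiting that any cone of $\Sigma$ is either big or a leaf cone (the dichotomy from \cite[Prop.~5.5]{HaHiWr2019}) and applying it to the faces of $\sigma$: a face of $\sigma$ contained in $\tau_2 \subseteq \lambda_I$ is automatically a leaf cone, so one reduces to the leaf-cone case already handled, while the genuinely new rays arise precisely when $\varrho$ is forced into $\lambda_{\mathrm{lin}}$, where Lemma~\ref{lem:cite}(i) pins it down as $\varrho_{\sigma_1}$ and hence $\varrho \subseteq \lambda_{\mathrm{lin}}$, as claimed in (ii).
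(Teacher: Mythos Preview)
Your plan follows essentially the same route as the paper: decompose $\varrho = \tau_1 \cap \tau_2$ via Remark~\ref{rem:refinement}, then use the big/leaf dichotomy for faces of $\sigma$ together with Lemma~\ref{lem:cite}. The paper organizes the case distinction by whether $\tau_2 = \lambda_{\mathrm{lin}}$ rather than by whether $\tau_1$ is big or a leaf cone, but these are two cuts through the same argument.

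Two points in your plan need tightening. First, you should take $\tau_1 \preceq \sigma$ and $\tau_2 \preceq \tau$ \emph{minimal} with $\tau_1 \cap \tau_2 = \varrho$. Minimality is what forces, in the big case, that $\tau_1^\circ$ actually meets $\lambda_{\mathrm{lin}}$ (rather than $\varrho$ sitting on a proper face of $\tau_1$), and in the leaf case it is what lets you replace $\tau_2$ by a face of the leaf $\lambda_I$ containing $\tau_1$, so that $\varrho = \tau_1 \cap \tau_2$ becomes a ray of $\tau_1$ itself and hence of $\sigma$. Without minimality your sentence ``one argues that the only ray of such an intersection is already a ray of $\Sigma$'' does not go through.

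Second, in the big case you locate a $P$-elementary subcone $\sigma_1 \subseteq \tau_1$ with $\varrho_{\sigma_1} = \sigma_1 \cap \lambda_{\mathrm{lin}} \subseteq \tau_1 \cap \lambda_{\mathrm{lin}} = \varrho$, hence $\varrho = \varrho_{\sigma_1}$. But the lemma asks for $\sigma_1$ to be a \emph{face} of $\sigma$, not merely a subcone. This is exactly where Lemma~\ref{lem:cite}(ii) enters: since the argument shows $\varrho_{\sigma_1} = \varrho$ for \emph{every} $P$-elementary subcone $\sigma_1 \subseteq \tau_1$, part~(ii) forces $\tau_1$ itself to be $P$-elementary, and $\tau_1 \preceq \sigma$ is a face. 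Your plan only cites part~(i); you need~(ii) to close this gap.
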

\begin{proof}
Due to Remark \ref{rem:refinement} there exists $\sigma_{\varrho} \preceq \sigma$
and $\tau_{\varrho} \preceq \tau$ such that $\sigma_{\varrho} \cap \tau_{\varrho} = \varrho$ holds and we may assume these cones to be minimal with this property.
We distinguish between the following two cases.

\vspace{2pt}
\noindent
\emph{Case 1:} We have $\tau_{\varrho} = \lambda_{\mathrm{lin}}$, i.e.\ $\varrho = \sigma_{\varrho} \cap \lambda_{\mathrm{lin}}$.
If we have $\sigma_{\varrho}\subseteq \lambda_{\mathrm{lin}}$, then $\varrho = \sigma_{\varrho} \in \sigma^{(1)}$ holds. 
So, assume not. Then with $\sigma_{\varrho}^\circ \cap \lambda_{\mathrm{lin}} \neq \emptyset$, we conclude that $\sigma_{\varrho}$ is big and there exists a $P$-elementary cone $\sigma_1 \subseteq \sigma_{\varrho}$. We obtain
$$\varrho_{\sigma_1} = \sigma_1 \cap \lambda_{\mathrm{lin}} \subseteq \sigma_{\varrho} \cap \lambda_{\mathrm{lin}} = \varrho
$$
and therefore $\varrho = \varrho_{\sigma_1}$. As this does not depend on the choice of the $P$-elementary cone $\sigma_1$ we conclude that $\sigma_{\varrho}$ is $P$-elementary due to Lemma~\ref{lem:cite}~(ii).

\vspace{2pt}
\noindent
\emph{Case 2:} We have $\varrho = \sigma_{\varrho} \cap \tau_{\varrho}$ with $\varrho \subseteq \tau_{\varrho}^{\circ}$ and
$\tau_{\varrho} \neq \lambda_{\mathrm{lin}}$. 
Assume $\sigma_{\varrho} \subseteq \lambda_{\varrho}$ holds. Then
$\varrho = \sigma_{\varrho} \in \sigma^{(1)}$ holds.
So assume $\sigma_{\varrho} \not \subseteq \lambda_{\varrho}$.
If $\sigma_{\varrho}$ is a leave cone, i.e.\ $\sigma \subseteq \lambda_I \in \trop(X)$ holds, then due to minimality of $\tau_\varrho$ we have $\tau \preceq \lambda_I$. We conclude $\varrho \in (\sigma_{\varrho} \cap \lambda_{I})^{(1)} = \sigma^{(1)}$. So assume $\sigma_{\varrho}$ is a big cone. 
In this case there exists a $P$-elementary cone $\sigma_1 \subseteq \sigma_{\varrho}$ with
$$
\varrho_{\sigma_1} = \sigma_1 \cap \lambda_{\mathrm{lin}} \subseteq \sigma_{\varrho} \cap \tau_{\varrho} = \varrho
$$
and therefore $\varrho = \varrho_{\sigma_1}$. As this does not depend on the choice of the $P$-elementary cone $\sigma_1$ we conclude that $\sigma_{\varrho}$ is $P$-elementary due to Lemma~\ref{lem:cite}~(ii).
\end{proof}

\begin{proof}[Proof of Proposition \ref{prop:raysWeaklyTrop}]
We show "$\subseteq$".
Let $\varrho$ be any ray of $\Sigma \sqcap \trop(X)$.
Then due to Remark \ref{rem:refinement}
we have
$\varrho = \sigma \cap \tau$ with minimal cones
$\sigma \in \Sigma$ and $\tau \in \trop(X)$.
Assume $\sigma$ is a leaf cone, i.e.\ 
$\sigma \subseteq \lambda_I \in \trop(X)$ holds. Then due to minimality of $\tau$ we have
$\tau \preceq \lambda_I$. We conclude
$\varrho \in (\sigma \cap \lambda_I)^{(1)} = \sigma^{(1)}$. 
If $\sigma$ is a big cone, Lemma \ref{lem:raysBigCones} gives the assertion.

We prove "$\supseteq$". Due to construction, the rays of $\Sigma$ are supported on the tropical variety. Thus it is only left to show that $\varrho_{\sigma}$ is a ray of
$\Sigma \sqcap \trop(X)$ for a $P$-elementary cone $\sigma \in \Sigma$.
This follows using Lemma \ref{lem:cite} (ii).
\end{proof}

As a consequence of Proposition \ref{prop:raysWeaklyTrop} we obtain the following corollary:

\begin{corollary}\label{cor:weakTropGenArrStays}
The weakly tropical resolution of an explicit  general arrangement variety is again an explicit general arrangement variety.
\end{corollary}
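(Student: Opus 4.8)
The plan is to produce explicit defining data $(A',P',\Sigma')$ for the proper transform $X'\subseteq Z'$ of $X:=X(A,P,\Sigma)\subseteq Z$ under the weakly tropical resolution $Z'\to Z$, and to check that $X'=X(A',P',\Sigma')$. Here $\Sigma':=\Sigma\sqcap\trop(X)$ is already the defining fan of $Z'$, and by Proposition~\ref{prop:raysWeaklyTrop} its rays are the columns of $P$ together with the rays $\varrho_\sigma$, where $\sigma$ runs over the $P$-elementary cones of $\Sigma$. A $P$-elementary cone contains a generator $v_{ij_i}$ for every $i=0,\ldots,r$, and since $c<r+1$ such a cone cannot be contained in any leaf $\lambda_I$ with $|I|\le c$; hence it is big, so Lemma~\ref{lem:cite}~(i) applies and yields $v_\sigma\in\sigma^\circ$ and $\varrho_\sigma=\sigma\cap\lambda_{\lin}$. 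In particular the primitive generator of $\varrho_\sigma$ lies in the lineality space $\lambda_{\lin}=\{0\}^r\times\QQ^s$ of $\trop(X)$, i.e.\ its first $r$ coordinates vanish. I would then set $A':=A$ and let $P'$ be obtained from $P$ by keeping the tuples $l_i$ and the top block unchanged (padded by zero columns) and appending to the matrix $D$ the primitive generators of those $\varrho_\sigma$ that are not already rays of $\Sigma$; thus $P'$ still has the block shape of Construction~\ref{constr:R(A,P)}, with the same data $l_i$ and an enlarged matrix $D'$.

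Next I would verify that $(A',P')$ satisfies all the hypotheses of Construction~\ref{constr:R(A,P)}: the condition on $A'=A$ is unchanged; the columns of $P'$ are primitive by construction and still span $\QQ^{r+s}$ since the columns of $P$ already do; and they are pairwise distinct, because distinct rays have distinct primitive generators and, by Lemma~\ref{lem:cite}~(ii), distinct $P$-elementary cones inside a common big cone give distinct rays $\varrho_\sigma$. Hence $X(A',P',\Sigma')$ is a well-defined explicit general arrangement variety, and its ambient toric variety $Z_{\Sigma'}$ is exactly $Z'$. It then remains to identify $X(A',P',\Sigma')$ with $X'$. Both are closed subvarieties of $Z'$ obtained as the closure of their intersection with the big torus $T=\TT^{r+s}$, so it suffices to check that these torus parts agree. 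Since $Z'\to Z$ is an isomorphism over $T$, the proper transform satisfies $X'\cap T=X\cap T$. On the other hand, the relations defining $R(A',P')$ only involve the variables $T_{ij}$ and are literally the relations defining $R(A,P)$, so $\Spec R(A',P')\cong\Spec R(A,P)\times\CC^{\,m'-m}$, and the toric map attached to $P'$ sends the extra torus factor into the subtorus of $T$ spanned by the appended columns of $D'$, which lies inside the $\TT^s$ acting on $X$. Therefore $p'\bigl(\Spec R(A',P')\cap\TT^{n+m'}\bigr)$ is the product of $p\bigl(\Spec R(A,P)\cap\TT^{n+m}\bigr)$ with that subtorus; using that $X$, hence $X\cap T$, is $\TT^s$-invariant (Construction~\ref{constr:XAPSigma}) and passing to closures, this product has the same closure as $X\cap T$. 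Thus $X(A',P',\Sigma')\cap T=X\cap T=X'\cap T$, and taking closures in $Z'$ gives $X'=X(A',P',\Sigma')$.

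The step I expect to require the most care is establishing that the new ray generators have vanishing first $r$ coordinates, so that $P'$ really retains the prescribed block form; this I would reduce to Lemma~\ref{lem:cite}~(i) after the observation that every $P$-elementary cone of $\Sigma$ is big. The other delicate point is the torus identification: conceptually, passing from $P$ to $P'$ only enlarges the grading group and adjoins free Cox variables, and the corresponding new exceptional directions are absorbed by the $\TT^s$-invariance of $X$, so the proper transform does not ``see'' them inside $T$. The remaining verifications — that $P'$ meets the primitivity, distinctness and spanning conditions of Construction~\ref{constr:R(A,P)}, and that $\Sigma'$ has precisely the columns of $P'$ as its rays — are routine given Proposition~\ref{prop:raysWeaklyTrop} and Lemma~\ref{lem:cite}.
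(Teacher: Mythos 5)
Your proposal is correct, and it rests on the same combinatorial input as the paper's proof: Proposition~\ref{prop:raysWeaklyTrop} together with the observation (via Lemma~\ref{lem:cite}) that the new rays $\varrho_\sigma$ lie in the lineality space $\lambda_{\lin}=\{0\}^r\times\QQ^s$, so that they can be appended to the block $D$ as columns of ``$S_k$-type'' without disturbing the shape of $P$ or the relations $g_t$. Where you diverge is in the endgame. The paper concludes from the lineality-space observation that the fans $\Delta_0$ and $\Delta_0'$ of Construction~\ref{constr:MOQ} coincide, hence that $X$ and $X'$ have the same explicit maximal orbit quotient up to small birational modification, and then invokes the general correspondence of \cite[Sec.~6]{HaHiWr2019} to identify $X'$ with $X(A,P',\Sigma')$. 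You instead verify the identification by hand: since $Z'\to Z$ is an isomorphism over the acting torus $T$, both $X'$ and $X(A',P',\Sigma')$ are closures of subsets of $T$, and the extra torus factor coming from the adjoined free variables is absorbed by the $\TT^s$-invariance of $X\cap T$ (note that $p(\bar X\cap\TT^{n+m})$ is in fact closed in $T$ and equal to $X\cap T$, so your ``same closure'' step is even an equality). Your route is more self-contained, at the cost of some bookkeeping that the paper outsources to the cited machinery; the one point to phrase carefully is the pairwise distinctness of the appended columns, which is automatic once you index them by the \emph{set} of new rays of $\Sigma'$ rather than by the $P$-elementary cones themselves.
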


\begin{proof}
Let $X:=X(A,P,\Sigma)\subseteq Z$ be an explicit general arrangement variety and consider its weakly tropical resolution $Z'\rightarrow Z$. Due to Proposition \ref{prop:raysWeaklyTrop} the rays of $\Sigma'=\Sigma\sqcap \trop(X)$ which are not rays of $\Sigma$ are contained in the lineality space of $\trop(X)$. In particular, the fans $\Delta_0$ and $\Delta_0'$ as in Construction \ref{constr:MOQ} coincide and therefore the explicit maximal orbit quotients $X\dashrightarrow Y_0$ and $X'\dashrightarrow Y_0'$ coincide up to small birational modifications. 
We conclude that $X'=X(A,P',\Sigma')$ holds, where $A$ is the same matrix as for $X$ and $P'$ contains the primitive ray generators of the fan $\Sigma'$, see \cite[Sec. 6]{HaHiWr2019}.
\end{proof}

Due to \cite[Thm. 6.5]{HaHiWr2019} the Cox ring $R(A,P)$ of an explicit general arrangement variety $X:=X(A,P,\Sigma)\subseteq Z$ is a complete intersection ring.
Therefore, we can apply \cite[Prop. 3.3.3.2]{ArDeHaLa2015} and obtain the canonical class of $X$ 
via the following formula:
$$\KKK_X = - \sum_{\varrho \in \Sigma^{(1)}}\deg(T_\varrho)
+ \sum_{i= 1}^{r-c} \deg(g_i) 
\in\Cl(X)\cong\ZZ^{n+m} / \im(P^*).$$

\begin{proposition}
\label{prop:disc}
Let $X:=X(A,P, \Sigma)\subseteq Z$ be a $\QQ$-Gorenstein explicit
general arrangement variety
with weakly tropical resolution $Z'\rightarrow Z$ and let $\sigma \in \Sigma$ be a $P$-elementary cone. Then the following statements hold:
\begin{enumerate}
\item
The discrepancy along the prime divisor of $X'\subseteq Z'$ 
corresponding to $\varrho_\sigma$ equals 
$c_{\sigma}^{-1}\ell_{\sigma}-1$.
\item
The ray $\varrho_\sigma$ is not contained in
the anticanonical complex $\mathcal{A}$, 
if and only if $\ell_\sigma > 0$ holds; 
in this case, $\varrho_\sigma$ leaves $\mathcal{A}$ at
$v_\sigma' = \ell_{\sigma}^{-1} v_\sigma$.
\end{enumerate}
\end{proposition}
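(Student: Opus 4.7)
The plan is to read off both statements from the defining-linear-form description of the anticanonical complex in Proposition~\ref{prop:AOfGArr} combined with the discrepancy formula of Proposition~\ref{prop:applyACC}. First I would fix a cone $\sigma' \in \Sigma' = \Sigma \sqcap \trop(X)$ containing the ray $\varrho_\sigma$; by Proposition~\ref{prop:raysWeaklyTrop} such a $\sigma'$ exists, and by Proposition~\ref{prop:AOfGArr} the defining linear form $u_{\sigma'}$ of $A_{\sigma'}$ satisfies $\langle u_{\sigma'}, v_\sigma\rangle = -\ell_\sigma$. Since the primitive generator of $\varrho_\sigma$ is $v_{\varrho_\sigma} = c_\sigma^{-1} v_\sigma$ by the very definition of $c_\sigma$ as the gcd of the coordinates of $v_\sigma$, we immediately obtain
\[
\langle u_{\sigma'}, v_{\varrho_\sigma}\rangle \;=\; -\,c_\sigma^{-1}\ell_\sigma.
\]

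For (i), Proposition~\ref{prop:applyACC} (or rather the equivalent statement in Remark~\ref{rem:Aindepoffam}) gives the discrepancy along the prime divisor of $X'$ corresponding to $\varrho_\sigma$ as
\[
a_{\varrho_\sigma} \;=\; -1 - \langle u_{\sigma'}, v_{\varrho_\sigma}\rangle \;=\; c_\sigma^{-1}\ell_\sigma - 1,
\]
which is the asserted formula. Here one should also note that this number is independent of the choice of $\sigma'$ containing $\varrho_\sigma$, which follows from Remark~\ref{rem:Aindepoffam}.

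For (ii), by construction $\varrho_\sigma \subseteq \mathcal{A}$ holds precisely when $\langle u_{\sigma'}, v\rangle \ge -1$ for all $v \in \varrho_\sigma$, equivalently when $\langle u_{\sigma'}, v_\sigma\rangle \ge 0$, that is $\ell_\sigma \le 0$. Thus $\varrho_\sigma \not\subseteq \mathcal{A}$ if and only if $\ell_\sigma > 0$. In the latter case the exit point of $\varrho_\sigma$ from $\mathcal{A}$ is the unique $v = t\, v_\sigma \in \varrho_\sigma$ with $\langle u_{\sigma'}, v\rangle = -1$, and $\langle u_{\sigma'}, v_\sigma\rangle = -\ell_\sigma$ forces $t = \ell_\sigma^{-1}$, so $v = \ell_\sigma^{-1} v_\sigma = v_\sigma'$ as claimed.

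The only substantive issue is to make sure that a cone $\sigma' \in \Sigma'$ with $\varrho_\sigma \preceq \sigma'$ actually exists so that Proposition~\ref{prop:AOfGArr} applies; this is guaranteed by Proposition~\ref{prop:raysWeaklyTrop}, which identifies $\varrho_\sigma$ as a ray of $\Sigma'$. Everything else is a short calculation, and no real obstacle beyond invoking the already-established description of $u_{\sigma'}$ is expected.
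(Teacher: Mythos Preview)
Your argument is circular. You invoke Proposition~\ref{prop:AOfGArr} to obtain $\langle u_{\sigma'}, v_\sigma\rangle = -\ell_\sigma$, and then deduce both parts of Proposition~\ref{prop:disc} from this. But in the paper's logical order, Proposition~\ref{prop:AOfGArr} is only \emph{stated} earlier; its \emph{proof} comes after Proposition~\ref{prop:disc} and in fact relies on it. Concretely, the paper's proof of Proposition~\ref{prop:AOfGArr} says that for rays of the form $\varrho_\sigma$ ``the assertion follows from Proposition~\ref{prop:disc}~(ii).'' So the equality $\langle u_{\sigma'}, v_\sigma\rangle = -\ell_\sigma$ that you want to quote is precisely what Proposition~\ref{prop:disc} is supposed to establish.

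The paper's own proof avoids this by working directly with a toric canonical $\varphi$-family. It writes down an explicit $T'$-invariant divisor
\[
D_{\varrho_\sigma} \;=\; \sum_{j=1}^{n_0} (r-c)\,l_{0j}\,D_{\varrho_{0j}} \;-\; \sum_{\varrho' \in (\Sigma')^{(1)}} D_{\varrho'}
\]
on $Z'$, uses Corollary~\ref{cor:weakTropGenArrStays} (that $X'$ is again an explicit general arrangement variety) together with the canonical class formula for such varieties to see that $D_{\varrho_\sigma}|_{X'}$ is canonical, checks the remaining conditions of Definition~\ref{def:toricCanonPhiFamily}, and then applies Remark~\ref{rem:Aindepoffam} to compute $\langle u, v_\sigma\rangle = \sum_i \ell_{\sigma,i}\langle u, v_{ij_i}\rangle = -\ell_\sigma$ by hand. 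That computation is the actual content; once it is done, your deductions for (i) and (ii) are fine, but you cannot skip it by citing Proposition~\ref{prop:AOfGArr}.
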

\begin{proof}
We prove (i).
Due to Theorem \ref{thm:weakTropicalLocallyToric}
the variety $X \subseteq Z$ admits a semi-locally toric weakly tropical resolution. Applying Lemma \ref{lem:LocallytToricCanPsiFamily} we conclude that there exists a toric canonical $\varphi$-family. Therefore, explicitly constructing a pair 
$(Z'_{\varrho_{\sigma}}, D_{\varrho_\sigma})$ as in Definition
\ref{def:toricCanonPhiFamily} we can 
use Remark \ref{rem:Aindepoffam}
to calculate the discrepancy along 
$D_{X'}^{\varrho_\sigma}$:

Consider the ray $\varrho_{\sigma} \in \Sigma'$. Then 
$\varrho_\sigma \subseteq \lambda_I$ holds
for every maximal leaf of 
$\trop(X)$. In particular, we may choose 
$I:=\left\{ 1, \ldots, c\right\} $ and consider the divisor
$$D_{\varrho_\sigma} := \sum_{j=1}^{n_0}
(r-c)l_{0j} D_{\varrho_{0j}} - \sum_{\varrho' \in (\Sigma')^{(1)}}D_{\varrho'}.$$
Then, as $X'\subseteq Z'$ is an explicit general arrangement variety due to Corollary \ref{cor:weakTropGenArrStays}, the pullback $D_{\varrho_\sigma}|_{X'}$ is a canonical divisor on $X'$. Moreover, the push forward $\varphi_*(D_{\varrho_{\sigma}})$ is $\QQ$-Cartier and by construction we have $D_{\varrho_\sigma} = k_{Z'}$ on $Z'_{\varrho_{\sigma}}$.
In particular, we have constructed a tuple $(Z'_{\varrho_{\sigma}}, D_{\varrho_\sigma})$ as claimed.
Now, let $u \in \QQ^{r+s}$ be an element such that $\mathrm{div}(\chi^{u}) = \varphi_*(D_{\varrho_\sigma})$ holds on $Z_{\sigma}$. Then due to Remark \ref{rem:Aindepoffam} we have
$$
\mathrm{discr}_X(D_{X'}^{\varrho_\sigma}) = -1 - \bangle{u, v_{\varrho_\sigma}}. 
$$
Therefore, using $v_{\sigma} = v_{\varrho_\sigma} \cdot c_{\sigma}$, we obtain the assertion with
$$
\bangle{u, v_{\sigma}} 
=
\bangle{u, \sum_{i=0}^r \ell_{\sigma, i}v_{ij_i}}
= 
\sum_{i=0}^r\ell_{\sigma,i}\bangle{u, v_{ij_i}}
=
- \ell_\sigma.
$$
Using (i) assertion (ii) 
follows from the definition of the anticanonical complex.
\end{proof}

\begin{proof}[Proof of Proposition \ref{prop:AOfGArr}]
Let $\sigma' \in \Sigma'$ be any cone. Then
a linear form $u\in M_\QQ$
is defining for $A_{\sigma'}$ if and only if
for all rays $\varrho \in \sigma'$ we have
$$\mathrm{disc}_X(D_{X'}^\varrho) = -1 - \bangle{u, v_{\varrho}}.$$
Due to Proposition \ref{prop:raysWeaklyTrop}
the rays of $\sigma'$ are either rays of $\Sigma$, then
$\bangle{u, v_{\varrho}} = - 1$ holds, or they are of the form $\varrho_{\sigma}$ where $\sigma \in \Sigma$
is a $P$-elementary cone.
In this case
the assertion follows from
Proposition \ref{prop:disc} (ii).
\end{proof}

\begin{proof}[Proof of Corollary \ref{cor:AOfGArr}]
By definition the vertices of $\mathcal{A}$ are the vertices of $A_{\sigma'}$ where $\sigma'$ runs over all cones of $\Sigma'$.
In particular, they arise as the intersection of the hyperplane $u_{\sigma'} = -1$ with the rays of $\sigma'$.
Therefore, Proposition \ref{prop:AOfGArr}
gives the assertion. The supplement 
follows using the
characterization of log terminality as given in Remark \ref{rem:charAKK} (i').
\end{proof}

\begin{proof}[Proof of Theorem \ref{thm:3}]
The cone $\sigma\in\Sigma_X$ is by definition $P$-elementary and big.
Thus, the assertion follows via direct calculation from Proposition \ref{prop:disc}.
\end{proof}

\begin{remark}\label{rem:expLogTerm}
Consider a $P$-elementary cone 
$\sigma= \varrho_0+ \dots +\varrho_r\in \Sigma$
defining a log terminal singuarity and 
assume $l_{\varrho_0}\geq\dots \geq l_{\varrho_r}$ holds.
Then the condition in Theorem \ref{thm:3} (i) implies that
$\sum_{i=0}^{c+1} l_{\varrho_{i}}^{-1} > 1$ and $l_{\varrho_{c+2}} = \cdots =l_{\varrho_r}= 1$ holds.
\end{remark}

\begin{proof}[Proof of Corollary \ref{introcor2}]
Using Remark \ref{rem:expLogTerm}
the claim follows 
from Theorem \ref{thm:3}
via a direct calculation 
in the complexity two case.
\end{proof}

\section{An alternative construction}
In this section we consider explicit general arrangement varieties
$X:=X(A,P,\Sigma)\subseteq Z$ with ample anticanonical divisor
and give an alternative description of the anticanonical complex in this setting.
Following the same steps as done in \cite{BeHaHuNi2016}, we explicitly construct a polyhedral complex 
and show in Theorem \ref{thm:compACCs} that it is indeed the anticanonical complex of $X$.
In particular, we make the construction developed in \cite{BeHaHuNi2016} applicable in a broader setting: Besides leaving the Fano case by dropping the condition on $X$ to be projective, Example \ref{ex:RAP4} shows that in general the varieties $X(A,P,\Sigma) \subseteq Z$ are not treatable with the methods developed there.

In this section let $X:=X(A,P,\Sigma)\subseteq Z$ be an explicit general arrangement variety with Cox ring $R(A,P)$ given by generators $T_{ij},S_k$ and relations $g_1,\ldots,g_{r-c}$ as in Construction \ref{constr:R(A,P)}. Moreover let $Z'\rightarrow Z$ be its weakly tropical resolution defined by the fan $\Sigma'=\Sigma\sqcap\trop(X)$ in $\QQ^{r+s}$.

\begin{construction}
\label{constr:AnticanPQ}
Let $\gamma_{n+m}\subseteq\QQ^{n+m}$ be the positive
orthant and let $e_\Sigma\in\ZZ^{n+m}$ be 
any representative of the canonical class $\KKK_Z$ of $Z$.
Define polytopes
$$B(-\KKK_X):=Q^{-1}(-\KKK_X)\cap \gamma_{n+m}\subseteq\QQ^{n+m}
\quad
$$
and
$B:=B(g_1)+\ldots+B(g_{r-c})$
as the Minkowski sum of the Newton polytopes $B(g_i)$ of
the relations $g_i$.
The \emph{anticanonical polyhedron} of $X$ 
is the dual polyhedron $A_X \subseteq \QQ^{r+s}$ 
of the polyhedron
$$
B_X
\ := \ 
(P^*)^{-1}(B(-\mathcal{K}_X) + B - e_{\Sigma}) 
\ \subseteq \ 
\QQ^{r+s}.
$$

\end{construction}

\begin{theorem}
\label{thm:compACCs}
Let $X:=X(A,P,\Sigma)\subseteq Z$ be a $\QQ$-Gorenstein explicit general arrangement variety with ample anticanonical class. Then the anticanonical complex $\mathcal{A}$ of $X$
is the polyhedral complex
$$ 
{\rm faces}(A_X) \sqcap \Sigma \sqcap \trop(X) 
= 
{\rm faces}(A_X) \sqcap \Sigma'.
$$
\end{theorem}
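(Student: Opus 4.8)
The plan is to compare the two polyhedral complexes $\mathcal{A}$ and $\faces(A_X)\sqcap\Sigma'$ cone by cone, using the explicit description of $\mathcal{A}$ from Proposition~\ref{prop:AOfGArr} together with Corollary~\ref{cor:AOfGArr}. Since both complexes are supported on $|\Sigma'|=|\Sigma\sqcap\trop(X)|$ and their cell structure refines $\Sigma'$, it suffices to fix a cone $\sigma'\in\Sigma'$ and show that
$$
A_{\sigma'}
\ = \
\mathcal{A}\cap\sigma'
\ = \
(\faces(A_X)\sqcap\sigma')\cap\sigma'
\ = \
A_X\cap\sigma'.
$$
So the real content is the pointwise identity $A_{\sigma'}=A_X\cap\sigma'$ for every $\sigma'\in\Sigma'$, which then glues to the claimed equality of complexes.

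First I would unravel the polyhedron $A_X$ via Cox-ring duality. By Construction~\ref{constr:AnticanPQ}, $A_X$ is the dual of $B_X=(P^*)^{-1}(B(-\KKK_X)+B-e_\Sigma)$. Using the canonical-class formula $\KKK_X=-\sum_{\varrho\in\Sigma^{(1)}}\deg(T_\varrho)+\sum_i\deg(g_i)$ recalled before Proposition~\ref{prop:disc}, and the fact that $B(g_t)$ has exactly the $c+2$ vertices coming from the monomials $T_0^{l_0},\ldots,T_c^{l_c},T_{c+t}^{l_{c+t}}$, one computes that a point $v\in\QQ^{r+s}$ lies in $A_X$ iff $\langle u,v\rangle\ge -1$ for all vertices $u$ of $B_X$, and these vertices, pulled back along $P^*$, are precisely the lattice points of $\gamma_{n+m}$ sitting in a fixed $Q$-fibre shifted by the Minkowski-sum data. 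The key computation is to identify, for a fixed maximal (hence big and $P$-elementary, after refinement) cone $\sigma'\in\Sigma'$, the linear form cutting out $A_X\cap\sigma'$: I expect it to be exactly a defining linear form $u_{\sigma'}$ in the sense of Proposition~\ref{prop:AOfGArr}, i.e.\ $\langle u_{\sigma'},v_\varrho\rangle=-1$ on rays $\varrho\in(\sigma')^{(1)}\cap\Sigma^{(1)}$ and $\langle u_{\sigma'},v_\sigma\rangle=-\ell_\sigma$ on the extra rays $\varrho_\sigma$. The ampleness hypothesis on $-\KKK_X$ is what guarantees that $B(-\KKK_X)$ is a genuine (full-dimensional, with the right normal fan) polytope so that $A_X$ meets each maximal cone of $\Sigma$ in the single linear inequality $u_\sigma\ge -1$, exactly as in the toric picture recalled in the remark after Definition~\ref{def:antiCanRegion}; without ampleness $A_X$ need not be a single polyhedron on a maximal cone.

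The technical heart, and the step I expect to be the main obstacle, is the bookkeeping that matches the vertices of $B_X$ with the numbers $\ell_{\sigma,i}$ and $\ell_\sigma$ of Construction~\ref{constr:vtauprime}. Concretely: take $\sigma'\in\Sigma'$, choose $\sigma\in\Sigma$ with $\sigma'\subseteq\sigma$ and a representative of $-\KKK_X$ whose image in $\QQ^{r+s}$ is the linear form $u_{\sigma'}$; then check that $\langle u_{\sigma'},v_\varrho\rangle$ takes the values prescribed by Proposition~\ref{prop:AOfGArr} on all rays of $\sigma'$. For the rays in $\Sigma^{(1)}$ this is the statement that $-\KKK_X$ restricted to $Z_\sigma$ is $\div(\chi^{u_\sigma})$ with $\langle u_\sigma,v_\varrho\rangle=-1$, which is immediate from the toric canonical divisor being minus the sum of the toric primes. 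For the new rays $\varrho_\sigma$ (with $\sigma$ $P$-elementary, $\varrho_\sigma\preceq\sigma'$), one uses $v_\sigma=\sum_i\ell_{\sigma,i}v_{ij_i}$ together with linearity and the relation-degree contribution $\sum_i\deg(g_i)$ to the canonical class, to get $\langle u_{\sigma'},v_\sigma\rangle=-\ell_\sigma$; this is essentially the computation already carried out in the proof of Proposition~\ref{prop:disc}, and I would cite it. Once these values are verified, Proposition~\ref{prop:AOfGArr} identifies $u_{\sigma'}$ as a defining linear form for $A_{\sigma'}$, hence $A_X\cap\sigma'=\{v\in\sigma';\ \langle u_{\sigma'},v\rangle\ge -1\}=A_{\sigma'}$. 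Taking the union over all $\sigma'\in\Sigma'$ and passing to faces yields $\faces(A_X)\sqcap\Sigma'=\mathcal{A}$, and the first equality $\faces(A_X)\sqcap\Sigma\sqcap\trop(X)=\faces(A_X)\sqcap\Sigma'$ is just the definition $\Sigma'=\Sigma\sqcap\trop(X)$ together with associativity of $\sqcap$.
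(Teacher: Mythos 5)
Your overall strategy -- reduce to the cone-by-cone identity $A_X\cap\sigma'=A_{\sigma'}$ for $\sigma'\in\Sigma'$ and identify the vertex of $B_X$ dual to the normal cone containing $\sigma'$ with a defining linear form in the sense of Proposition~\ref{prop:AOfGArr} -- is essentially the route the paper takes, except that the paper packages the verification as the construction of an explicit toric canonical $\varphi$-family $(Z'_{\sigma'},D_{\sigma'})$ (so that Construction~\ref{constr:A} and Remark~\ref{rem:Aindepoffam} do the identification), whereas you invoke Proposition~\ref{prop:AOfGArr} directly. Both are legitimate, and your route is arguably the more natural one given that Proposition~\ref{prop:AOfGArr} already characterizes the defining linear forms.

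There are, however, two places where your sketch papers over the actual content. First, the statement that $A_X$ meets each cone of $\Sigma'$ in a \emph{single} linear inequality requires that $\Sigma\sqcap\trop(X)$ be a subfan of the normal fan of $B_X$ (Proposition~\ref{prop:acancompstruct}). You attribute this solely to ampleness of $-\KKK_X$, but ampleness only controls the summand $B(-\KKK_X)$, i.e.\ it gives that $\Sigma$ refines $\mathcal{N}(B_{-k_X})$. The Minkowski summand $B=B(g_1)+\dots+B(g_{r-c})$ is equally present in $B_X$, and one must separately prove that $\trop(X)$ refines the normal quasifan of $\tilde B$ -- this is Lemma~\ref{lem:tropSubfan}, a genuinely nontrivial verification about the rays $\cone(e_k)$ of $\Sigma_{\PP_r}^{\le c}$, without which the whole cone-by-cone comparison breaks down. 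Second, your claim that the value $-1$ on rays $\varrho\in(\sigma')^{(1)}\cap\Sigma^{(1)}$ is ``immediate from the toric canonical divisor being minus the sum of the toric primes'' ignores that the vertex $u$ of $B_X$ decomposes as $P^*(u)=\mu+\nu-e_\Sigma$ with $\nu$ coming from $B$; one must show $\langle\nu,e_\varrho\rangle=0$ for exactly those rays. This uses the leaf structure: $\sigma'\subseteq\sigma\cap\lambda_I$, and the vertex $\tilde\nu$ of $\tilde B$ dual to the normal cone containing $\lambda_I$ has vanishing coordinates in the positions indexed by $I$, together with the translation $P^*(\tilde B)=B-(r-c)\cdot l_0$. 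Relatedly, for the extra rays $\varrho_\sigma$ the correction term $(c-r)l_{0j_0}\cdots l_{rj_r}$ in $\ell_\sigma$ comes precisely from the nonvanishing of $\langle\nu,e_{ij_i}\rangle$ for $i\notin I$, so ``linearity plus the relation-degree contribution'' needs to be made into an actual computation (or replaced by a citation of Proposition~\ref{prop:disc}, as you suggest). With these two points filled in, your plan goes through.
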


\begin{corollary}\label{cor:fanoConvex}
Let $X:=X(A,P,\Sigma)\subseteq Z$ be a Fano general arrangement variety. Then its anticanonical complex $\mathcal{A}$ is piecewise convex, i.e. 
$$
\mathrm{conv}(|\mathcal{A}|) \cap |\trop(X)|
= 
|\mathcal{A}|.
$$
\end{corollary}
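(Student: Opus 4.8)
The statement to prove is Corollary~\ref{cor:fanoConvex}: for a Fano general arrangement variety $X:=X(A,P,\Sigma)\subseteq Z$, the anticanonical complex satisfies $\conv(|\mathcal{A}|)\cap|\trop(X)| = |\mathcal{A}|$. The inclusion ``$\supseteq$'' is trivial since $|\mathcal{A}|\subseteq|\trop(X)|$ always and $|\mathcal{A}|\subseteq\conv(|\mathcal{A}|)$ by definition of convex hull; so the entire content is the reverse inclusion. The natural approach is to exploit Theorem~\ref{thm:compACCs}, which identifies $\mathcal{A}$ as $\faces(A_X)\sqcap\Sigma'$, where $A_X$ is the \emph{anticanonical polyhedron}, a polyhedron (in particular a convex set) of Construction~\ref{constr:AnticanPQ}. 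Since $X$ is Fano, the anticanonical class is ample, so Theorem~\ref{thm:compACCs} applies.

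The plan is as follows. First I would observe that, because $X$ is Fano, $A_X$ is a genuine polytope containing the origin in its interior (ampleness of $-\mathcal{K}_X$ forces $B_X$ to contain $0$ in its interior, so its dual $A_X$ is bounded); this is the convex body whose boundary records the discrepancies. Next, the key geometric point: $|\mathcal{A}| = |A_X|\cap|\trop(X)|$. Indeed, by Theorem~\ref{thm:compACCs} the cells of $\mathcal{A}$ are the cells of $\faces(A_X)$ intersected with cones of $\Sigma'=\Sigma\sqcap\trop(X)$, and since $\Sigma$ is supported on $|\trop(X)|$ (true for the weakly tropical resolution, and more to the point the fan $\Sigma$ of the ambient toric variety of a complete general arrangement variety has $|\Sigma|\supseteq|\trop(X)|$), taking the union of all these cells recovers $A_X\cap|\trop(X)|$. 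So $|\mathcal{A}|$ is literally the intersection of the convex polytope $A_X$ with the (closed, typically non-convex) set $|\trop(X)|$. Then the corollary becomes the purely set-theoretic statement that for a convex set $K=A_X$ and any set $S=|\trop(X)|$ one has $\conv(K\cap S)\cap S \subseteq K\cap S$: this holds because $\conv(K\cap S)\subseteq\conv(K)=K$ (as $K$ is convex), hence $\conv(K\cap S)\cap S\subseteq K\cap S$. Combined with the trivial reverse inclusion this gives equality, which is exactly the claimed $\conv(|\mathcal{A}|)\cap|\trop(X)| = |\mathcal{A}|$.

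I would then write this up in three short steps: (1) reduce, via Theorem~\ref{thm:compACCs}, to the identity $|\mathcal{A}| = |A_X|\cap|\trop(X)|$, checking that the union of cells $F\cap\sigma'$ over $F\in\faces(A_X)$, $\sigma'\in\Sigma'$ indeed exhausts $A_X\cap|\Sigma'| = A_X\cap|\trop(X)|$ — here one uses that $\faces(A_X)$ is a polyhedral subdivision of $A_X$ and that $\Sigma'$ subdivides $|\trop(X)|$, so their common refinement covers the intersection of supports; (2) invoke convexity of the polyhedron $A_X$ to get $\conv(|\mathcal{A}|) = \conv(A_X\cap|\trop(X)|)\subseteq A_X$; (3) intersect with $|\trop(X)|$ and combine with the obvious inclusion $|\mathcal{A}|\subseteq\conv(|\mathcal{A}|)\cap|\trop(X)|$ to conclude.

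The main obstacle is step (1): making precise that $|\mathcal{A}|$ equals $A_X\cap|\trop(X)|$ rather than merely being contained in it. One must be careful that every point of $A_X$ lying in $|\trop(X)|$ actually lies in some cell $F\cap\sigma'$ — i.e. that no part of $A_X\cap|\trop(X)|$ is ``lost'' in forming the common refinement. This follows because both $\faces(A_X)$ and $\Sigma'$ are honest polyhedral complexes and $|\trop(X)|=|\Sigma'|\subseteq$ (the support relevant to the construction), so their meet $\faces(A_X)\sqcap\Sigma'$ has support exactly $|A_X|\cap|\Sigma'|$; this is a standard fact about common refinements of polyhedral complexes, and once stated the rest is the elementary convexity argument above. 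A secondary point worth a sentence is confirming that the Fano hypothesis — not merely $\QQ$-Gorenstein with ample anticanonical class — is what lets us apply Theorem~\ref{thm:compACCs} and guarantees $A_X$ is a bona fide (bounded) polytope, though boundedness is not actually needed for the convexity argument, only convexity of $A_X$ as a polyhedron.
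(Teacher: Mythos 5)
Your proposal is correct and follows exactly the route the paper intends: the corollary is stated as an immediate consequence of Theorem~\ref{thm:compACCs}, namely that $|\mathcal{A}| = A_X \cap |\trop(X)|$ with $A_X$ convex, after which the set-theoretic convexity argument you give yields the claim. Your additional care in step (1) about the support of the common refinement, and the observation that only convexity (not boundedness) of $A_X$ is needed, are both accurate.
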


\begin{example}\label{ex:RAP5}
Consider the variety $X := X(A,P, \Sigma) \subseteq Z$ from Examples \ref{ex:RAP1}, \ref{ex:RAP2}, \ref{ex:RAP3} and \ref{ex:RAP4}. 
By construction, $X$ is a Fano general arrangement variety and its anticanonical complex is given as
$$
|\mathcal{A}| = \mathrm{conv}(v_{01},v_{02},v_{11},v_{21}, v_{31}, v_{\mathrm{lin}1}, v_{\mathrm{lin}2}) \cap |\trop(X)|.
$$
\end{example}

\begin{lemma}\label{lem:tropSubfan}
Let $X(A,P,\Sigma)\subseteq Z$ be an explicit general arrangement variety with $A=(a_0,\ldots,a_r)$ and consider the linear subspace $\PP_c\subseteq \PP_r$ defined via the kernel of $A$, i.e.\ the vanishing set of the relations
$f_1, \ldots, f_{r-c}$, where 
$$ 
f_t
:=
\det
\left[
\begin{array}{ccccc}
a_0 &a_1& \ldots & a_c & a_{c+t}
\\
U_0 &U_1 &\ldots & U_c & U_{c+t}
\end{array}
\right]
\in 
\CC[U_0, \ldots, U_r].
$$
Then $\trop(\PP_c \cap \TT^r)=\Sigma_{\PP_r}^{\leq c}$ is a subfan of the normal fan of 
$$
\tilde{B}:=B(h_1) + \dots + B(h_{r-c}) \subseteq \QQ^r,
\quad 
\text{ with} 
\quad
h_i  := f_i(1,U_1, \dots, U_r).
$$
In particular, the tropical variety
$\trop(X)$ is a subfan of the normal quasifan of $\tilde{B}$ considered as a 
a polytope in $\QQ^{r+s}$.
\end{lemma}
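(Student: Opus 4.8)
The plan is to make the Newton polytopes $B(h_t)$ and the normal fan of their Minkowski sum $\tilde B$ explicit enough to read off the cones lying over $\Sigma_{\PP_r}^{\le c}$, to check the subfan property one cone at a time, and then to deduce the final assertion formally via Remark \ref{rem:tropXGenArr}.

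First I would compute the $B(h_t)$. Expanding the determinant $f_t$ along its last row gives $f_t=\sum_{j\in\{0\}\cup J_t}b_{t,j}U_j$ with $J_t:=\{1,\ldots,c\}\cup\{c+t\}$, where $b_{t,j}$ is, up to sign, the $(c+1)\times(c+1)$ minor of $A$ obtained by deleting the column $a_j$ from $a_0,\ldots,a_c,a_{c+t}$; since any $c+1$ columns of $A$ are linearly independent, all $b_{t,j}$ are nonzero. Hence $h_t=f_t(1,U_1,\ldots,U_r)$ has Newton polytope the standard simplex $S_t:=\conv(0,e_j;\ j\in J_t)\subseteq\QQ^r$, and $\tilde B=S_1+\cdots+S_{r-c}$. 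Since $\bigcup_t J_t=\{1,\ldots,r\}$, the polytope $\tilde B$ is full-dimensional, so its normal fan $N(\tilde B)=N(S_1)\sqcap\cdots\sqcap N(S_{r-c})$ is a complete pointed fan in $\QQ^r$. For each $t$ I would describe $N(S_t)$ as the preimage, under the coordinate projection $p_t\colon\QQ^r\to\QQ^{J_t}$, $w\mapsto(w_j)_{j\in J_t}$, of the normal fan of $S_t$ regarded as a full-dimensional $(c+1)$-simplex in $\QQ^{J_t}$; the latter is the fan of $\PP^{c+1}$, with rays $\bar e_j$ for $j\in J_t$ together with $-\sum_{j\in J_t}\bar e_j$ (the usual normal-fan-of-a-simplex computation, in the convention compatible with Remark \ref{rem:tropXGenArr}). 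Note that $p_t$ sends the ray generator $e_i$ of $\Sigma_{\PP_r}$ to $\bar e_i$ if $i\in J_t$, to $0$ if $i\in\{1,\ldots,r\}\setminus J_t$, and sends $e_0=-\sum_{k=1}^r e_k$ to $-\sum_{j\in J_t}\bar e_j$.

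Next, by Remark \ref{rem:tropXGenArr} we have $\trop(\PP_c\cap\TT^r)=\Sigma_{\PP_r}^{\le c}$, whose cones are $\sigma_I:=\cone(e_i;\ i\in I)$ for $I\subseteq\{0,\ldots,r\}$ with $|I|\le c$. Fixing such an $I$ and a point $v$ in the relative interior of $\sigma_I$, I would identify, for each $t$, the cone $\tau_t\in N(S_t)$ whose relative interior contains $v$: by the above, $p_t(v)$ lies in the relative interior of $\cone(p_t(e_i);\ i\in I)$, and as $|I|\le c<c+2$ the ray labels occurring here form a proper subset of the $c+2$ labels of $\PP^{c+1}$, so this is an honest cone of that fan and $\tau_t$ is its $p_t$-preimage. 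The crux is then the coordinate computation $\bigcap_{t=1}^{r-c}\tau_t=\sigma_I$. If $0\notin I$, one obtains $\tau_t=\{w\in\QQ^r:\ w_j=0\text{ for }j\in J_t\setminus I,\ w_j\ge 0\text{ for }j\in J_t\cap I\}$, and intersecting over $t$ — using $\bigcup_t J_t=\{1,\ldots,r\}$ and that each index $c+t$ lies in exactly one $J_t$ — collapses this to $\sigma_I$. If $0\in I$, writing $I^+:=I\setminus\{0\}$, one obtains $\tau_t=\{w\in\QQ^r:\ \exists\,\beta\le 0\text{ with }w_j=\beta\text{ for }j\in J_t\setminus I^+,\ w_k\ge\beta\text{ for }k\in J_t\cap I^+\}$; here one uses in addition that $\{1,\ldots,c\}\setminus I\ne\emptyset$ (forced by $0\in I$ and $|I|\le c$) to see that the number $\beta$ must be one and the same for all $t$, whence again $\bigcap_t\tau_t=\sigma_I$. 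Thus $\sigma_I$ is the cell of $N(\tilde B)$ whose relative interior contains $v$, so $\sigma_I\in N(\tilde B)$; as $I$ was arbitrary, $\Sigma_{\PP_r}^{\le c}$ is a subfan of $N(\tilde B)$.

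For the final statement, Remark \ref{rem:tropXGenArr} gives $\trop(X)=\Sigma_{\PP_r}^{\le c}\times\QQ^s$ as quasifans in $\QQ^{r+s}$, while the normal quasifan of $\tilde B$ regarded in $\QQ^{r+s}$ is $\{C\times\QQ^s:\ C\in N(\tilde B)\}$, so the subfan relation just proved lifts verbatim. I expect the identity $\bigcap_t\tau_t=\sigma_I$, and especially the case $0\in I$, to be the only genuinely delicate point: it is precisely the combinatorics of which variables enter which relation $h_t$ — the core indices $1,\ldots,c$ in all of them, each remaining index $c+t$ in exactly one — that prevents the intersection from being strictly larger than $\sigma_I$.
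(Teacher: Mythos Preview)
Your proof is correct and proceeds along the same lines as the paper's: both express $\mathcal{N}(\tilde B)$ as the common refinement of the normal quasifans of the simplices $B(h_t)$, identify each factor via the projection onto the coordinates indexed by $J_t$, and then verify membership by an explicit coordinate computation. The only difference is one of economy: the paper first observes that $\Sigma_{\PP_r}^{\le c}$ is, by the definition of the tropical variety, automatically a refinement of a subfan of $\mathcal{N}(\tilde B)$ and hence reduces the check to the rays $\cone(e_k)$ alone, whereas you verify every cone $\sigma_I$ directly.
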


\begin{proof}
Let $e_1, \ldots, e_r$ denote the standard basis vectors of $\QQ^r$ and set $e_0 := - \sum e_i$.
As $\trop(\PP_c\cap \TT^r)$ is by definition a refinement of a subfan of $\mathcal{N}(\tilde{B})$, it suffices to show that $\cone(e_k)$ is a ray of $\mathcal{N}(\tilde B)$ for every $k = 0, \ldots, r$. For this set 
$$
J_t := \left\{j; \ U_j \text{ is a monomial of } f_t\right\} = \left\{0, \ldots, c, c+t\right\}.
$$
Then the lineality space of $\mathcal{N}(B(h_t))$, i.e. the maximal linear subspace contained in $\mathcal{N}(B(h_t))$,
is
$$
\sigma_t^{\mathrm{lin}} := \mathrm{lin}(e_j; \ j \in \left\{0, \ldots, r\right\} \setminus J_t).
$$
Now let $k \leq c$. Then $\cone(e_k) \times \sigma_t^{\mathrm{lin}} \in \mathcal{N}(B(h_t))$ holds for every $t= 1, \ldots, r-c$ and we claim
$$
\cone(e_k) = \bigcap_{t=1}^{r-c}\left(\cone(e_k) \times \sigma_t^{\mathrm{lin}}\right) =: \sigma \in  \mathcal{N}( \tilde B),
$$
i.e.\ we have to show the inclusion "$\supseteq$".
Let $a \in \sigma$ be any point. Then for every $t = 1, \ldots, r-c$ we have a description
$$
a = \sum_{j> c, \ j \neq t} a_{tj} e_j + b_{tk} e_k, 
\quad
\text{with}
\quad
a_{tj } \in \QQ, \ b_{tj} \in \QQ_{\geq 0}.
$$
Usig that $\left\{e_k, e_c, \ldots, e_r\right\}$ are linearly independant, 
we conclude $a_{tj}= 0$ for all $t,j$ 
and therefore $a \in \cone(e_k)$. 
We come to the case $k > c$. Here we have
$\cone(e_k) \times \sigma_k^{\mathrm{lin}} \in \mathcal{N}(B(h_k))$ and we claim
$$
\cone(e_k) = \left(\cone(e_k) \times \sigma_k^{\mathrm{lin}}\right) \cap \bigcap_{t=1, t \neq k}^{r-c} \sigma_t^{\mathrm{lin}} \in  \mathcal{N}( \tilde B).
$$
Analogously to the first case this can be verified by a direct calculation.
\end{proof}

\begin{lemma}\label{lem:diagramm}
Let $B \subseteq \QQ^{m}$ be any polyhedron and denote by $\mathcal{N}(B)$ its normal quasifan.
Let further $P \colon \QQ^{n} \rightarrow \QQ^m$ be a surjective linear map. 
Then $\mathcal{N}(P^*(B)) = P^{-1}( \mathcal{N}(B))$ holds.
\begin{proof}
Let $B$ be any polyhedron. Then
$\mathcal{N}(B) =\left\{C_F^\vee; \ F \preceq B \text{ face}\right\}$ holds,
where $C_F:= \cone(u-v; \ u \in B, v \in F)$. Thus we have
$$C_F^\vee = \left\{y; \ \bangle{y, u-v} \geq 0 \text{ for all } u \in B, v \in F \right\}.$$
Note that due to injectivity of $P^*$ the faces of $P^*(B)$ are precisely the images $P^*(F)$ of the faces
$F \preceq B$. We conclude
\begin{align*}
C_{P^*(F)}^\vee 
&= \left\{x; \ \bangle{x, P^*(u)-P^*(v)} \geq 0 \text{ for all } u \in B, v \in F\right\}
\\
&=\left\{x; \ \bangle{P(x), u-v} \geq 0 \text{ for all } u \in B, v \in F\right\}
\\
&=P^{-1}(C_F^\vee).
\end{align*}
\end{proof}
\end{lemma}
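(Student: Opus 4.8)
The plan is to deduce the identity from two ingredients: the bijection between faces of $B$ and faces of $P^*(B)$ coming from injectivity of the transpose $P^*$, and the adjunction $\langle x, P^*w\rangle = \langle Px, w\rangle$. I would first recall the presentation of the normal quasifan that the statement refers to: for a face $F\preceq B$ one sets $C_F:=\cone(u-v;\ u\in B,\ v\in F)$, so that its dual cone $C_F^\vee=\{y;\ \langle y,u-v\rangle\ge 0\text{ for all }u\in B, v\in F\}$ is precisely the normal cone of $F$, i.e.\ the set of linear forms whose maximum over $B$ is attained on all of $F$, and $\mathcal{N}(B)=\{C_F^\vee;\ F\preceq B\}$.

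The first step is the face correspondence. Since $P$ is surjective, $P^*$ is injective, so $P^*|_B$ is an affine-linear isomorphism of $B$ onto $P^*(B)$, and I would record the standard fact that this yields a bijection $F\mapsto P^*(F)$ between faces of $B$ and faces of $P^*(B)$. In the write-up I would supply the short argument via exposed faces: every face of a polyhedron is exposed, so if $G\preceq P^*(B)$ is exposed by $x\in\QQ^n$, then by the adjunction the set of maximizers of $x$ on $P^*(B)$ is exactly $P^*$ of the face of $B$ exposed by $Px$, hence $G=P^*(F)$ for some $F$; conversely a face $F\preceq B$ is exposed by some $y\in\QQ^m$, and writing $y=Px$ (here surjectivity of $P$ enters a second time) exhibits $P^*(F)$ as the face of $P^*(B)$ exposed by $x$.

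The second step is a direct computation of the normal cone of each $P^*(F)$. Unwinding the definition and applying the adjunction gives
\begin{align*}
C_{P^*(F)}^\vee
&=\{x;\ \langle x,P^*u-P^*v\rangle\ge 0\text{ for all }u\in B, v\in F\}\\
&=\{x;\ \langle Px,u-v\rangle\ge 0\text{ for all }u\in B, v\in F\}\\
&=P^{-1}(C_F^\vee).
\end{align*}
Combining this with the face bijection of the first step, $\mathcal{N}(P^*(B))=\{C_{P^*(F)}^\vee;\ F\preceq B\}=\{P^{-1}(\tau);\ \tau\in\mathcal{N}(B)\}=P^{-1}(\mathcal{N}(B))$, which is the assertion; note this also exhibits the right-hand side as a bona fide quasifan, being the preimage of one under a linear map.

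I expect the only genuinely non-formal point to be the face correspondence in the first step: one must really use that $P^*$ is injective, equivalently that $P$ is surjective, together with the fact that faces of polyhedra are exposed, to conclude that the faces of $P^*(B)$ are exactly the sets $P^*(F)$. The normal-cone computation itself is just the transpose relation together with unwinding the definition of $C_F^\vee$.
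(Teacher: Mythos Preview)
Your proposal is correct and follows essentially the same approach as the paper: both arguments use injectivity of $P^*$ to obtain the face bijection $F\mapsto P^*(F)$, and then compute $C_{P^*(F)}^\vee=P^{-1}(C_F^\vee)$ via the adjunction $\langle x,P^*w\rangle=\langle Px,w\rangle$. The only difference is that you spell out the exposed-face argument for the face bijection, whereas the paper simply states this as a consequence of the injectivity of $P^*$.
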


To a $T$-invariant Weil divisor $D=\sum a_\varrho D_\varrho$ on $Z$ we assign a polyhedron:
$$B_{D} := \left\{u \in M_\QQ; \ \bangle{u, v_\varrho} \geq -a_\varrho \right\}\subseteq M_\QQ.$$

\begin{proposition}
\label{prop:acancompstruct}
Let $X:=X(A,P, \Sigma)\subseteq Z$ be an explicit general arrangement variety with 
ample anticanonical class, fix a $T$-invariant divisor $-k_X$ on $Z$ such that $-k_X|_X$ is an anticanonical divisor on $X$. Let $B_{-k_X}$ denote the polyhedron corresponding to $-k_X$ and let $B \subseteq \QQ^{r+s}$ and $\tilde B \subseteq \QQ^{r+s}$ be as in Construction \ref{constr:AnticanPQ} and Lemma \ref{lem:tropSubfan}.
Then we have the following equalities:
\begin{enumerate}
    \item 
    $P^*(B_{-k_X}) - k_X = B(- \mathcal{K}_X)$.
    \item
    $P^*(\tilde{B}) = B - (r-c)\cdot l_0$, where $l_0$ is identified with $(l_0, 0, \ldots, 0) \in \CC^{n+m}$
\end{enumerate}
In particular, the fan $\Sigma \sqcap \trop(X)$ is a subfan of the normal fan of $B_X$.
\end{proposition}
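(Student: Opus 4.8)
The argument proceeds through the three assertions in turn. Fix notation: let $Q\colon\ZZ^{n+m}\to K=\Cl(Z)=\Cl(X)$ be the grading map, so that $\ker Q=\im P^*$; let $\mathbf{l}_i\in\ZZ^{n+m}$ be the exponent vector of the monomial $T_i^{l_i}$ (so $\mathbf{l}_0=(l_0,0,\dots,0)$ in the sense of the statement); and let $e_{-k_X}\in\WDiv^T(Z)=\ZZ^{n+m}$ be the coefficient vector of $-k_X=\sum_\varrho a_\varrho D_\varrho$, so that $Q(e_{-k_X})=-\mathcal{K}_X$ and $(P^*u)_\varrho=\langle u,v_\varrho\rangle$. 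For (i) I would simply unwind the definitions: a point $w\in\QQ^{n+m}$ lies in $B(-\mathcal{K}_X)=Q^{-1}(-\mathcal{K}_X)\cap\gamma_{n+m}$ exactly when $w-e_{-k_X}\in\im P^*$, say $w=e_{-k_X}+P^*(u)$, and $w_\varrho=a_\varrho+\langle u,v_\varrho\rangle\ge 0$ for all $\varrho$; the latter is the condition $u\in B_{-k_X}$. Hence $B(-\mathcal{K}_X)=e_{-k_X}+P^*(B_{-k_X})$, which is (i), the ``$-k_X$'' of the statement denoting translation by $e_{-k_X}$.

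For (ii) I would compute the two Newton polytopes and transport their vertices along $P^*$. Since any $c+1$ columns of $A$ are linearly independent, Laplace expansion of the determinants defining $g_t$ and $h_t=f_t(1,U_1,\dots,U_r)$ along their bottom rows shows that $g_t$ has exactly the monomials $T_0^{l_0},\dots,T_c^{l_c},T_{c+t}^{l_{c+t}}$, and $h_t$ a nonzero constant term together with $U_1,\dots,U_c,U_{c+t}$, all with nonzero coefficients. Writing $e_1,\dots,e_r$ for the standard basis of $\QQ^r$, this gives $B(g_t)=\conv(\mathbf{l}_0,\dots,\mathbf{l}_c,\mathbf{l}_{c+t})$ and $B(h_t)=\conv(0,e_1,\dots,e_c,e_{c+t})\subseteq\QQ^r\subseteq\QQ^{r+s}$. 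The $i$-th row of $P$ carries $-l_0$ in the $T_0$-block, $l_i$ in the $T_i$-block and zeros elsewhere, so $P^*(e_i)=\mathbf{l}_i-\mathbf{l}_0$ for $1\le i\le r$ and $P^*(0)=0$; hence $P^*(B(h_t))=B(g_t)-\mathbf{l}_0$. As $P^*$ is linear it commutes with Minkowski sums, so summing over $t=1,\dots,r-c$ yields $P^*(\tilde B)=B-(r-c)\mathbf{l}_0$.

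For the concluding statement I would combine (i) and (ii) into
\[
B(-\mathcal{K}_X)+B-e_\Sigma=P^*(B_{-k_X}+\tilde B)+w_0,\qquad w_0:=e_{-k_X}+(r-c)\mathbf{l}_0-e_\Sigma,
\]
and then verify $w_0\in\ker Q=\im P^*$: the computation in (ii) shows $\mathbf{l}_i-\mathbf{l}_0\in\im P^*$, so $Q(\mathbf{l}_0)$ is the common $K$-degree of the monomials of each $g_t$ and $Q((r-c)\mathbf{l}_0)=\sum_t\deg(g_t)$; substituting this and the canonical class formula $\mathcal{K}_X=\mathcal{K}_Z+\sum_t\deg(g_t)$ into $Q(w_0)=-\mathcal{K}_X+Q((r-c)\mathbf{l}_0)-Q(e_\Sigma)$, together with the defining property of $e_\Sigma$, gives $Q(w_0)=0$. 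Writing $w_0=P^*(u_0)$ and using injectivity of $P^*$,
\[
B_X=(P^*)^{-1}\bigl(P^*(B_{-k_X}+\tilde B+u_0)\bigr)=B_{-k_X}+\tilde B+u_0,
\]
so up to a translation $B_X$ is the Minkowski sum $B_{-k_X}+\tilde B$, whence $\mathcal{N}(B_X)=\mathcal{N}(B_{-k_X})\sqcap\mathcal{N}(\tilde B)$. By Lemma~\ref{lem:tropSubfan}, $\trop(X)$ is a subfan of $\mathcal{N}(\tilde B)$. Moreover $-k_X$ is an ample $\QQ$-Cartier $T$-divisor on $Z$: the embedding $X\subseteq Z$ is neat with $\Pic(Z)\cong\Pic(X)$ and matching ample classes (Remark~\ref{rem:descripZ}), and $X$ is $\QQ$-Gorenstein with $-\mathcal{K}_X$ ample, so the $T$-invariant divisor $-k_X$ has ample $\QQ$-Cartier class on the projective toric variety $Z$; hence $\mathcal{N}(B_{-k_X})=\Sigma$. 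Therefore every cone $\sigma\cap\tau$ with $\sigma\in\Sigma$ and $\tau\in\trop(X)$ is a cone of $\Sigma\sqcap\mathcal{N}(\tilde B)=\mathcal{N}(B_X)$, i.e.\ $\Sigma\sqcap\trop(X)$ is a subfan of $\mathcal{N}(B_X)$.

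The main obstacle is the bookkeeping that realises $B_X$ as a \emph{translate} of $B_{-k_X}+\tilde B$, i.e.\ checking that the residual shift $w_0$ lands in $\im(P^*)$; here the sign conventions in the canonical class formula and in the chosen representative $e_\Sigma$ have to be tracked with care. A secondary point is upgrading ampleness of $-\mathcal{K}_X$ on $X$ to ampleness and $\QQ$-Cartier-ness of $-k_X$ on $Z$ — precisely where neatness of the embedding and the $\QQ$-Gorenstein hypothesis enter.
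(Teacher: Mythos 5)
Your proof is correct and follows essentially the same route as the paper: parts (i) and (ii) are exactly the ``direct calculation'' the paper alludes to, and your argument for the supplement --- realizing $B_X$ as a translate of the Minkowski sum $B_{-k_X}+\tilde B$ and combining $\Sigma\subseteq\mathcal{N}(B_{-k_X})$ (ampleness via Remark \ref{rem:descripZ}) with Lemma \ref{lem:tropSubfan} --- is equivalent to the paper's use of Lemma \ref{lem:diagramm} upstairs in $\QQ^{n+m}$ followed by projection along $P$, with your version making explicit the check that the residual shift $w_0$ lies in $\im(P^*)$, which the paper leaves implicit. One remark: that check forces $Q(e_\Sigma)=-\mathcal{K}_Z$ rather than $\mathcal{K}_Z$ as Construction \ref{constr:AnticanPQ} literally states (with the canonical class one would get $Q(w_0)=-2\mathcal{K}_Z$ and $B_X=\emptyset$); this is a sign slip in the construction rather than in your argument, as the proof of Theorem \ref{thm:compACCs} confirms the intended choice $e_\Sigma=\sum e_\varrho$.
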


\begin{proof}
The two equalities follow by direct calculation.
We prove the supplement.
Using Lemma \ref{lem:diagramm} we obtain
$$P^{-1}(\mathcal{N}(B_{-k_X})) = \mathcal{N}(B(-\mathcal{K}_X))
\quad \text{ and }
\quad
P^{-1}(\mathcal{N}(\tilde{B})) = \mathcal{N}(B).$$
As $-k_X$ is ample due to Remark \ref{rem:descripZ}, the fan $\Sigma$ is a subfan of the normal fan $\mathcal{N}(B_{-k_X})$, and Lemma \ref{lem:tropSubfan} shows that 
$\trop(X)$ is a subfan of $\mathcal{N}(\tilde{B})$. It follows that 
$P^{-1}(\Sigma) \sqcap P^{-1}(\trop(X))$
is a subfan of the normal fan of $B(-\mathcal{K}_X) + B$. Projecting the involved fans via $P$ to $\QQ^{r+s}$ gives the assertion.
\end{proof}

\begin{proof}[Proof of Theorem \ref{thm:compACCs}]
Let $\sigma' \in \Sigma'$ be any cone. Then due to Proposition \ref{prop:acancompstruct}
we have $\sigma' \in \mathcal{N}(B_X)$. Fix any maximal cone $\tau \in \mathcal{N}(B_X)$ with $\sigma' \preceq \tau$. Then
we have $\sigma' \preceq \sigma \cap \lambda_I \preceq \tau$,
where $\lambda_I$ is a maximal 
leaf of $\trop(X) = \trop(X')$ and 
$\sigma \in \Sigma$ holds. 
Denote by $u \in B_X$ the vertex corresponding to $\tau$. 
Then we have a decomposition $P^*(u) = \mu + \nu - e_\Sigma$,
with $\mu \in B(-\mathcal{K}_X)$ and $\nu \in B$.
We claim that the family
$$(Z'_{\sigma'}, D_{\sigma'})_{\sigma' \in \Sigma'}, 
\quad
\text{ with }
\quad
D_{\sigma'} = \sum_{\varrho \in (\Sigma')^{(1)}\cap \Sigma^{(1)}}\bangle{\nu, e_\varrho} D_\varrho
- \sum_{\varrho \in (\Sigma')^{(1)}} D_\varrho$$
is a toric canonical $\varphi$-family
and $\div(\chi^u) = \varphi_*(D_{\sigma'})$ holds on $X_\sigma$.

In order to verify the claim
we first show that $\bangle{\nu, e_\varrho} = 0$ holds for all
$\varrho \in (\sigma')^{(1)} \cap \sigma^{(1)}$.
Denote by $\tilde{\nu}$ the vertex of $\tilde{B}$ corresponding to $\nu$.
Then $\tilde{\nu}$ defines the maximal cone
$$\left\{x \in \QQ^{r+s}; \ \bangle{x, u-\tilde{\nu}} \geq 0 \text{ for all } u \in \tilde{B} \right\},$$
which contains $\lambda_I$.
After suitably renumbering we may assume
$I = \left\{1, \ldots, c\right\}$
and thus $\tilde{\nu}_1= \ldots = \tilde{\nu}_c = 0$. With $P^*(\tilde{B}) = B - (r-c) \cdot l_0$ we obtain that 
$$
\bangle{\nu, e_{\varrho}} = 
\bangle{P^*(\tilde{\nu}) - (r-c)l_0, e_{\varrho}}
=
\bangle{\tilde{\nu}, v_{\varrho}}
- 
\bangle{(r-c)l_0, e_{\varrho}}
=
0
$$ 
holds for all $\varrho \in \sigma^{(1)} \cap (\sigma')^{(1)}$.
This shows that 
$(Z'_{\sigma'}, D_{\sigma'})_{\sigma' \in \Sigma'}$ is a toric canonical $\varphi$-family.

It is only left to show that 
$\div(\chi^u) = \varphi_*(D_{\sigma'})$ holds on $X_\sigma$.
We fix a $T$-invariant divisor 
$-k_X = \sum_{\varrho\in \Sigma^{(1)}} a_\varrho D_\varrho$ 
whose pullback $-k_X|_X$ is an anticanonical divisor on $X$ and
denote by $\tilde{\mu}$ 
the vertex in $B_{-k_X}$ 
corresponding 
to $\mu$.
Then, as ${-k_X}|_X$ and therefore $-k_X$ is ample, we have
$\bangle{\tilde{\mu},v_\varrho} = - a_\varrho$ for all rays $\varrho \in \sigma^{(1)}$, see \cite[Prop. 6.2.5]{CoLiSc2011}.
We conclude
$$\bangle{\mu, e_\varrho}
=
\bangle{P^*(\tilde{\mu})- k_X, e_\varrho}
=
\bangle{\tilde{\mu}, v_\varrho} + \bangle{-k_X, e_\varrho} 
= 
0.
$$
Since $\bangle{u, v_\varrho} = \bangle{\mu + \nu - e_\Sigma, e_\varrho}$ holds, this completes the proof.
\end{proof}

\begin{example}\label{ex:RAP6}
Consider the variety $X:= X(A,P,\Sigma) \subseteq Z$ from Examples \ref{ex:RAP1}, \ref{ex:RAP2}, \ref{ex:RAP3}, \ref{ex:RAP4} and \ref{ex:RAP5}. 
As before, we denote the  primitive ray generators of~$\Sigma$ by
$$
[v_{01}, v_{02}, v_{11}, v_{21},v_{31}]
=
\left[
\begin{array}{ccccc}
-1&-2&2&0&0\\
-1&-2&0&2&0\\
-1&-2&0&0&4\\
-1&-3&1&1&1
\end{array}
\right]
$$
and the defining relation of the Cox ring of $X$ by $g = T_{01}T_{02}^2 + T_{11}^2 + T_{21}^2 + T_{31}^4$. 
The common refinement $\Sigma' = \Sigma \sqcap \trop(X)$ is pure of dimension $3$ and 
we have
$$
(\Sigma')^{(1)} = \Sigma^{(1)} \cup 
\left\{\cone(e_4), \ \cone(-e_4)\right\}.$$ Further refining this fan we
obtain a smooth toric variety $Z''$ whose fan has $72$ maximal cones and primitive ray generators given by the columns of the matrix
$$P_2:=
{\tiny\setlength{\arraycolsep}{2pt}
\left[
 \begin{array}{ccccccccccccccccccccccccccccccc} 
 -2&-2&-1&-1&-1&-1&-1&-1&0&0&0&0&0&0&0&0&0&0&0&0&0&0&0&1&1&1&1&1&1&2&2
\\ 
-2&-2&-1&-1&-1&-1&-1&0&-1&0&0&0&0&0&0&0&1&1&1&1&1&2&2&0&0&0&0&0&1&0&0
\\ 
-2&-1&-1&-1&0&0&1&-1&-1&0&0&1&1&2&3&4&0&0&1&1&2&0&1&0&0&1&1&2&0&0&1
\\
-3&-3&-2&-1&-2&-1&-1&-1&-1&-1&1&0&1&1&1&1&0&1&0&1&1&1&1&0&1&0&1&1&1&1&1
\end{array}\right].}
$$
We show that this example leaves the framework in which \cite[Thm. 1.4]{BeHaHuNi2016} can be applied
as $X \subseteq Z$ is not \emph{tropical resolvable} in their sense:
Note that we have
$\cone(v_{02}, v_{31}),\
\cone(v_{21},v_{31}) \in \Sigma'.
$
In particular, any regular refinement of $\Sigma'$ contains the following rays:
$$
\varrho_1:=\cone([-1,-1,1,-1])
\quad
\text{and}
\quad
\varrho_2:=\cone([1,1,0,1]).
$$
Let $P''$ be a matrix whose columns are the primitive generators of the rays of a regular refinement $\Sigma''$ of $\Sigma'$.
Then the \emph{shift} of $g$ with respect to $P''$ and $P$ is the unique polynomial 
$\tilde{g} \in \CC[T_{\varrho}; \ \varrho \in (\Sigma'')^{(1)}]$ without monomial factors satisfying that its push with respect to $P''$ equals the push of $g$ with respect to $P$.
In particular, we have $\tilde{g}=m_1+m_2+m_3+m_4$, where the $m_i$ satisfy
$$
T_{\varrho_1}|m_i \Leftrightarrow 
i = 1,2
\quad
\text{ and}
\quad
T_{\varrho_2}|m_i \Leftrightarrow i = 3,4
$$
In the example $P'' = P_2$ we have
\begin{align*}
m_1&:=T_{{24}}T_{{25}}T_{{26}}T_{{27}}T_{{28}}T_{{29}}{T_{{30}}}^{2}{T_{{31}
}}^{2}T_{{9}}
\\
m_2&:=T_{{17}}T_{{18}}T_{{19}}T_{{20}}T_{{21}}{T_{{22}}}^{2}{T
_{{23}}}^{2}T_{{29}}T_{{8}}
\\
m_3&:={T_{{7}}}^{2}T_{{12}}T_{{13}}{T_{{14}}}^{2
}{T_{{15}}}^{3}{T_{{16}}}^{4}T_{{19}}T_{{20}}{T_{{21}}}^{2}T_{{23}}T_{
{26}}T_{{27}}{T_{{28}}}^{2}T_{{31}}T_{{2}}T_{{5}}T_{{6}}
\\
m_4&:={T_{{1}}}^{2}
{T_{{2}}}^{2}T_{{3}}T_{{4}}T_{{5}}T_{{6}}T_{{7}}T_{{8}}T_{{9}},
\end{align*}
where $T_{7} = T_{\varrho_1}$ and $T_{29} = T_{\varrho_2}$.
Now, assume $X$ is tropical resolvable in the sense of \cite[Def. 2.2]{BeHaHuNi2016}. Then there exists a regular refinement $\Sigma''$ of $\Sigma'$ giving rise to a toric variety $Z''$ such that
the Cox ring of
the proper transform $X''$ inside $Z''$
equals the freely graded ring
$\CC[T_{\varrho}; \ \varrho \in (\Sigma'')^{(1)}]/\bangle{\tilde{g}}.
$
This is a contradiction since 
$T_{\varrho_1}$
is not prime.
\end{example}

\section{3-dimensional canonical intrinsic quadrics of complexity two}\label{sec:quadrics}
In this section we treat the example class of {\em intrinsic quadrics}, 
i.e.\ varieties $X$ 
that admit a presentation of their Cox rings
$\RRR(X)$ by $\Cl(X)$-homogeneous generators, 
such that the ideal of relations is generated 
by a single quadratic polynomial.
Due to \cite[Prop 2.1]{FaHa2017} any intrinsic 
quadric
can be realized as an explicit general arrangement
variety $X=X(A,P,\Sigma) \subseteq Z$.
More precisely the defining relation of its Cox ring $R(A,P)$ is a quadric of the following form:
$$
g = 
T_{01}T_{02}
+ 
\ldots
+ 
T_{(q-1)1}T_{(q-1)2}
+
T_{q1}^2
+
\ldots
+ 
T_{r1}^2
\qquad
\text{with}
\quad
0 \leq q \leq r +1
$$

In particular, in the $\QQ$-Gorenstein case we can use the explicit description 
of the anticanonical complex to prove the classification result
of Theorem \ref{thm:quadrics}. 
As a direct consequence of this classification 
and the explicit bounds on the Picard number 
of $\QQ$-factorial Fano intrinsic quadrics
given in Proposition \ref{proposition:quadricsPicBound1},
we obtain in Proposition \ref{prop:quadricsTerminal} 
that there exists no $\QQ$-factorial Fano intrinsic quadric 
of complexity $c= \mathrm{dim}(X)- 1$ having at most terminal singularities.

Let us briefly recall the necessary facts about the combinatorial data encoding geometric properties of explicit general arrangement varieties. For a more comprehensive survey we refer to  \cite{HaHiWr2019}.

Let $X:=X(A,P,\Sigma) \subseteq Z$ be an explicit general arrangement variety with Cox ring $R(A,P)$ and total coordinate space $\bar{X}\subseteq\bar{Z}=\CC^{n+m}$, where we regard the right hand-side as a toric variety corresponding to the positive orthant $\gamma:=\QQ^{n+m}_{\geq 0}$.
Denote by $e_{ij}$ resp. $e_k$ the standard basis vectors of $\ZZ^{n+m}$ 
corresponding to the generators $T_{ij}$ resp. $S_k$ of $R(A,P)$. The matrix $P$ together with the degree map $Q\colon \ZZ^{n+m}\rightarrow K:=\Cl(X)$ sending $e_{ij}$ resp. $e_k$ to the classes $w_{ij}:=[T_{ij}]$ resp. $w_k:=[S_k]$ in $\Cl(X)$ fit into the following mutually dual exact sequences:
$$
\begin{tikzcd}
0\arrow[r]&L\arrow[r]&\ZZ^{n+m}\arrow[r,"P"]&N&\\
0&K\arrow[l]&\ZZ^{n+m}\arrow[l,"Q"]&M\arrow[l]&0\arrow[l]
\end{tikzcd}
$$
For a face $\gamma_0\preceq\gamma$ 
we denote by $\gamma_0^*\preceq\QQ^{n+m}_{\geq 0}$ the
complementary face generated by all $e_{ij}$ and $e_k$ which are not contained in $\gamma_0$.
Every face $\gamma_0\preceq\gamma$
defines a torus orbit $\bar{Z}(\gamma_0)\subseteq\bar{Z}$ corresponding to the cone $\gamma_0^*$. We call $\gamma_0$ an {\em $\bar{X}$-face} if the orbit $\bar{Z}(\gamma_0)$ intersects $\bar{X}$ non-trivially.
Now assume that $X$ is projective,
denote by $\mathrm{Ample}(X) \subseteq K_\QQ$ the cone of ample divisors and let $u\in\mathrm{Ample}(X)$ be any point. Then we call an $\bar{X}$-face $\gamma_0\preceq\gamma$ an {\em $X$-face} if the cone $Q(\gamma_0)\subseteq K_\QQ$ contains $u$ in its relative interior. 
This gives rise to a bijection 
$$\left\{X\text{-faces of } \gamma \right\}\rightarrow\left\{\text{cones in }\Sigma \right\},\qquad\gamma_0\mapsto P(\gamma_0^*).$$

In terms of the degree map and the $X$-faces, the cones of semiample and ample divisor classes of $X$ in $K_\QQ$ are given as 
$$
\SAmple(X)=\bigcap\limits_{\tiny
\begin{array}{c}
\gamma_0\preceq\gamma 
\\ 
X \text{-face}
\end{array}}Q(\gamma_0),
\qquad
\Ample(X)=\bigcap\limits_{\tiny
\begin{array}{c}
\gamma_0\preceq\gamma 
\\ 
X \text{-face}
\end{array}}Q(\gamma_0)^\circ.
$$

During this section we will make extensive use of the following characterization of $\QQ$-factoriality, see \cite[Prop. 5.4]{HaHiWr2019}.

\begin{remark}
Let $X:=X(A,P,\Sigma) \subseteq Z$ be a projective explicit general arrangement variety. Then the following statements are equivalent:
\begin{enumerate}
\item $X$ is $\QQ$-factorial.
    \item For every $X$-face $\gamma_0\preceq\gamma$ the image $Q(\gamma_0)$ is of full dimension inside $K_\QQ$.
    \item The semiample cone $\SAmple(X)$ is of full dimension inside $K_\QQ$.
\end{enumerate}
\end{remark}

\begin{lemma}\label{lem:quadricsTool}
Let $X:=X(A,P,\Sigma)\subseteq Z$ be a Fano intrinsic quadric. Then after suitably renumbering we may assume $n_0\geq\ldots\geq n_r$ and we are in one of the following situations:
\begin{enumerate}
    \item We have $n_{r-1}=n_r=1$ and
    $\cone(e_{(r-1)1},e_{r1},e_1,\ldots,e_m)$ is an $X$-face.
        \item We have $n_0=\ldots=n_{r-1}=2>n_r=1$ and $\cone(e_{01},e_{02},e_{r1},e_1,\ldots,e_m)$ is an $X$-face.
    \item We have $n_0=\ldots=n_r=2$ and 
    $\cone(e_{01},e_{02},e_{11},e_{12},e_1,\ldots,e_m)$ is an $X$-face.
\end{enumerate}
\end{lemma}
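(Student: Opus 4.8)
The statement is about the combinatorial structure of a Fano intrinsic quadric $X(A,P,\Sigma)\subseteq Z$, so the proof will combine the normal form of the defining quadric $g$ with basic facts about $X$-faces and the ampleness of the anticanonical class. Recall that for an intrinsic quadric the relation is
$$
g \ = \ T_{01}T_{02} + \ldots + T_{(q-1)1}T_{(q-1)2} + T_{q1}^2 + \ldots + T_{r1}^2,
$$
so after renumbering we may assume that the "pairs" come first and the "squares" afterwards; in particular $n_i\in\{1,2\}$ for every $i$, with $n_i=2$ exactly for the indices $i\le q-1$ belonging to a pair. The trichotomy in the Lemma is then simply the case distinction according to how many indices $i$ have $n_i=1$, i.e. how many squares occur: case (iii) is $q=r+1$ (no squares, all pairs), case (ii) is $q=r$ (exactly one square), and case (i) is $q\le r-1$ (at least two squares). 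So the renumbering $n_0\ge\ldots\ge n_r$ is immediate from the shape of $g$, and the real content is the existence of the claimed $X$-face in each case.

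First I would recall the characterization of $\bar X$-faces for a complete intersection Cox ring: a face $\gamma_0\preceq\gamma$ is an $\bar X$-face if and only if the affine cone $\bar X$ meets the torus orbit $\bar Z(\gamma_0)$, which for the hypersurface $V(g)$ can be read off $g$ directly — $\gamma_0$ is an $\bar X$-face precisely when the restriction of $g$ to the coordinate subspace spanned by $\{e_{ij},e_k : \text{generator lies in }\gamma_0\}$ does not become a unit times a single monomial, equivalently when that restriction either vanishes identically or has at least two terms (so that $V(g)$ still meets the corresponding torus). Concretely: a monomial $T_{01}T_{02}$ survives in the restriction iff both $e_{01},e_{02}$ lie in $\gamma_0$, and $T_{i1}^2$ survives iff $e_{i1}\in\gamma_0$. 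Hence each of the three candidate faces — $\cone(e_{(r-1)1},e_{r1},e_1,\ldots,e_m)$, $\cone(e_{01},e_{02},e_{r1},e_1,\ldots,e_m)$, $\cone(e_{01},e_{02},e_{11},e_{12},e_1,\ldots,e_m)$ — is readily checked to be an $\bar X$-face: in each case the listed generators make at least two terms of $g$ survive (two squares in (i), one pair and one square in (ii), two pairs in (iii)), so the restricted quadric is not a monomial and $\bar X$ meets the corresponding orbit.

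The remaining point — and the one I expect to be the main obstacle — is upgrading "$\bar X$-face" to "$X$-face", i.e. showing that for a suitable ample class $u\in\mathrm{Ample}(X)$ the cone $Q(\gamma_0)$ contains $u$ in its relative interior. Here is where the Fano hypothesis is used: the anticanonical class $-\mathcal K_X$ is ample, so it is enough to show $-\mathcal K_X \in Q(\gamma_0)^\circ$. Using the formula $\mathcal K_X = -\sum_{\varrho\in\Sigma^{(1)}}\deg(T_\varrho) + \sum_i \deg(g_i)$ recalled before Proposition~\ref{prop:disc}, specialized to a single quadratic relation, one computes $-\mathcal K_X = \sum_{ij}\deg(T_{ij}) + \sum_k \deg(S_k) - \deg(g)$; since $\deg(g)$ equals the degree of each surviving monomial, subtracting it off leaves a nonnegative combination supported exactly on the generators not lying in the complementary face $\gamma_0^*$, i.e. a relative-interior point of $Q(\gamma_0)$. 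I would make this precise by choosing, in each of the three cases, the explicit monomial of $g$ (a square $T_{(r-1)1}^2$ or $T_{r1}^2$ in (i), a pair $T_{01}T_{02}$ and a square $T_{r1}^2$ in (ii), the two pairs in (iii)) whose variables all lie in $\gamma_0$, writing $-\mathcal K_X$ as $\sum\deg(w)$ over generators of $\gamma_0$ plus a correction that still lands in the relative interior of $Q(\gamma_0)$, and invoking the fact that $-\mathcal K_X$, being ample, lies in the relative interior of $Q(\gamma_0)$ for every $X$-face — equivalently, that any face whose image contains the ample anticanonical class in its relative interior is automatically an $X$-face. The only subtlety is checking that the correction does not push $u$ out of the relative interior onto a proper face, which follows because every generator appearing in $g$'s chosen monomial, hence in $\gamma_0$, contributes strictly positively; I would verify this case by case, which is a short linear-algebra computation with the degree matrix $Q$ and poses no conceptual difficulty once the normal form of $g$ is fixed.
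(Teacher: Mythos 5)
Your proposal follows the same route as the paper: read off the trichotomy from the normal form of $g$, observe that the three candidate cones are $\bar X$-faces, and upgrade to $X$-faces by showing that the ample class $-\mathcal{K}_X$ lies in the relative interior of $Q(\gamma_0)$, i.e.\ is a strictly positive combination of the degrees of the generators of $\gamma_0$. The one loosely stated step is the claim that $\sum_{ij}w_{ij}+\sum_k w_k-\deg(g)$ is already visibly supported on $\gamma_0$ (it is not, since generators outside $\gamma_0$ occur in the first sum); as in the paper one must first use that the monomials of $g$ pair up the degrees, giving $\sum_{ij}w_{ij}=\tfrac{2r+2-n^{(1)}}{2}\deg(g)$ and hence $-\mathcal{K}_X=\tfrac{2r-n^{(1)}}{2}\deg(g)+\sum_k w_k$, and then split the positive multiple of $\deg(g)$ over the monomials whose variables lie in $\gamma_0$ — which is precisely the case-by-case computation you defer to.
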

\begin{proof}
First note that in any of the above cases the cones under consideration are $\bar{X}$-faces.
To prove that they are indeed $X$-faces we show that their images in $K_\QQ$ contain $-\KKK_X$ in their relative interior.
Since all of the cones are pointed it suffices to show that $-\KKK_X$ can be written as a strictly positive combination over all extremal rays of the respective cone. For this let $n^{(1)}$ be the number of indices $i$ with $n_i=1$. Then we have
$$-\KKK_X=\frac{2r-n^{(1)}}{2}\deg(g)+\sum w_k\quad \text{ where }\quad 0<\frac{2r-n^{(1)}}{2}\leq r.$$
In case $(i)$ we have $\deg(g)=2w_{(r-1)1}=2w_{r1}$ 
and in case $(ii)$ we have $\deg(g)= w_{01}+w_{02}=2w_{r1}$,
which proves the assertion in these cases.
Finally, in case $(iii)$ we have $n^{(1)}=0$ and thus obtain $$-\KKK_X=r\deg(g)+\sum w_i = (w_{01}+w_{02})+(r-1)(w_{11}+w_{12})+\sum w_k.$$
\end{proof}

\begin{proposition}\label{proposition:quadricsPicBound1}
Let $X:=X(A,P,\Sigma)\subseteq Z$ be a $\QQ$-factorial, Fano intrinsic quadric. Then $\varrho(X)\leq 3+m$ holds.
\end{proposition}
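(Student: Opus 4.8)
The strategy is to bound the Picard number $\varrho(X) = \dim_\QQ \Cl(X)_\QQ = (n+m) - (r+s)$ by producing a single $X$-face $\gamma_0 \preceq \gamma$ whose image $Q(\gamma_0) \subseteq K_\QQ$ is full-dimensional, which exists by $\QQ$-factoriality, and then arguing that the rank of $Q$ restricted to the variables \emph{not} generating $\gamma_0$ is constrained by the structure of $P$. Concretely, one uses Lemma \ref{lem:quadricsTool}: after renumbering, $X$ falls into one of three cases and we are handed an explicit $X$-face $\gamma_0$. In each case $\gamma_0^*$ is generated by a bounded number of the $e_{ij}$ (the ``pair variables'' $e_{01},e_{02}$ or the ``square variables'' $e_{(r-1)1},e_{r1}$) together with possibly none of the $e_k$, so the corresponding cone $P(\gamma_0^*) \in \Sigma$ has at most four rays. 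Since $Q(\gamma_0)$ is full-dimensional, the complementary generators $e_{ij}, e_k \notin \gamma_0$ — there are exactly $(r+s)$ of them counted with the rank of $P$, i.e. $\dim N = r+s$ of the columns of $P$ form the basis — and the remaining $n+m-(r+s)$ generators are what contribute to $\varrho(X)$.

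More precisely, the plan is: first I would recall that for an $X$-face $\gamma_0$ the associated cone $\sigma := P(\gamma_0^*) \in \Sigma$ and the correspondence $\gamma_0 \mapsto P(\gamma_0^*)$ is a bijection onto the cones of $\Sigma$; in particular the rays of $\sigma$ are primitive generators among the columns $v_{ij}, v_k$ of $P$, and the generators of $\gamma_0^*$ biject with $\sigma^{(1)}$. Next, in each of the three cases of Lemma \ref{lem:quadricsTool} I would count: the face $\gamma_0$ contains all the $e_k$ (all $m$ of them), so $\gamma_0^*$ is generated only by some of the $e_{ij}$, and by the explicit description $\gamma_0^*$ has at most $4$ generators (case (i): at least $r-1$ of the columns $v_{01},\dots$ appear as rays together with... — one checks the count gives $|\sigma^{(1)}| \le$ a small number, but what matters is the complementary count). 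The clean way: $\varrho(X) = (n+m) - (r+s)$, and $n + m = \#\{\text{generators of }\gamma_0\} + \#\{\text{generators of }\gamma_0^*\}$. The generators of $\gamma_0^*$ number exactly $|\sigma^{(1)}| \le 4$ in cases (i),(ii) and one must verify this; meanwhile $Q$ maps the generators of $\gamma_0$ onto a full-dimensional cone in $K_\QQ \cong \QQ^{\varrho(X)}$ (plus torsion), which by itself does not bound anything — so the bound must instead come from $P$ being injective on $L$ of rank $\varrho(X)$, equivalently $\rk P = r+s$ and the generators of $\gamma_0^*$ being linearly independent columns of $P$, hence $|\sigma^{(1)}| \le r+s$ is automatic but we need the sharper input.

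The real mechanism, which I would spell out carefully: the $X$-face $\gamma_0$ has $P(\gamma_0^*) = \sigma$ a cone of $\Sigma$, and $\gamma_0$ itself is an $X$-face so $Q(\gamma_0)$ is full-dimensional of dimension $\varrho(X)$; since $Q$ has kernel of rank $r+s$, the face $\gamma_0$ must be generated by at least $\varrho(X)$ vectors, i.e. $\#\{\text{gens of }\gamma_0\} \ge \varrho(X)$, hence $\#\{\text{gens of }\gamma_0^*\} \le (n+m) - \varrho(X) = r+s$. Conversely the explicit $\gamma_0$ in Lemma \ref{lem:quadricsTool} has complementary face $\gamma_0^*$ generated by: in case (iii), $e_{11},e_{12}$ excluded... no — $\gamma_0^*$ is generated by everything NOT in $\gamma_0$. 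In case (iii) $\gamma_0 = \cone(e_{01},e_{02},e_{11},e_{12},e_1,\dots,e_m)$ so $\gamma_0^*$ is generated by all $e_{ij}$ with $i \ge 2$, that is $2(r-1)$ vectors. Thus $2(r-1) \le r+s$... this isn't leading directly to $3+m$. I will therefore instead combine: $\varrho(X) = n+m-(r+s)$; in case (iii), $n = 2(r+1)$, so $\varrho(X) = 2r+2+m - r - s = r + 2 + m - s$, and one needs $r - s \le 1$, which should follow from Fano-ness / the columns of $P$ generating $\QQ^{r+s}$ together with the shape of $P$ (the upper $r$ rows are determined by the $l_i$). \textbf{The main obstacle} will be extracting, in each of the three cases, the correct inequality relating $r$, $s$ and $m$ from the constraint that $P$ has the prescribed block shape with primitive, pairwise-distinct columns spanning $\QQ^{r+s}$ and that $X$ is Fano — i.e. translating the combinatorial rigidity of the quadric relation (each $n_i \le 2$, at most the listed configuration) into the bound $n + m \le (r+s) + 3 + m$, equivalently $n \le r + s + 3$. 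Once that inequality is in hand for each case, the proposition is immediate.
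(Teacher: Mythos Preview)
Your proposal identifies the right ingredients (the explicit $X$-face from Lemma~\ref{lem:quadricsTool} together with $\QQ$-factoriality), but the argument you pursue never closes, and the ``main obstacle'' you flag is in fact the entire content of the proof. Counting generators of $\gamma_0$ gives you $\varrho(X) \le \#\{\text{generators of }\gamma_0\}$, which in cases (i) and (ii) yields $2+m$ and $3+m$ respectively, but in case (iii) the face $\gamma_0 = \cone(e_{01},e_{02},e_{11},e_{12},e_1,\ldots,e_m)$ has $4+m$ generators, so you only get $\varrho(X) \le 4+m$. Your fallback plan of proving $n \le r+s+3$ directly from the shape of $P$ is not a viable route: in case (iii) one has $n = 2(r+1)$, so this would require $r \le s+1$, and there is no reason for such a bound to hold.

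What you are missing is that the degrees $w_{ij} = Q(e_{ij})$ are not independent: homogeneity of the single relation $g$ forces $\deg(g) = w_{i1}+w_{i2}$ for each $i$ with $n_i = 2$ and $\deg(g) = 2w_{i1}$ for each $i$ with $n_i = 1$. In case (iii) this gives $w_{01}+w_{02} = w_{11}+w_{12}$, so $Q(\gamma_0) = \cone(w_{01},w_{02},w_{11},w_{12},w_1,\ldots,w_m)$ lies in a subspace of dimension at most $3+m$; in case (ii) it gives $2w_{r1} = w_{01}+w_{02}$, bounding the dimension by $2+m$; in case (i) it gives $w_{(r-1)1} = w_{r1}$, bounding by $1+m$. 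Since $\QQ$-factoriality forces $Q(\gamma_0)$ to be full-dimensional, $\varrho(X) = \dim Q(\gamma_0) \le 3+m$. This single linear relation among the degrees is the whole mechanism; no analysis of $P$, $r$, or $s$ is needed.
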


\begin{proof}
We distinguish between the three cases treated in Lemma \ref{lem:quadricsTool} and show that the dimension of the $X$-faces occurring there
is at most $m+3$. Then using $\QQ$-factoriality of $X$,
we obtain the bound on $\varrho(X)$ as claimed.

In Case (i) of Lemma \ref{lem:quadricsTool} we obtain an $X$-face of dimension at most $1+m$ as $w_{r1} = w_{(r-1)1}$  holds
due to homogeneity of the defining relation $g$.
Similar, in Case (ii) we obtain an $X$-face of dimension at most $2+m$ as
$w_{r1} \in \mathrm{cone}(w_{01}, w_{02})$
holds. 
Consider Case (iii). 
Here we have $$w_{01} + w_{02} - w_{11} - w_{12} = \deg(g) - \deg(g) = 0.$$
In particular, the cone  $\mathrm{cone}(w_{01},w_{02},w_{11}, w_{12}, w_1, \ldots, w_m)$ is of dimension at most $3+m$. This completes the proof. 
\end{proof}

\begin{corollary}\label{cor:quadricsPicNum}
Let $X:=X(A,P,\Sigma)\subseteq Z$ be a $\QQ$-factorial, Fano intrinsic quadric of complexity $c=\dim(X)-1$. Then $\varrho(X)\leq 5$ holds.
\end{corollary}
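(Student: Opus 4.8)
The statement to prove is Corollary~\ref{cor:quadricsPicNum}: a $\QQ$-factorial Fano intrinsic quadric $X$ of complexity $c = \dim(X) - 1$ satisfies $\varrho(X) \le 5$.

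The plan is to combine Proposition~\ref{proposition:quadricsPicBound1}, which gives $\varrho(X) \le 3 + m$, with a bound on $m$ coming from the complexity hypothesis. Recall that for an explicit general arrangement variety $X = X(A,P,\Sigma)$ we have $\dim(X) = s + c$, where $s$ is the number of free variables $S_k$ in the Cox ring minus... no, wait: $m$ is the number of $S_k$-variables and $s$ is the rank of the $D$-block; but in fact for these varieties the relevant identity is $\dim(X) = s + c$ and $m \le s$ in general. Actually the cleanest route is: the complexity hypothesis $c = \dim(X) - 1$ together with $\dim(X) = s + c$ forces $s = 1$. Since the $S_k$ correspond to rays $\varrho_k$ and the columns of $P$ must span $\QQ^{r+s} = \QQ^{r+1}$, and since there are $r+1$ columns of the ``$l$-block'' type already (one block $v_{i1}$ for each $i = 0, \ldots, r$ when all $n_i = 1$, or more when some $n_i \ge 2$), we get a constraint. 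The key point I would isolate: with $s = 1$, how large can $m$ be?

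First I would invoke $\dim(X) = s + c$ and $c = \dim(X) - 1$ to conclude $s = 1$. Next I would examine the three cases of Lemma~\ref{lem:quadricsTool}. In each case the matrix $P$ has a specific block structure; the $D$-block is a single row (since $s = 1$), and the columns of $P$ must be pairwise distinct, primitive, and span $\QQ^{r+1}$. I would argue that $m$ is bounded: in the cases where many $n_i = 2$, the ``$T$-part'' of $P$ already has enough columns and high enough rank, forcing $m$ to be small; and since being Fano is a restrictive positivity condition on top of this, one shows $m \le 2$, hence $\varrho(X) \le 3 + m \le 5$. Concretely, I expect that in Case~(iii) of Lemma~\ref{lem:quadricsTool} one must have $m = 0$ or the $X$-face argument already used in Proposition~\ref{proposition:quadricsPicBound1} combined with $s=1$ collapses the Picard number further, while in Cases~(i) and~(ii) one can afford $m \le 2$.

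The main obstacle I anticipate is pinning down the exact bound on $m$ under the joint hypotheses ``$s = 1$'' and ``Fano'': one needs that the $S_k$-variables cannot proliferate without destroying either primitivity/distinctness of the columns of $P$ (a combinatorial constraint) or the Fano property (a positivity constraint on $-\KKK_X$ relative to the ample cone). I would handle this by a direct case analysis on the number of $S_k$, showing that $m \ge 3$ is incompatible with $X$ being a $\QQ$-factorial Fano variety of the required complexity — most likely by exhibiting that the cone $Q(\gamma_0)$ for the $X$-face $\gamma_0$ from Lemma~\ref{lem:quadricsTool} would then fail to have full dimension or fail to contain $-\KKK_X$ in its relative interior. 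Once $m \le 2$ is established, the corollary is immediate from Proposition~\ref{proposition:quadricsPicBound1}.
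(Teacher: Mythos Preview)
Your overall plan is correct and matches the paper's: deduce $s = 1$ from $\dim(X) = s + c$ and $c = \dim(X) - 1$, bound $m \le 2$, then apply Proposition~\ref{proposition:quadricsPicBound1}. However, you are overcomplicating the step $m \le 2$. No case analysis via Lemma~\ref{lem:quadricsTool} and no use of the Fano property or $\QQ$-factoriality are needed there.

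The missing observation is purely combinatorial and you already named it without realizing it suffices: by the block shape of $P$ in Construction~\ref{constr:R(A,P)}, the columns $v_k = P(e_k)$ corresponding to the variables $S_k$ have all zeros in the upper $r$ rows, so each $v_k$ lies on the last coordinate axis of $\QQ^{r+s} = \QQ^{r+1}$. Primitivity forces $v_k \in \{e_{r+1}, -e_{r+1}\}$, and since the columns of $P$ are required to be pairwise distinct, there can be at most two of them. Hence $m \le 2$ and $\varrho(X) \le 3 + m \le 5$. This is exactly the paper's one-line argument; your proposed detour through $X$-faces and the position of $-\KKK_X$ would work in principle but is unnecessary.
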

\begin{proof}
In this situation we have a one-dimensional lineality space $\lambda_{\lin}$. 
In particular, we have $s=1$
for the lower part of the matrix $P$. This implies $m\leq 2$ as the columns of $P$ are assumed to be pairwise different and primitive.
\end{proof}

\begin{proposition}\label{prop:quadricsPicardNumber}
Let $X:=X(A,P,\Sigma)\subseteq Z$ be a three-dimensional  $\QQ$-factorial
Fano intrinsic quadric of complexity $c=2$.
Then $\varrho(X)\leq 3$ holds.
Moreover, if $X$ is a full intrinsic quadric, then $\varrho(X) = 1$ holds.
\end{proposition}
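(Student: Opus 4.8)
The plan is to reduce the statement to an elementary count involving the columns of $P$, exploiting that the $X$-face estimates already underlying Proposition~\ref{proposition:quadricsPicBound1} become strictly sharper --- in fact contradictory --- in the ``large'' configurations of defining data.

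First I would pin down the shape of the data. Since $X$ is an intrinsic quadric, the Cox ring $R(A,P)$ has a single relation, so in the notation of Construction~\ref{constr:R(A,P)} one has $r-c=1$, hence $r=3$; and $\dim(X)=s+c=3$ forces $s=1$. As $X$ is $\QQ$-factorial, $\varrho(X)=\rank\Cl(X)=\rank K$, and since the columns of $P$ span $\QQ^{r+s}$ the dual map $P^{*}$ is injective, whence
$$
\varrho(X) \ = \ (n+m)-(r+s) \ = \ n+m-4.
$$
Moreover $s=1$ forces the column of $P$ attached to any free generator $S_k$ to have vanishing upper $r$-block, so that it equals $\pm e_{r+s}$; primitivity and pairwise distinctness of the columns of $P$ then give $m\le 2$.

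Next I would run the trichotomy of Lemma~\ref{lem:quadricsTool}, which for $r=3$ leaves, after renumbering, the options (i) $n_2=n_3=1$, (ii) $n_0=n_1=n_2=2>n_3=1$, and (iii) $n_0=\dots=n_3=2$. In cases (ii) and (iii) one has $n=7$, resp.\ $n=8$, so the identity above yields $\varrho(X)=3+m$, resp.\ $\varrho(X)=4+m$; on the other hand the argument in the proof of Proposition~\ref{proposition:quadricsPicBound1} exhibits an $X$-face $\gamma_0$ with $\dim Q(\gamma_0)\le 2+m$, resp.\ $\le 3+m$, and $\QQ$-factoriality forces $Q(\gamma_0)$ to be of full dimension in $K_\QQ$, i.e.\ $\varrho(X)\le 2+m$, resp.\ $\le 3+m$ --- a contradiction. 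Hence only case (i) occurs, and there the same argument uses the $X$-face $\cone(e_{(r-1)1},e_{r1},e_1,\dots,e_m)$, whose image $Q(\gamma_0)$ has dimension at most $1+m$ because $w_{(r-1)1}=w_{r1}$; together with $m\le 2$ this gives $\varrho(X)\le 1+m\le 3$, which is the first assertion. For the supplement, a full intrinsic quadric has $m=0$ (its defining relation involves every Cox ring generator), so case (i) again applies and gives $\varrho(X)\le 1$; since $X$, being Fano, is projective of positive dimension, $\varrho(X)\ge 1$, and hence $\varrho(X)=1$.

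The step I expect to require the most care is this case analysis: one has to notice that the $X$-face dimension estimates already present in the proof of Proposition~\ref{proposition:quadricsPicBound1} --- there only good enough for $\varrho(X)\le 3+m$ --- are, in combination with the exact value $\varrho(X)=n+m-4$, incompatible in configurations (ii) and (iii) of Lemma~\ref{lem:quadricsTool}, so that only the mildest configuration (i) can occur. Everything else amounts to routine bookkeeping with the combinatorics of $P$, $Q$ and the $X$-faces, together with the projectivity of a Fano variety for the final equality.
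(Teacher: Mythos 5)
Your proof is correct and rests on exactly the same ingredients as the paper's: the $X$-faces of Lemma~\ref{lem:quadricsTool}, the dimension drop of their $Q$-images forced by the homogeneity of the quadric, the $\QQ$-factoriality criterion, and the count $\varrho(X)=n+m-4$ with $m\le 2$. The only difference is organizational --- you run the trichotomy of Lemma~\ref{lem:quadricsTool} directly and eliminate cases (ii) and (iii) outright, whereas the paper enumerates the configurations with $\varrho(X)\ge 4$ (and, for the supplement, $\varrho(X)\ge 2$ with $m=0$) and kills each one --- so this is essentially the paper's argument in a slightly cleaner packaging.
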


\begin{proof}
Due to Corollary \ref{cor:quadricsPicNum} we have $\varrho(X)\leq 5$. Assume $\varrho(X)\geq 4$. We go through the possible configurations of $n=n_0+\ldots+n_3$ and $0\leq m\leq 2$.
After renumbering the columns of $P$ we arrive at one of the following cases: \begin{enumerate}
    \item $n_0=\ldots=n_2=2>n_3 = 1,\ m=1$
    \item $n_0=n_1=2>n_2=n_3 = 1,\ m=2$
    \item $n_0=\ldots=n_3=2,\ m=1$
    \item $n_0=\ldots=n_2=2>n_3 = 1,\ m=2$
\end{enumerate}
\noindent In the Cases $(i)$ and $(ii)$ we have $\varrho(X)=4$ and in the Cases $(iii)$ and $(iv)$, we have $\varrho(X)=5$.
Applying Lemma \ref{lem:quadricsTool} $(ii)$ in the Cases $(i)$ and $(iv)$ we obtain a three-dimensional $X$-face which contradicts $\QQ$-facatoriality of $X$. 
Similar, applying Lemma \ref{lem:quadricsTool} $(i)$ in the Cases $(ii)$ and Lemma \ref{lem:quadricsTool} $(iii)$ in the Case $(iii)$ we obtain a three-dimensional $X$-face and thus a contradiction to $\QQ$-factoriality as well.
For the supplement let $X$ be a full intrinsic quadric and assume $\varrho(X)>1$. 
Due to Proposition \ref{proposition:quadricsPicBound1} we have $\varrho(X)\leq 3$. Thus renumbering the columns of $P$ we are left with the following situations:
\begin{enumerate}
    \item $n_0=n_1=2>n_2=n_3=1$
    \item $n_0=n_1=n_2=2>n_3=1$
\end{enumerate}
In Case $(i)$ we have $\varrho(X)=2$ and in Case $(ii)$ we have $\varrho(X)=3$. Using the same argument as before we exclude Case $(i)$ using Lemma \ref{lem:quadricsTool} (i) and Case $(ii)$ using Lemma \ref{lem:quadricsTool} (ii).
\end{proof}

\begin{remark}\label{remark:quadricsConstellations}
Let $X:=X(A,P,\Sigma)\subseteq Z$ be a $\QQ$-factorial Fano intrinsic quadric of dimension three and
torus action of complexity two.
Then the dimension of the total coordinate space and the Picard number are given as: $$\dim(\overline{X}) = n+m-1,\quad \varrho(X)=n+m-4, \quad 4\leq n\leq 8,\quad 0\leq m\leq 2.$$
\end{remark}

\begin{remark}\label{rem:quadricsLinPartBound}
Let $X:=X(A,P,\Sigma)\subseteq Z$ be a Fano explicit general arrangement variety with anticanonical complex $\mathcal{A}$. Then every convex combination of vertices of $\mathcal{A}$ that lie inside the tropical variety $\trop(X)\subseteq\QQ^{r+s}$ is contained in $\AAA$. 
In particular, if $X$ is of dimension three, has a torus action of complexity two and at most canonical singularities, then for any such point $v=(v_1,\ldots,v_{r+1})$ that lies inside the lineality space of the tropical variety $\trop(X)$, we have $-1\leq v_{r+1}\leq 1$.
\end{remark}

Let $X$ be a Mori dream space.
In order to detect a maximal torus action
on $X$ we will make use of the procedure of {\em lifting} automorphisms. 
Assume there is a torus action
$\TT\times X\rightarrow X$. Then due to \cite[Thm. 4.2.3.2]{ArDeHaLa2015} there is a lifted action $\TT\times\hat{X}\rightarrow\hat{X}$ with 
$$
t\cdot (h\cdot \hat x) = h\cdot (t\cdot \hat x)\qquad \text{ for all } t\in\TT,
\
h\in H_X, 
\
\hat x\in\hat X.
$$
Thus $\TT$ as well as the product $\TT\times H_X$ act on $\hat{X}$ and therefore on $\bar{X}$ as $\hat{X}\subseteq\bar{X}$ is of codimension two.
We will identify both groups with the corresponding subgroups of translations inside the automorphism group $\mathrm{Aut}(\bar X)$.

We will show that in our situation this action is diagonal in the following sense:
Let $X\subseteq \CC^n$ be an affine variety endowed with an effective quasitorus action ${H\times X\rightarrow X}$. 
We say that $H$ \emph{acts diagonally} on $X$ if
there are characters ${\chi^{w_1},\ldots,\chi^{w_n}\in\Chi(H)}$ 
such that $h\cdot(x_1,\ldots,x_n) = (\chi^{w_1}(h)x_1,\ldots,\chi^{w_n}(h)x_n)$ holds for all $h\in H$ and $(x_1,\ldots,x_n)\in X$. Note that this is equivalent to
homogeneity of the coordinate functions $T_i\in \OOO(X)$, where we endow $\OOO(X)$ with the grading 
corresponding to the action of $H$ on $X$.

\begin{lemma}\label{lem:auto}
Let $X$ be a Mori dream space with torus action $\TT\times X\rightarrow X$ and Cox ring $\RRR(X)=\CC[T_1,\ldots,T_r]/\bangle{g_1,\ldots,g_s}$. If all homogeneous components $\RRR(X)_{w_i}$ with $w_i=\deg(T_i)$ are one-dimensional then $\TT\times H_X\subseteq \Aut(\bar{X})$ acts diagonally.
\end{lemma}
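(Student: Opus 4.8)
The plan is to use that $G := \TT \times H_X$ is a diagonalizable group acting algebraically on $\bar X$, so that $\OOO(\bar X) = \RRR(X)$ decomposes into $G$-weight spaces, and then to read off from the hypothesis that each generator $T_i$ lies in a single such weight space.

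First I would set up the grading picture. By definition of the Cox ring, the $H_X$-action on $\bar X$ induces the $\Cl(X) = K$-grading on $\RRR(X)$. The relation $t\cdot(h\cdot\hat x) = h\cdot(t\cdot\hat x)$ recalled above says that the lifted $\TT$-action commutes with $H_X$, so $\TT$ and $H_X$ together give an algebraic action of the quasitorus $G = \TT\times H_X$ on $\bar X$ (via $\hat X\subseteq\bar X$ of codimension two). Hence $\RRR(X)$ carries a grading by $\Chi(G) = \Chi(\TT)\oplus\Chi(H_X) = \Chi(\TT)\oplus K$. I would then observe that this $G$-grading refines the $K$-grading: restricting a $G$-weight $(\chi,w)$ to the subgroup $\{1\}\times H_X$ forgets $\chi$, so for every $w\in K$ one gets $\RRR(X)_w = \bigoplus_{\chi\in\Chi(\TT)}\RRR(X)_{(\chi,w)}$.

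Then I would apply the hypothesis. Fix a generator $T_i$ with $\deg_K(T_i) = w_i$; if $T_i = 0$ in $\RRR(X)$ the corresponding coordinate is identically zero and there is nothing to show, so assume $T_i\neq 0$. Then $\CC\,T_i\subseteq\RRR(X)_{w_i}$, and since $\dim_\CC\RRR(X)_{w_i} = 1$ by assumption we get $\RRR(X)_{w_i} = \CC\,T_i$. Decomposing this one-dimensional space along the refinement from the previous step forces exactly one summand $\RRR(X)_{(\chi_i,w_i)}$ to be nonzero, and it equals $\CC\,T_i$; thus $T_i$ is $G$-homogeneous. As this holds for each generator, the coordinate functions of the embedding $\bar X\subseteq\CC^r$ are all $G$-homogeneous, which, by the characterisation of diagonal actions recalled just before the lemma, means exactly that $G = \TT\times H_X$ acts diagonally on $\bar X$.

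The only step requiring genuine care --- and the one I would expect to be the crux rather than bookkeeping --- is justifying that the $G$-grading really refines the Cox ring grading, i.e.\ that the lifted $\TT$-action maps each $K$-homogeneous component $\RRR(X)_w$ into itself; this is precisely what the commutation identity $t\cdot(h\cdot\hat x) = h\cdot(t\cdot\hat x)$ buys us once transported from $\hat X$ to $\bar X$. Everything else is the standard weight-space decomposition of a rational representation of a quasitorus.
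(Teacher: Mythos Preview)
Your proof is correct and follows essentially the same approach as the paper's: both observe that the $(\TT\times H_X)$-grading on $\RRR(X)$ refines the $K$-grading, so that one-dimensionality of $\RRR(X)_{w_i}$ forces each generator $T_i$ to be homogeneous for the finer grading. Your write-up is more detailed, but the argument is the same.
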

\begin{proof}
Consider the grading on $\mathcal{R}(X)$ 
defined by the action $\TT \times H_X$ on $\bar X$.
Then the $K_X$-grading is a coarsening of this grading and thus, as the homogeneous components $\mathcal{R}(X)_{w_i}$ are one-dimensional, they are homogeneous components in this refined grading as well. Thus the assertion follows.
\end{proof}

\begin{proof}[Proof of Theorem  \ref{thm:quadrics}]
According to Proposition \ref{prop:quadricsPicardNumber} we have $\varrho(X)\leq 3$. 
We first go through the cases sorted by the Picard number and then prove that none of the varieties in the list are isomorphic.
Finally, we determine the dimension of the maximal torus in their automorphism groups. 

\vspace{2mm}\noindent
{\em Case (I) ($\varrho(X)=1$):}
Due to Remark \ref{remark:quadricsConstellations} we are left with the following configurations:
\begin{enumerate}
    \item[(a)] $n=4$ and $m=1$
    \item[(b)] $n=5$ and $m=0$
\end{enumerate}

\vspace{2pt}
\noindent
{\em Case (I)(a):}
As for $\varrho(X)=1$ any $\bar{X}$-face is an $X$-face, we obtain a big cone $\sigma=\cone(v_{01},v_{11},v_{21},v_{31})\in\Sigma$. After applying suitable row operations on the matrix $P$, we may assume
$$
    P=\left[\begin{array}{ccccc}
    -2&2&0&0&0\\
    -2&0&2&0&0\\
    -2&0&0&2&0\\
    x&1&1&1&-1
    \end{array}\right], 
    \qquad
    v_\sigma'=\left[0,0,0,\frac{x+3}{2}\right].
$$
As $X$ has at most canonical singularities, we conclude $0< (x+3)/2\leq 1$ and thus $x= -1$ as the 
columns of $P$ are primitive. The resulting variety $X(A,P,\Sigma)$ is canonical and appears as No. 1 in our list.

\vspace{2pt}
\noindent
{\em Case (I)(b):} After suitably renumbering we may assume $n_0=2$
and with $\varrho(X)=1$ we obtain the following two big cones in $\Sigma$:
$$\sigma_j:=\cone(v_{0j},v_{11},v_{21},v_{31}), \quad \text{ where } 1\leq j\leq 2.$$
Moreover, after applying suitable row operations, the matrix $P$ and the vectors $v_{\sigma_1}'$ and $v_{\sigma_2}'$ are of the following form:
$$
    P=\left[\begin{array}{ccccc}
    -1&-1&2&0&0\\
    -1&-1&0&2&0\\
    -1&-1&0&0&2\\
    x&y&1&1&1
    \end{array}\right], 
    \qquad
    \begin{array}{ccc}
    v_{\sigma_1}'&=&\left[0,0,0,\frac{2x+3}{3}\right],
    \\
    \\
    v_{\sigma_2}'&=&\left[0,0,0,\frac{2y+3}{3}\right],
    \end{array}
$$
where $x<y$ holds. As $X$ has at most canonical singularities, we conclude $0< (2x+3)/3\leq 1$ and $-1\leq (2y+3)/3< 0$.  This implies $x\in\{-1,0\}$ and $y\in\{-3,-2\}$.
In this situation all of the possible varieties $X(A,P,\Sigma)$ 
are canonical. 
Note that for $x=-1$, $y=-3$ and $x= 0$, $y=-2$ the resulting rings $R(A,P)$ are isomorphic. All in all this gives the varieties Nos.\  2 to 4 in our list.

\vspace{2pt}
\noindent
{\em Case (II) ($\varrho(X)=2$):}
Due to Remark \ref{remark:quadricsConstellations} we are in one of the following situations:
\begin{enumerate}
    \item[(a)] $n=4$ and $m=2$
    \item[(b)] $n=5$ and $m=1$
    \item[(c)] $n=6$ and $m=0$
\end{enumerate}

\vspace{2pt}
\noindent
{\em Case (II)(a):}
After applying suitable row operations, we arrive at
$$
    P=\left[\begin{array}{cccccc}
    -2&2&0&0&0&0\\
    -2&0&2&0&0&0\\
    -2&0&0&2&0&0\\
    x&1&1&1&-1&1
    \end{array}\right].
$$
We obtain a point
$$(0,0,0,\frac{x+3}{4}) = \frac{1}{4}(v_{01} + v_{11}+ v_{21}+v_{31})\in\conv(v_{01},v_{11},v_{21},v_{31})\cap|\trop(X)|.$$
Remark \ref{rem:quadricsLinPartBound} implies $-1\leq 1/4(x+3)\leq 1$ and thus
$-7\leq x \leq 1$. 
Assume $x$ is even. Then the first column of $P$ is not primitive; a contradiction.
For $x\in\{-7,-5,-1,1\}$ the resulting varieties are not Fano. 
Thus, the only possible case left is $x=-3$. The resulting variety is a canonical Fano variety and appears as No. 5 in our list.

\vspace{2pt}
\noindent
{\em Case (II)(b):}
We may assume $n_0$=2 and
after applying suitable row operations we arrive at
$$
    P=\left[\begin{array}{cccccc}
    -1&-1&2&0&0&0\\
    -1&-1&0&2&0&0\\
    -1&-1&0&0&2&0\\
    x&y&1&1&1&-1
    \end{array}\right],
$$
where we may assume $x\leq y$. Note that due to completeness of $X$, we have $|\trop(X)|\subseteq|\Sigma|$. Therefore, we obtain a big cone $\sigma$ containing $[0,0,0,1]$ and a vertex $v_\sigma'$ of $\AAA$
$$\sigma=\cone(v_{02},v_{11},v_{21},v_{31}),\qquad v_\sigma'=[0,0,0,1+\frac{2}{3}y].$$
Due to canonicity of $X$ we conclude $0\leq 1+ (2/3)y\leq 1$
and thus $-1\leq y\leq 0$.
Now consider the point 
$$[0,0,0,\frac{2x+3}{5}] = \frac{1}{5}(2 v_{01} + v_{11}+ v_{21}+v_{31})\in\conv(v_{01},v_{11},v_{21},v_{31})\cap|\trop(X)|.$$
Using Remark \ref{rem:quadricsLinPartBound} we obtain 
$-1\leq 1/5(2x+3)\leq 1$ and thus $-4\leq x\leq 1$. 
Computing the anticanonical complex in these cases gives Nos.\  6 to 8 in our list.

\vspace{2pt}
\noindent
{\em Case (II)(c):}
In this case $X$ is a full intrinsic quadric and therefore Proposition \ref{prop:quadricsPicardNumber} implies $\varrho(X)=1$, a contradiction to $\varrho(X)=2$.

\vspace{2pt}
\noindent
{\em Case (III) ($\varrho(X)=3$):}
Due to Remark \ref{remark:quadricsConstellations} and Lemma \ref{lem:quadricsTool} $(i)$ we may assume $2=n_0>n_1=\ldots=n_3=1$ and $m=2$.
After applying suitable row operations we arrive at
$$
    P=\left[\begin{array}{ccccccc}
    -1&-1&2&0&0&0&0\\
    -1&-1&0&2&0&0&0\\
    -1&-1&0&0&2&0&0\\
    x&y&1&1&1&-1&1
    \end{array}\right]
$$
where we may assume $x\leq y$.
Consider the point 
$$[0,0,0,\frac{2x+3}{5}] = \frac{1}{5}(2 v_{01} + v_{11}+ v_{21}+v_{31})\in\conv(v_{01},v_{11},v_{21},v_{31})\cap|\trop(X)|.$$
Using Remark \ref{rem:quadricsLinPartBound} we obtain 
$-1\leq 1/5(2x+3)\leq 1$ and thus $-4\leq x\leq 1$. 
Replacing $v_{01}$ with $v_{02}$ we obtain $-4\leq y\leq 1$ as well.
Computing the anticanonical complex in these cases gives No. 9 in our list.

We proceed by proving that the varieties defined by the data in our list are pairwise non-isomorphic.

Due to the divisor class group the only possible combinations to compare are Nos.\  2 and 3 and Nos.\  6, 7 and 8.
The Fano index of a Fano explicit general arrangement variety $X=X(A,P,\Sigma)$
is the largest integer $q(X)$ such that $-\KKK_X=q(X) w$ holds with some $w\in\Cl(X)$.
If $X$ is isomorphic to another general arrangement variety $X'$ then their Fano indices coincide. 
Denote by $X_i$ the explicit general arrangement variety defined by the $i$-th datum in our list. Then the varieties $X_2$ and $X_3$, $X_6$ and $X_8$ and $X_7$ and $X_8$ are not isomorphic due to the following table:
\begin{center}
\begin{tabular}{c|c|c|c|c|c}
$X_i$&$X_2$&$X_3$&$X_6$&$X_7$&$X_8$
\\\hline
$q(X_i)$&$3$&$6$&$1$&$1$&$2$
\end{tabular}
\end{center}
Thus, we are left with comparing Nos.\  6 and 7.
The effective cone of an explicit general arrangement variety $X=X(A,P,\Sigma)$ is the cone 
$$\Eff(X)=\cone(w_{ij},w_k,\ 0\leq i\leq r, 1\leq j\leq n_i, 1\leq k\leq m).$$
If $X$ is isomorphic to another general arrangement variety $X'$ then there is a lattice isomorphism mapping the extremal primitive ray generators of $\Eff(X)$ onto that of $\Eff(X')$.
Considering the varieties $X_6$ and $X_7$ their effective cones are $\Eff(X_6) = \cone([1,-1],[0,1])$ and $\Eff(X_7)=\cone([-1,2],[1,0])$. Thus the varieties are not isomorphic 
as $\Eff(X_6)$ is a smooth cone whereas $\Eff(X_7)$ is not.

We finish the proof by showing that the varieties in our list are of true complexity two. 
We treat all varieties except 
the one encoded by the 2nd datum 
of our list at once. 
Let $X$ be any of these varieties 
and assume $\TT\times X\rightarrow X$ is a maximal torus action on $X$.
As all the homogeneous components $\RRR(X)_{w_i}$ are one-dimensional Lemma \ref{lem:auto} applies and we conclude that in these cases the generators $T_i$ resp. $S_k$ are homogeneous with respect to the grading defined via the $(\TT \times H_X)$-action on $\bar{X}$. As 
the ideal defining $\bar{X}$ is principle, the relation is homogeneous as well and thus $\TT\times H_X$ acts as a sub-quasitorus of the maximal quasitorus defined via the maximal diagonal grading, see \cite[Constr. 3.2.4.2]{ArDeHaLa2015}.
Modding out the $H_X$-action yields that $\TT$ is indeed one-dimensional. 

Now, let $X$ be the variety encoded by the 2nd datum in our list. Here, the homogeneous components $\mathcal{R}(X)_{w_i}$ are one-dimensional for $i \geq 3$. In particular, the 
variables $T_3, T_4$ and $T_5$ are homogeneous with respect to the $(\ZZ^t \times K_X)$-grading defined via the 
$(\TT \times H_X)$-action on $\bar X$.
Considering the $2$-dimensional graded component $\mathcal{R}(X)_{w_1} = \mathcal{R}(X)_{w_2}$
one concludes that there exists a
$(\ZZ^t \times K_X)$-homogeneous set of generators of the form
$
T_1 + \lambda T_2, \ \mu T_1 + T_2, \ T_3, \ T_4$ and $T_5 \in \mathcal{R}(X),$
where $\lambda, \mu \in \CC$. 
We obtain a graded isomorphism between $\mathcal{R}(X)$ and 
$$R:=\CC[f_1, f_2, f_3, f_4, f_5]/\bangle{\mu f_1^2 - (\lambda \mu +1)f_1f_2 + \lambda f_2^2 - (\lambda\mu - 1)^2(T_3^2 + T_4^2 + T_5^2)},
$$
where the variables and the relation of the latter ring are even $(\ZZ^t \times K_X)$-homogeneous. We conclude that
the $(\ZZ^t \times K_X)$-grading on $R$ is a coarsening of its maximal diagonal grading. Modding out $K_X$, we conclude $t \leq 1$ and $\TT$ is indeed one-dimensional.
\end{proof}

\begin{proposition}\label{prop:quadricsTerminal}
Let $X=X(A,P,\Sigma)\subseteq Z$ be a $\QQ$-factorial Fano intrinsic quadric of complexity $c=\dim(X)-1$. Then $X$ is not terminal.
\end{proposition}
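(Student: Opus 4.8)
The plan is to use the piecewise convexity of the anticanonical complex of a Fano variety (Corollary~\ref{cor:fanoConvex}) to produce a lattice point of $\mathcal{A}$ that is neither the origin nor a primitive generator of a ray of $\Sigma$; by Remark~\ref{rem:charAKK}~(iii') (equivalently Theorem~\ref{introthm1}~(iii)) this forces $X$ to be non-terminal. First I would record the structure imposed by the hypotheses. As the complexity equals $\dim(X)-1$ we have $s=1$, so there is a single quadratic relation, $r=c+1$, the lineality space of $\trop(X)$ is the line $\QQ\cdot e_{r+s}$, and $\trop(X)$ carries the quasifan structure $\Sigma_{\PP_r}^{\le c}\times\QQ^s$. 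Writing $q$ for the number of "pair blocks" (those $i$ with $n_i=2$, necessarily $l_i=(1,1)$), the remaining $c+2-q$ blocks are "double blocks" with $n_i=1$ and $l_{i1}=2$. Since $\varrho(X)=q+m$, Proposition~\ref{proposition:quadricsPicBound1} gives $q\le 3$. Primitivity of the columns of $P$ (Construction~\ref{constr:R(A,P)}) forces every $S$-column to be $\pm e_{r+s}$ (re-proving $m\le2$), a double-block column to be $v_{i1}=2e_i+d_{i1}e_{r+s}$ for $1\le i\le r$ (resp. $-2(e_1+\dots+e_r)+d_{01}e_{r+s}$) with $d_{i1}$ odd, and a pair-block column to be $e_i+d_{ij}e_{r+s}$ (resp. $-(e_1+\dots+e_r)+d_{0j}e_{r+s}$).

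The core case is $c\ge 3$. Since $q\le 3$, among the blocks $1,\dots,r$ there are at least two double blocks, say $i<j$. The columns $v_{i1}=2e_i+d_{i1}e_{r+s}$ and $v_{j1}=2e_j+d_{j1}e_{r+s}$ are primitive ray generators of $\Sigma$, hence vertices of $\mathcal{A}$ by Corollary~\ref{cor:AOfGArr}. I would then look at their midpoint $m_{ij}:=\tfrac12(v_{i1}+v_{j1})=e_i+e_j+\tfrac{d_{i1}+d_{j1}}{2}e_{r+s}$. Because $d_{i1},d_{j1}$ are odd we have $m_{ij}\in\ZZ^{r+s}$, and its image under the quotient $P_1$ is $e_i+e_j\in\cone(e_i,e_j)$, which lies in $\Sigma_{\PP_r}^{\le c}$ since $c=r-1\ge 2$; thus $m_{ij}\in|\trop(X)|$. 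As $X$ is Fano, $\mathcal{A}$ is piecewise convex, so $m_{ij}\in\conv(|\mathcal{A}|)\cap|\trop(X)|=|\mathcal{A}|$. But $m_{ij}\ne 0$, and $m_{ij}$ is not a column of $P$: it has two distinct positive "block" coordinates, whereas every column of $P$ is supported in a single coordinate $e_k$ with $1\le k\le r$, or has all coordinates $e_1,\dots,e_r$ negative, or equals $\pm e_{r+s}$. Hence $m_{ij}$ is a lattice point of $\mathcal{A}$ distinct from the origin and from the primitive ray generators of $\Sigma$, and $X$ is not terminal.

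It remains to handle $c\in\{1,2\}$, where the midpoint construction can fail (one needs $c\ge 2$ for $m_{ij}\in\trop(X)$, and a second double block among $1,\dots,r$, which breaks down for $c=2,\ q\in\{2,3\}$). For $c=2$, i.e. $\dim(X)=3$, a terminal—hence canonical—such $X$ would appear in the classification of Theorem~\ref{thm:quadrics}; since every entry there has $q\le 1$, the midpoint argument for $c=2,\ q\le 1$ together with this observation closes the case (alternatively, Lemma~\ref{lem:quadricsTool} combined with $\QQ$-factoriality rules out $q\ge 2$ here directly). For $c=1$, i.e. $\dim(X)=2$, a terminal variety is smooth, and one runs through the finitely many $\QQ$-factorial Fano fans $\Sigma$ allowed by $q\le 3$, $m\le 2$—the remaining freedom being the bottom row $D$ of $P$, constrained by primitivity and ampleness of $-\mathcal{K}_X$—and checks via Corollary~\ref{cor:AOfGArr} that none is smooth. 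I expect this last, least conceptual, step to be the main obstacle: making the low-dimensional bookkeeping systematic and verifying that the exceptional cases $c=2,\ q\ge 2$ and $c=1$ are genuinely covered.
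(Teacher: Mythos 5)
Your proposal is correct and follows essentially the same route as the paper: bound $n^{(2)}\le 3$ via Proposition~\ref{proposition:quadricsPicBound1}, deduce for $\dim(X)\ge 4$ the existence of two columns with exponent $2$, and observe that their midpoint is a lattice point of $\mathcal{A}$ (by convexity over the tropical variety, cf. Remark~\ref{rem:quadricsLinPartBound}) that is neither $0$ nor a ray generator, while dimensions $3$ and $2$ are handled exactly as in the paper via Theorem~\ref{thm:quadrics} and terminal-implies-smooth plus the del Pezzo classification. The only cosmetic point is that placing both double blocks among $1,\dots,r$ requires a harmless renumbering of the blocks, which the paper performs implicitly with its ``after reordering'' step.
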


\begin{proof}
Denote by $n^{(1)}$ resp. $n^{(2)}$ the number of terms of $g$ with $n_i = 1$ resp. $n_i=2$, where $0\leq i\leq r$.
Then,
as $X$ is $\QQ$-factorial and of complexity $c=\dim(X)-1$, the dimension and the Picard number of $X$ are given
as
$$
\dim(X) = c + 1 = n^{(1)} + n^{(2)} -1,
\qquad
\varrho(X) = n + m - r - 1 =  n^{(2)} + m.
$$
In particular, using 
Proposition \ref{proposition:quadricsPicBound1} we conclude
$\dim(X) = \varrho(X) - m + n^{(1)} -1 \leq n^{(1)} +2$.
In case that $X$ is of dimension two, terminality means smoothness and the assertion follows due to the classification of smooth Del Pezzo surfaces, see \cite{De1980, Ma1966}. In case that $X$ is of dimension $3$, Theorem \ref{thm:quadrics} shows that
there exist no terminal varieties.
Assume $\dim(X) \geq 4$. Then 
$n^{(1)}\geq 2$ holds 
and after reordering and applying 
admissible row operations
we may assume that $P$ contains the following two columns:
$$v_{(r-1)1} = (0, \ldots, 0, 2,0,1),
\qquad
v_{r1} = (0, \ldots, 0,2,1).
$$
As $X$ is complete and $c \geq 2$ holds we have
$\mathrm{cone}(v_{(r-1)1},v_{r1})\in\Sigma$ 
and Remark \ref{rem:quadricsLinPartBound} implies
$$
(0,\ldots, 0,1,1,1) \in \mathrm{conv}(v_{(r-1)1},v_{r1}) \subseteq \mathcal{A}.
$$
In particular, there is a lattice
point in $\mathcal{A}$ that is not a primitive ray generator of $\Sigma$
and therefore $X$ can not be terminal.
\end{proof}

{\small
\bibliography{references}{}
\bibliographystyle{plain}
}
\end{document}